\def\filled{\path[fill=black, fill opacity=0.15, even odd rule]}
\def\stroked{\draw[line cap=round, line join=miter, ultra thick]}
\def\clearstroked{\draw[line cap=butt, line width=3,white]}
\newcommand{\edge}{%
  \begin{tikzpicture}
    \draw[thick] (0,0) -- (0.5,0);
    \filldraw [black] (0,0) circle (2pt);
    \filldraw [black] (0.5,0) circle (2pt);
  \end{tikzpicture}
    }
\newcommand{\singleton}{%
  \begin{tikzpicture}
    \filldraw [black] (0,0) circle (2pt);
  \end{tikzpicture}
    }
\newcommand{\round}{%
  \begin{tikzpicture}[baseline=(base)]
    \draw [ultra thick,black,fill=black,fill opacity=0.15] (0,0) circle (4pt);
    \node (base) at (0,-.6ex) {};
  \end{tikzpicture}
    }
\newcommand{\onecrossing}{%
  \begin{tikzpicture}[baseline=(base)]
    \coordinate (ll) at (0,0);
    \coordinate (lt) at (0.12,0.14);
    \coordinate (lb) at (0.12,-0.14);
    \coordinate (rt) at (0.48,0.14);
    \coordinate (rb) at (0.48,-0.14);
    \coordinate (rr) at (0.6,0);
    \stroked (ll) .. controls +(0,-0.1) and +(-0.05,0) .. (lb) .. controls +(0.2,0) and +(-0.2,0) .. (rt) .. controls +(0.05,0) and +(0,0.1) .. (rr);
    \clearstroked (ll) .. controls +(0,0.1) and +(-0.05,0) .. (lt) .. controls +(0.2,0) and +(-0.2,0) .. (rb) .. controls +(0.05,0) and +(0,-0.1) .. (rr);
    \stroked (ll) .. controls +(0,0.1) and +(-0.05,0) .. (lt) .. controls +(0.2,0) and +(-0.2,0) .. (rb) .. controls +(0.05,0) and +(0,-0.1) .. (rr);
    \filled (ll) .. controls +(0,-0.1) and +(-0.05,0) .. (lb) .. controls +(0.2,0) and +(-0.2,0) .. (rt) .. controls +(0.05,0) and +(0,0.1) .. (rr) .. controls +(0,-0.1) and +(0.05,0) .. (rb) .. controls +(-0.2,0) and +(0.2,0) .. (lt) .. controls +(-0.05,0) and +(0,0.1) .. (ll);
    \node (base) at (0,-.6ex) {};
  \end{tikzpicture}
    }
\newtheorem{thm}{Theorem}[section]
\newtheorem{conj}[thm]{Conjecture}
\newtheorem{cor}[thm]{Corollary}
\newtheorem{lem}[thm]{Lemma}
\newtheorem{prop}[thm]{Proposition}
\newtheorem{ques}[thm]{Question}
\newtheorem{example}[thm]{Example}
\newtheorem{maintheorem}{Theorem}
\newtheorem{mainconjecture}[maintheorem]{Conjecture}
\newtheorem{maincor}[maintheorem]{Corollary}
\def\cG{{\mathcal G}}
\def\C{{\mathbb C}}
\def\P{{\mathbb P}}
\def\cD{{\mathcal D}}
\def\bQ{{\mathbb Q}}
\def\bR{{\mathbb R}}
\def\cT{{\mathcal T}}
\def\cX{{\mathcal X}}
\def\bZ{{\mathbb Z}}
\def\qq{{\mathbb Q}}
\def\rr{{\mathbb R}}
\def\zz{{\mathbb Z}}
\def\Char{{\mathrm{Char}}}
\def\Cut{{\mathrm{Cut}}}
\def\Flow{{\mathrm{Flow}}}
\def\Hom{{\mathrm{Hom}}}
\def\spc{{\mathrm{spin^c}}}
\def\Spc{{\mathrm{Spin^c}}}
\def\Short{{\mathrm{Short}}}
\def\cut{{\textup{Cut}}}
\def\del{{\partial}}
\def\ud{{\underline{\det}}}
\def\im{{\text{im}}}
\def\rk{{\mathrm{rk}}}
\def\supp{{\textup{supp}}}
\newcommand{\into}{\hookrightarrow}
\newcommand{\longto}{\longrightarrow}
\newcommand{\hajos}{Haj\'{o}s}
\newcommand{\ozsvath}{Ozsv\'{a}th}
\newcommand{\szabo}{Szab\'{o}}
\newcommand{\spin}{\ifmmode{\mathrm{Spin}}\else{$\mathrm{spin}$\ }\fi}
\newcommand{\spinc}{\ifmmode{\mathrm{Spin}^c}\else{$\mathrm{spin}^c$\ }\fi}
\newcommand{\spinct}{\mathfrak t}
\newcommand{\spincs}{\mathfrak s}
\newcommand{\tors}{{\mathrm{Tors}}}
\newcommand{\flow}{\mathrm{Flow}}
\DeclareMathOperator\Id{Id}
\begin{document}

\title{Alternating links, rational balls, and cube tilings}
\author[Joshua Evan Greene]{Joshua Evan Greene}
\address{Department of Mathematics \newline\indent 
Boston College \newline\indent
USA }
\email{joshua.greene@bc.edu}
\author[Brendan Owens]{Brendan Owens}
\address{School of Mathematics and Statistics \newline\indent 
University of Glasgow \newline\indent 
Glasgow, G12 8QQ, United Kingdom}
\email{brendan.owens@glasgow.ac.uk}
\date{\today}
\thanks{JEG was supported on a Simons Fellowship and NSF grant DMS-2005619}

\begin{abstract}
When does the double cover of the three-sphere branched along an alternating link bound a rational homology ball?
Heegaard Floer homology generates a necessary condition for it to bound: the link's chessboard lattice must be cubiquitous, implying that its normalized determinant is less than or equal to one.
We conjecture that the converse holds and prove it when the normalized determinant equals one.
The proof involves flows on planar graphs and the Haj\'os-Minkowski theorem that a lattice tiling of Euclidean space by cubes contains a pair of cubes which touch along an entire facet.
We extend our main results to the study of ribbon cobordism and ribbon concordance.
\end{abstract}

\maketitle

\pagestyle{myheadings}
\markboth{JOSHUA EVAN GREENE AND BRENDAN OWENS}{ALTERNATING LINKS, RATIONAL BALLS, AND CUBE TILINGS}


\section{Introduction}
\label{sec:intro}

Casson raised the open-ended problem to determine which rational homology 3-spheres bound smooth rational homology 4-balls \cite[Problem 4.5]{kirby}.
Lisca solved the problem for lens spaces in an influential paper \cite{lisca}.
His method is to take a putative rational homology ball filling a lens space and to glue it along its boundary to an appropriate linear plumbing of disk bundles over spheres.
The result is a smooth, closed, oriented, definite 4-manifold, so by Donaldson's theorem, its intersection pairing is isometric to the lattice of integer points in Euclidean space of some dimension.
As a consequence, the intersection pairing of the linear plumbing embeds with full rank into the Euclidean lattice.
Lisca showed the converse: whenever such lattice embeddings exist for the linear plumbings which fill both a lens space and its orientation reversal, the lens space does bound a rational homology ball.

Lisca's work established a surprising theme: a necessary lattice embedding obstruction from Donaldson's theorem, plus a twist, is sufficient; moreover, the examples which pass it fall to a recursive construction.
Both the solution of the lens space realization problem and McCoy's theorem on alternating knots with unknotting number one can be viewed in this light \cite{lens,mccoy}.

The motivation for this work is to extend Lisca's result to double covers of the three-sphere branched along an alternating link.
The pertinent lattice embedding obstruction here is {\em cubiquity}, which appeared earlier, if somewhat obscurely, in \cite{GJ}.
In line with the theme, we conjecture that the obstruction is sufficient (\Cref{conj:main}), and we prove it under a natural numerical condition involving the normalized determinant (\Cref{thm:links2}).
We take a step further by establishing a generalization of cubiquity for ribbon rational homology cobordisms between branched double covers of alternating links (\Cref{thm:ribboncubiquity}).
Once more, we conjecture that the obstruction is sufficient (\Cref{conj:main2}), and we prove it under a condition on the normalized determinant (\Cref{thm:generalization2}).

\subsection{The main result.}
We work in the smooth category.
A link $L \subset S^3$ with $\det(L) \ne 0$ is {\em $\chi$-slice} if it bounds a properly embedded surface $F \subset D^4$ with $\chi(F) = 1$ and no closed components.
It is {\em $\chi$-ribbon} if it bounds a $\chi$-slice surface $F$ for which the radial distance function on $D^4$ restricts to a Morse function on $F$ without index-2 critical points; equivalently, there exists a cobordism in $S^3 \times [0,1]$ from $\bigcirc \times \{0\}$ to $L \times \{1\}$ built from 0- and 1-handles.
Let $\Sigma(L)$ denote the double cover of $S^3$ branched along $L$, and let $\Sigma(F)$ denote the double cover of $D^4$ branched along $F$.
If $L$ is $\chi$-slice, then $\Sigma(F)$ is a rational homology ball with boundary $\Sigma(L)$ \cite[Proposition 2.6]{DO}.
Hence for $\det(L) \ne 0$, we have the implications
\begin{center}
$L$ $\chi$-ribbon \quad $\implies$ \quad $L$ $\chi$-slice \quad $\implies$ \quad $\Sigma(L)$ rationally nullbordant.
\end{center}
Moreover, the set of links having any one of these three properties is closed under arbitrary connected sum.

An alternating link has determinant zero if and only if it is split.
Let $L$ be a non-split alternating link and $D$ an alternating diagram of $L$.
We give the regions of $D$ a chessboard coloring according to the convention that near each crossing the coloring appears as in 
\onecrossing.

The black Tait graph of $D$ is a planar graph $G(D)$ which has one vertex in each black region and one edge through each crossing of $D$ joining the vertices in the incident black regions.
Let $\Lambda(D)$ denote the lattice of integer-valued flows on $G(D)$.
The resolution of the Tait flyping conjecture \cite{MT}, or alternatively the main result of \cite{mut}, leads to the following result: if $D$ is reduced, and if $D'$ is another alternating diagram of $L$, then 
$\Lambda(D')$ is isometric to a stabilization $\Lambda(D) \oplus \bZ^n$ for some $n \ge 0$, where $\bZ^n$ denotes the lattice of integer points in Euclidean space $\bR^n$, and $n=0$ if and only if $D'$ is reduced.
Thus, we can unambiguously define $\Lambda(L)$ to be the isometry type of $\Lambda(D)$, where $D$ denotes a reduced alternating diagram of $L$.

A lattice is {\em cubiquitous} if it admits an embedding into some $\bZ^n$ in such a way that its image $\Lambda$ contains a point in each unit cube with integer vertices:
\[
\textup{cubiquity:} \quad \Lambda \cap (x + \{0,1\}^n) \ne \emptyset, \quad \forall x \in \bZ^n.
\]
The following result follows from \cite{GJ}:
\begin{maintheorem}
\label{thm:GJ1}
If $\Sigma(L)$ bounds a rational homology ball, then $\Lambda(L)$ is cubiquitous.
\end{maintheorem}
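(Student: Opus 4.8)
The plan is to bound $\Sigma(L)$ by the canonical sharp negative-definite four-manifold coming from an alternating diagram, glue on the putative rational homology ball, apply Donaldson's diagonalization theorem to the resulting closed manifold, and then read off cubiquity by an elementary argument with characteristic covectors. To set up the filling, let $D$ be a reduced alternating diagram of $L$, let $F\subset D^4$ be its black checkerboard surface pushed into the interior, and let $X_D\to D^4$ be the double cover branched along $F$. Standard properties of the Goeritz form, together with the planar-duality identification $\Flow(G(D))=\Cut(G(D)^{*})$, show that $X_D$ is negative definite with finite $H_1$, $\partial X_D=\Sigma(L)$, and intersection lattice isometric to $-\Lambda(L)$; put $n=\rk\Lambda(L)$. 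By the theorem of \ozsvath\ and \szabo\ on branched double covers of alternating links, $X_D$ is sharp; parametrizing $\Spc(\Sigma(L))$ by characteristic covectors of $\Lambda(L)$ modulo $2\Lambda(L)$, sharpness reads $d(\Sigma(L),[\xi])=\tfrac14\bigl(n-\min\{\|\xi'\|^2: \xi'\equiv\xi \bmod 2\Lambda(L),\ \xi'\text{ characteristic}\}\bigr)$.

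Now suppose $\Sigma(L)=\partial W$ with $W$ a rational homology ball. The closed four-manifold $Z=X_D\cup_{\Sigma(L)}\overline{W}$ is negative definite with finite $H_1$, so by Donaldson's diagonalization theorem its intersection lattice is $\langle-1\rangle^{\oplus n}$. Because $W$ is a rational homology ball, $b_2(Z)=n$ and the inclusion $X_D\hookrightarrow Z$ induces an isometric embedding $\Lambda(L)\hookrightarrow\bZ^n$ of finite index; since $\Lambda(L)\subseteq\bZ^n\subseteq\Lambda(L)^{*}$ and $\Lambda(L)^{*}/\Lambda(L)\cong H_1(\Sigma(L))$ has order $\det L$, the index is $\sqrt{\det L}$, and a diagram chase identifies $\bZ^n/\Lambda(L)$ with the metabolizer $M\le H^2(\Sigma(L))$ of those $\Spc$-structures that extend over $W$.

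Any characteristic covector $\xi\in\bZ^n$ of $\Lambda(L)$ reduces mod $\Lambda(L)$ to an element of $\bZ^n/\Lambda(L)=M$, so $[\xi]$ extends over $W$, whence $d(\Sigma(L),[\xi])=0$ by \ozsvath\ and \szabo; the sharpness formula then supplies a characteristic covector $\xi'\equiv\xi\bmod 2\Lambda(L)$ with $\|\xi'\|^2=n$, and since $\xi'-\xi\in 2\Lambda(L)\subseteq 2\bZ^n$ this $\xi'$ again lies in $\bZ^n$ and has all coordinates odd. To extract cubiquity, fix an arbitrary $v\in\bZ^n$ and apply the foregoing to the odd covector $\xi=\mathbf{1}-2v$, where $\mathbf{1}=(1,\dots,1)$: we obtain $\xi'=\mathbf{1}-2u$ with $u\in\bZ^n$, $u\equiv v\bmod\Lambda(L)$, and $n=\sum_i(1-2u_i)^2=n+4\sum_i u_i(u_i-1)$. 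Since every summand $u_i(u_i-1)$ is a nonnegative integer, they all vanish, so $u\in\{0,1\}^n$. Thus every coset of $\Lambda(L)$ in $\bZ^n$ meets $\{0,1\}^n$ — equivalently, the image of $\Lambda(L)$ meets every translate $x+\{0,1\}^n$ — so $\Lambda(L)$ is cubiquitous.

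The step I expect to be the main obstacle is the dictionary between the Floer-theoretic and lattice pictures in the middle: identifying $\bZ^n/\Lambda(L)$ precisely with the metabolizer of $\Spc$-structures bounding over $W$, confirming that those are exactly the classes carrying an integral characteristic representative, and handling the $2$-torsion subtleties that appear when $\det L$ is even. Once that is pinned down, the definiteness input (Donaldson), the sharpness of the canonical filling (\ozsvath\ and \szabo), and the vanishing of $d$-invariants on classes bounding over a rational homology ball combine with the trivial bound $u_i(u_i-1)\ge 0$ to conclude.
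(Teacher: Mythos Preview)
Your proof is correct and follows essentially the same route as the paper's (Theorem~6.1 and Corollary~6.2): glue the Gordon--Litherland branched double cover to the rational ball, apply Donaldson's theorem, and combine sharpness with $d=0$ to produce a short characteristic representative in every coset, then translate via $\xi\mapsto(\xi-\xi_0)/2$ to cubiquity. The dictionary step you flag is handled in the paper without invoking a metabolizer directly: sharpness already forces the restriction $\Spc(X)\to\Spc(Y)$ to surject, so the $\spc$ structures on $Y$ extending over $Z$ are identified with $\Char(\bZ^n)/2\Lambda_X$, and no separate $2$-torsion analysis is needed.
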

\noindent
We rederive \Cref{thm:GJ1} in \Cref{sec:cubiquity}.
We conjecture here that the converse holds:

\begin{mainconjecture}
\label{conj:main}
If $\Lambda(L)$ is cubiquitous, then $\Sigma(L)$ bounds a rational homology ball.
\end{mainconjecture}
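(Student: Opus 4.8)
The plan is to prove the converse by imitating the constructive half of Lisca's determination of which lens spaces bound rational homology balls, and of McCoy's theorem on alternating knots with unknotting number one: rather than using a lattice embedding to obstruct a filling, one extracts from a cubiquitous embedding of $\Lambda(L)$ enough combinatorial rigidity to build, by an explicit recursion starting from the diagram $D$, a rational homology ball bounded by $\Sigma(L)$. Because cubiquity forces the normalized determinant of $L$ to be at most one, the first step is to treat the boundary case of normalized determinant equal to one, where the embedding is tight and the \hajos--Minkowski theorem applies; the strictly-less-than-one case is the genuine difficulty and is discussed at the end.

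In the tight case, note first that cubiquity forces the target of the embedding to have dimension exactly the rank of $\Lambda(L)$ — project onto the orthogonal complement of its $\bR$-span to see that a larger target cannot cover distant cubes — so we may write $\Lambda(L) \hookrightarrow \bZ^g$ with $g = \rk \Lambda(L)$. Tightness then means that the cube $\{0,1\}^g$ is a complete set of coset representatives for $\bZ^g/\Lambda(L)$; equivalently, the translates of the cube $[0,2)^g$ by $\Lambda(L)$ tile $\bR^g$. I would feed this tiling into the \hajos--Minkowski theorem to obtain two cubes of the tiling meeting along a complete facet, equivalently a standard basis vector $e_i$ of $\bZ^g$ with $2e_i \in \Lambda(L)$ — a primitive class of square four. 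Transporting this through the dictionary relating the flow lattice, the black Tait graph $G(D)$, and the diagram $D$, it should single out a small, explicit subconfiguration of crossings of $D$ — a four-sided region or a clasp — playing the role of the ``special'' local feature that Lisca and McCoy exploit. Undoing that subconfiguration by a local modification of $D$ should then produce a non-split alternating link $L'$ whose flow lattice $\Lambda(L')$ is again tight and cubiquitous but of rank $g-1$, together with a rational homology cobordism from $\Sigma(L)$ to $\Sigma(L')$.

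Iterating the reduction produces a chain $L = L_0 \rightsquigarrow L_1 \rightsquigarrow \cdots \rightsquigarrow L_N$ terminating at the unknot, whose branched double cover $S^3$ bounds $D^4$; stacking the cobordisms onto $D^4$ then yields a rational homology ball bounded by $\Sigma(L)$, establishing the conjecture in the tight case — and, with a little more work, classifying these links up to the recursion. The points that need genuine care are: showing that the \hajos--Minkowski vector is always realized by an honest local move on the diagram rather than merely mirrored in the lattice; checking that the move keeps us among non-split alternating links and that the resulting cobordism is a rational homology cobordism; and verifying that the reduced lattice remains the flow lattice of an alternating link, so that the recursion may continue.

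The principal obstacle is the case of normalized determinant strictly less than one, which is the gap between \Cref{thm:GJ1} and its converse. There the lattice translates of $\{0,1\}^g$ only cover $\bZ^g$ with overlaps rather than partitioning it, so there is no cube tiling for the \hajos--Minkowski theorem to act on and no automatic short vector with which to seed the recursion. Closing the gap seems to require one of: a classification of cubiquitous flow lattices of planar graphs in this regime; a ``covering'' strengthening of \hajos's theorem adapted to flow lattices; or a direct construction of the rational ball from a cubiquitous embedding. In any case the construction must be matched precisely against the Heegaard Floer obstruction of \Cref{thm:GJ1} — the ``plus a twist'' in Lisca's pattern — which I expect to be the delicate final step and the reason the full statement remains a conjecture.
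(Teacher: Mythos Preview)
The statement is \Cref{conj:main}, which the paper explicitly leaves open; there is no proof in the paper to compare against. Your proposal is not a proof either, and you acknowledge this: you give a plan for the boundary case $\ud(L)=1$ and correctly isolate the case $\ud(L)<1$ as the genuine obstruction.

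For the tight case your outline is essentially the paper's proof of \Cref{thm:links2}: embed $\Lambda(L)$ as a 2-cube tiling lattice, apply \hajos--Minkowski to obtain a square-four irreducible element, recognize it as a 4-cycle in the black Tait graph, reduce, and iterate to the unknot. The paper makes precise the step you leave vague. The square-four class is always the class of a 4-cycle $C$ by \Cref{prop:flowirred}(c), but the substantive content is \Cref{prop:planarhajos} and \Cref{cor:IImut}: $C$ is necessarily a \emph{decomposing cycle}, so the diagram splits along it into two smaller diagrams $D_1$, $D_2$ each again in $\cD$ (\Cref{lem:decompose}), and the reduced link $L'$ is $L_1 \# L_2$. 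The cobordism is the branched double cover of the ribbon $\chi$-concordance in \Cref{fig:bandmove}. Your list of ``points that need genuine care'' --- that the \hajos\ vector is realized by an honest diagrammatic move, that the move preserves non-splitness and cubiquity, that the reduced lattice is again a flow lattice --- is precisely what \Cref{prop:planarhajos}, \Cref{cor:IImut}, and \Cref{lem:decompose} establish, via an analysis of how the \hajos\ basis interacts with nested cycles in a planar graph (\Cref{lem:pwsnest}, \Cref{lem:nested}). One correction of emphasis: the reduction does not produce a single smaller link but a pair, and the decomposition is into two factors rather than a single ``clasp removal''; this is what forces the recursive family $\cD$ rather than a linear chain.

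For $\ud(L)<1$ your diagnosis matches the paper's own in \Cref{sec:conj}: with overlaps in the cube covering there is no tiling for \hajos--Minkowski to act on, and the paper offers the notion of being \emph{expanded from the unknot} (\Cref{conj:exp}) as a candidate replacement for the recursive structure. So your proposal is not a proof of the conjecture --- nor does one exist in the paper --- but as a strategy for the $\ud=1$ case it coincides with the paper's, and it correctly names the missing ingredient in general.
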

\noindent
Moreover, we expect that there exists an algorithm to construct a rational ball filling $\Sigma(L)$ if $\Lambda(L)$ is cubiquitous.

Suppose that $\Lambda \subset \bZ^n$ is cubiquitous.
By \Cref{prop:cubiq}, the cube $\{0,1\}^n$ contains a point from each coset of $\Lambda$, leading to the inequality $[\bZ^n : \Lambda] \le 2^n$.
The first value is the square root of the lattice determinant $[\Lambda^* : \Lambda]$, and the second is 2 raised to the rank $\rk(\Lambda)$.
The inequality motivates a definition: for a lattice $\Lambda$, the {\em normalized determinant} is the value
\[
\ud(\Lambda) := \det(\Lambda) / 4^{\rk(\Lambda)}.
\]
Thus, $\ud(\Lambda) \le 1$ if $\Lambda$ is cubiquitous.
We define in turn the normalized determinants of a chessboard-colored alternating link diagram $D$ and an alternating link $L$ by setting
\[
\ud(D) := \ud(\Lambda(D)), \quad \ud(L) := \ud(\Lambda(L)).
\]
Thus, \Cref{thm:GJ1} implies that $\ud(L) \le 1$ if $\Sigma(L)$ bounds a rational homology ball.

Our main result, \Cref{thm:links2}, establishes \Cref{conj:main} under the assumption that $\ud(L) = 1$.
Observe that if $\Lambda$ is cubiquitous, then $\ud(\Lambda) = 1$ if and only if each unit cube in $\bZ^n$ contains a unique point of $\Lambda$.
Equivalently, centering a cube of side length 2 at each point of $\Lambda$, we obtain a lattice tiling of $\bR^n$ by cubes.
We call $\Lambda$ a {\em 2-cube tiling lattice} in this case.
See \Cref{fig:tiling}.
Lattice tilings by cubes are highly constrained.
Minkoswki conjectured in 1896 that in any lattice tiling by cubes, every cube must intersect some other cube in an entire codimension-1 face of each \cite{Mink}.
\hajos\ proved Minkowski's conjecture in 1942 \cite{Hajos}.
(The references \cite{SS,Zong} describe an interesting history and related results.)
\hajos's theorem implies that every 2-cube tiling lattice in $\bZ^n$ has, up to reordering the orthonormal basis of $\bZ^n$, a basis consisting of the columns of a lower triangular matrix $H$ with 2's on its diagonal and 0's and 1's below (\Cref{thm:tiling}).
Using \Cref{thm:tiling}, we determine when the lattice of flows on a planar graph is a 2-cube tiling lattice (\Cref{thm:2cubeflow}).
We show that such graphs admit a simple recursive construction.
Moreover, for each such graph, the corresponding alternating link $L$ is $\chi$-ribbon, so $\Sigma(L)$ bounds a rational ball.

\begin{figure}
\includegraphics[width=6in]{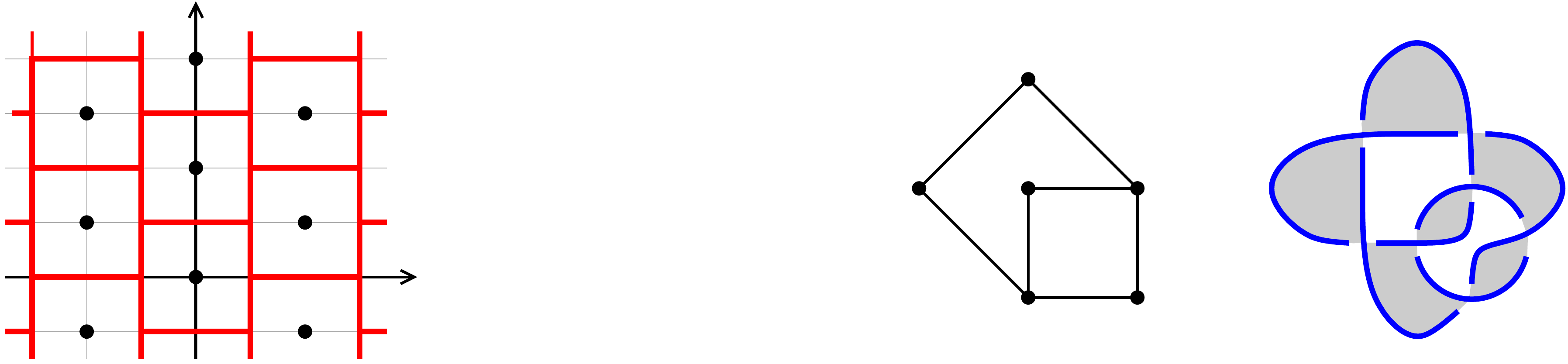}
\put(-315,20){$e_1$}
\put(-380,105){$e_2$}
\put(-346,43){$h_1$}
\put(-377,59){$h_2$}
\put(-295,45){
$H = \begin{blockarray}{ccc}
h_1 & h_2 &  \\
\begin{block}{[cc]c}
  2 & 0 & e_1 \\
  1 & 2 & e_2 \\
\end{block}
\end{blockarray}$
} 
\put(-160,55){$C_1$}
\put(-140,30){$C_2$}
\put(-180,80){$G$}
\put(-80,80){$D$}
\caption{
\label{fig:tiling}
(a) A 2-cube tiling lattice $\Lambda \subset \bR^2$. \\ (b) A \hajos\ matrix $H$ with $\Lambda_H = \Lambda$. \\ (c) A graph $G$ with $\flow(G) = \Lambda_H$ and cycles $C_i$ such that $[C_i] = h_i$. \\ (d) A chessboard-colored alternating diagram $D$ with black graph $G$ and $\Lambda(D) = \flow(G) = \Lambda_H = \Lambda$.}
\end{figure}

We now describe the construction and the main result in the language of link diagrams.
Define a composition of nontrivial chessboard-colored alternating diagrams $D_1$ and $D_2$, as shown in \Cref{fig:D}.
In words,
\begin{itemize}
\item
remove a crossing ball from each of $D_1$ and $D_2$;
\item
form a tangle sum of the resulting pair of tangles $\cT_1$ and $\cT_2$;
\item
introduce an unknotted component into the diagram along the circle where the tangle sum takes place; and
\item
add crossings and color consistently with $D_1$ and $D_2$ to obtain a chessboard-colored alternating diagram $D$.
\end{itemize}
Note that following the choice of crossing balls in the first step, there are two ways to make a tangle sum in the second step to ensure consistency in the final step.
Recursively define a set $\cD$ of chessboard-colored alternating diagrams by declaring that \round, \onecrossing $\in \cD$ and that $D \in \cD$ whenever $D$ is a composition of $D_1, D_2 \in \cD$.

\begin{maintheorem}
\label{thm:links2}
Let $D$ be a chessboard-colored alternating diagram of a link $L$ with $\ud(D) = \ud(L)=1$.
The following are equivalent:
\begin{enumerate}
\item
$L$ is $\chi$-ribbon;
\item
$L$ is $\chi$-slice;
\item
$\Sigma(L)$ is rationally nullbordant;
\item
the lattice $\Lambda(D)$ is a 2-cube tiling lattice; and
\item
$D$ is a connected sum of diagrams in $\cD$.
\end{enumerate}
Moreover, $D$ and $L$ are prime if and only if $D \in \cD$, and $D$ is reduced if and only if it is a connected sum of diagrams in $\cD \setminus \{\text{\onecrossing}\!\!\}$.
\end{maintheorem}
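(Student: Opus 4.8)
The plan is to prove the five-fold equivalence by running the cycle $(1)\Rightarrow(2)\Rightarrow(3)\Rightarrow(4)\Rightarrow(5)\Rightarrow(1)$, and then to extract the primality and reducedness assertions from the combinatorial correspondence underlying $(4)\Leftrightarrow(5)$. Since $L$ is non-split alternating, $\det(L)\ne 0$, so $(1)\Rightarrow(2)\Rightarrow(3)$ is nothing but the chain of implications recorded in \Cref{sec:intro}: the first is immediate from the definitions of $\chi$-ribbon and $\chi$-slice, and the second is \cite[Proposition 2.6]{DO}. The substantive steps are therefore $(3)\Rightarrow(4)$, $(4)\Rightarrow(5)$, and $(5)\Rightarrow(1)$.

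For $(3)\Rightarrow(4)$: if $\Sigma(L)$ is rationally nullbordant, then \Cref{thm:GJ1} gives that $\Lambda(L)$ is cubiquitous. Writing $\Lambda(D)\cong\Lambda(L)\oplus\bZ^m$ via the Tait flyping consequence recorded in \Cref{sec:intro}, one notes that cubiquity is preserved under this stabilization: a lattice point of $\Lambda(L)$ in a unit integer cube of $\bZ^k$ extends coordinatewise to one of $\Lambda(L)\oplus\bZ^m$ in a unit integer cube of $\bZ^{k+m}$. Hence $\Lambda(D)$ is cubiquitous, and since $\ud(D)=1$ a cubiquitous lattice of normalized determinant one is by definition a $2$-cube tiling lattice, which is $(4)$. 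This is the step that uses the hypothesis $\ud(D)=\ud(L)=1$ as an equality rather than an inequality.

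The implication $(4)\Rightarrow(5)$ is the combinatorial heart, and I expect it to be the main obstacle. Here $\Lambda(D)=\flow(G(D))$ for the black Tait graph. \Cref{thm:tiling} presents a $2$-cube tiling lattice, after reordering the orthonormal basis of $\bZ^n$, as $\Lambda_H$ for a lower triangular Haj\'os matrix $H$ with $2$'s on the diagonal and $0$'s and $1$'s below; in particular such a lattice contains no vector of norm $1$, so $G(D)$ is loopless. The plan is to feed this into \Cref{thm:2cubeflow} and its accompanying recursive description of planar graphs with $2$-cube tiling flow lattice, which should exhibit $G(D)$ as built from the one-vertex and single-edge graphs by the graph operations dual to the diagram composition of \Cref{fig:D}. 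The real work is to transport this recursion across Tait duality: each peeling of a Haj\'os row --- equivalently, of a cycle $C_i$ whose homology class is the corresponding basis vector $h_i$ --- should be realized geometrically by undoing one composition, namely deleting the last unknotted component introduced, without disturbing the alternating or chessboard structure; and a cut vertex of $G(D)$ (dually, a separating simple closed curve meeting $D$ in two points) should be matched with a connected-sum decomposition of $D$. Carrying this through should show $D$ to be a connected sum of members of $\cD$.

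For $(5)\Rightarrow(1)$, since the class of $\chi$-ribbon links is closed under connected sum (\Cref{sec:intro}), it suffices to show the link of each diagram in $\cD$ is $\chi$-ribbon, by induction on the recursive construction: the base cases \round\ and \onecrossing\ present the unknot, which bounds a disk in $D^4$, and for the inductive step one assembles a $\chi$-ribbon surface $F$ for the link of a composition of $D_1,D_2\in\cD$ from $\chi$-ribbon surfaces for the links of $D_1$ and $D_2$, a disk capping the newly introduced unknotted component, and bands realizing the tangle sum, arranged so that $\chi(F)=1+1+1-2=1$ and only $0$- and $1$-handles occur. This closes the cycle, so $(1)$--$(5)$ are equivalent. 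Finally, the ``moreover'' clause should follow by inspecting this correspondence: a connected sum of two diagrams each carrying a crossing is not prime, whereas every member of $\cD$ is prime because the unknotted component introduced in a composition links both pieces and so leaves $G(D)$ (and dually the white graph) without a cut vertex; combined with Menasco's theorem that a reduced alternating diagram is prime precisely when its link is, this gives that $D$ and $L$ are prime if and only if $D\in\cD$. Since the lone crossing of \onecrossing\ is nugatory and \onecrossing\ is the only non-reduced member of $\cD$, it follows that $D$ is reduced if and only if it is a connected sum of members of $\cD\setminus\{\text{\onecrossing}\}$.
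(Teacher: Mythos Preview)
Your proposal is correct and follows essentially the same route as the paper: the same cycle $(1)\Rightarrow(2)\Rightarrow(3)\Rightarrow(4)\Rightarrow(5)\Rightarrow(1)$, with $(4)\Rightarrow(5)$ carried by \Cref{thm:2cubeflow}/\Cref{cor:2cubediagram} and $(5)\Rightarrow(1)$ by an explicit ribbon cobordism. The only cosmetic differences are that for $(5)\Rightarrow(1)$ the paper packages the inductive step as a single $\chi=0$ ribbon cobordism from $L_1\#L_2$ to $L$ (one $0$-handle and one $1$-handle, \Cref{fig:bandmove}) rather than your equivalent two-band count $1+1+1-2=1$ from $L_1\sqcup L_2\sqcup\bigcirc$, and the moreover clause is read off directly from the Tait graph correspondence in \Cref{cor:2cubediagram} rather than via Menasco's theorem.
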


\begin{figure}
\includegraphics[width=5in]{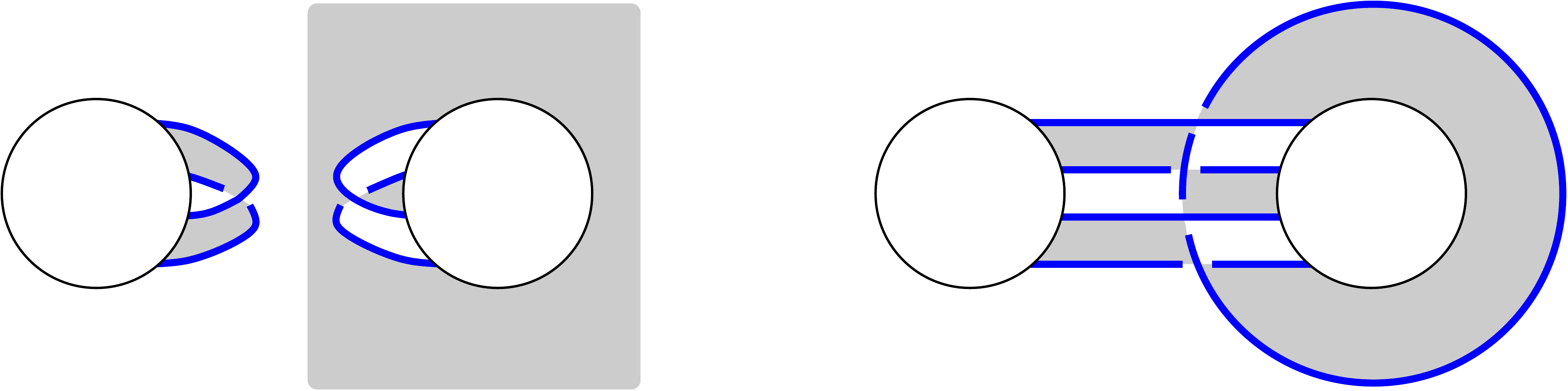}
\put(-360,75){$D_1$}
\put(-345,42){$\mathcal{T}_1$}
\put(-280,75){$D_2$}
\put(-250,42){$\mathcal{T}_2$}
\put(-190,42){$\implies$}
\put(-155,75){$D$}
\put(-145,42){$\mathcal{T}_1$}
\put(-50,42){$\mathcal{T}_2$}
\caption{
\label{fig:D}
The recursive construction of $\cD$.}
\end{figure}

\begin{proof}
The implications (1)$\implies$(2)$\implies$(3) were described above.
The implication (3)$\implies$(4) was as well, and we supply the details in \Cref{cor:altcubiquity} and in the paragraph following \Cref{prop:cubiq}.
The implication (4)$\implies$(5) is contained in \Cref{cor:2cubediagram}, which is the culmination of Sections \ref{sec:cubes}, \ref{sec:graphs}, and \ref{sec:cubegraphs}.
Lastly, for the implication (5)$\implies$(1), observe that if $D_1$ and $D_2$ compose to $D$, then there exists a  cobordism involving a single 0-handle and a single 1-handle from a connected sum $L_1 \# L_2$ to $L$, as displayed in \Cref{fig:bandmove} (diagram $D_*$ represents link $L_*$).
Since the unknot $L_0$ is trivially $\chi$-ribbon and connected sum preserves $\chi$-ribbonness, the recursive construction of $\cD$ gives (5)$\implies$(1).
\end{proof}

\begin{figure}
\includegraphics[width=6in]{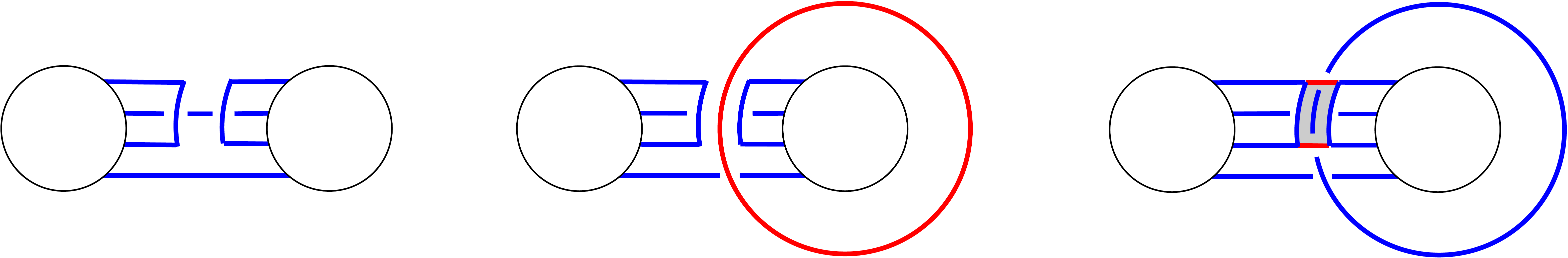}
\put(-421,33){$\cT_1$}
\put(-346,33){$\cT_2$}
\put(-279,33){$\cT_1$}
\put(-204,33){$\cT_2$}
\put(-115,33){$\cT_1$}
\put(-40,33){$\cT_2$}
\caption{
\label{fig:bandmove}
A ribbon cobordism of Euler characteristic zero from $L_1 \# L_2$ to $L$.
}
\end{figure}

\subsubsection{A corollary on crossing number.}  The inductive construction of $\cD$ and the resolution of Tait's conjecture on crossing number \cite{kauffman,murasugi,thistle} shows that the crossing number of a nontrivial, $\ud = 1$, $\chi$-slice, prime, alternating link is congruent to $1 \pmod 3$.
For instance, this criterion shows that the pretzel $P(1,2,4,4)$, which is a nontrivial, $\ud = 1$, prime, alternating link, is not $\chi$-slice.

\subsubsection{An algorithm.} Theorem \ref{thm:links2} provides the following simple algorithm, at most quadratic in the number of crossings, to test whether a nontrivial, connected, chessboard-colored, prime alternating diagram $D$  presents a $\chi$-slice alternating link $L$ with $\ud = 1$.
We begin with an input diagram $D$ and test whether there exists a {\em decomposing circle}, i.e., a component which has no crossings with itself and four crossings with the rest of the diagram.
As we show in \Cref{cor:2cubediagram}, the set $\cD$ is closed under the inverse of the composition used to form $D$ from $D_1$ and $D_2$ above.
If a decomposing circle exists, then we decompose $D$ along it into a pair of diagrams $D_1$ and $D_2$ with $\mathrm{cr}(D_1) + \mathrm{cr}(D_2) = \mathrm{cr}(D) - 2$.
Iterate until reaching a collection of diagrams, none of which contains a decomposing circle.
Then $L$ is $\chi$-slice with $\ud = 1$ if and only if each diagram is a copy of \onecrossing\!.

\subsubsection{Complexity of rational balls.}
If $L$ is a $\chi$-slice alternating link with $\ud(L) = 1$ and $\det(L) = 4^n$, then the proof of \Cref{thm:links2} shows that $L$ bounds a $\chi$-ribbon surface $F$ with $n+1$ 0-handles and $n$ 1-handles.
The rational ball $\Sigma(F)$ therefore has a handle decomposition with 1 0-handle, $n$ 1-handles, and $n$ 2-handles (see for example \cite{gs,GLslice}).
Given how naturally this decomposition arises, one may pause to ask whether this decomposition has the minimum complexity (number of handles) amongst all rational balls filling $\Sigma(L)$.
However, this is not the case: one in a family of examples which bound a $\chi$-ribbon surface with two 0-handles and one 1-handle appears in \Cref{fig:simpleribbon}.
Still, it would be interesting to study minimum complexity rational balls filling the examples of \Cref{thm:links2}.
Which examples bound a rational ball consisting of a single 0-, 1-, and 2-handle?


\begin{figure}
\includegraphics[width=2in]{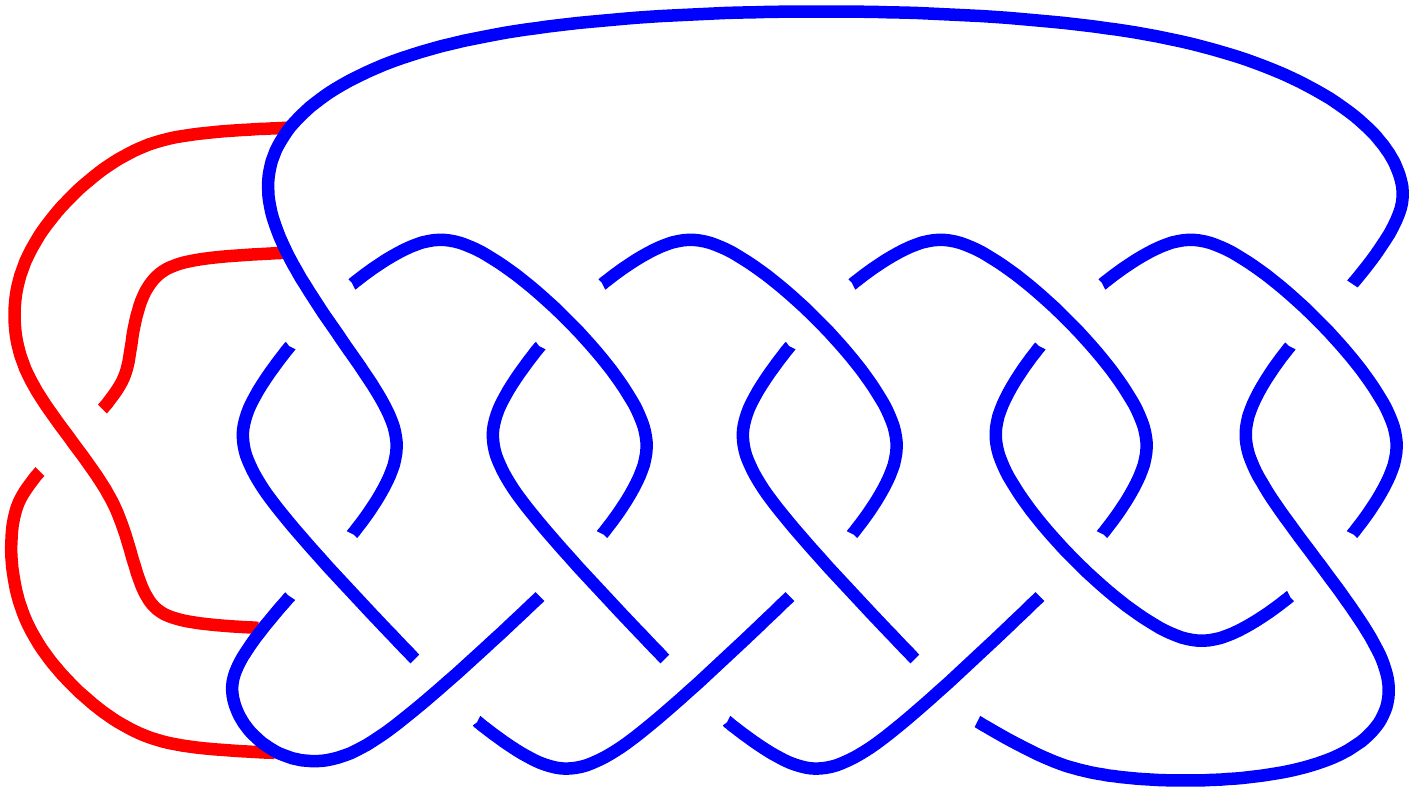}
\caption{
\label{fig:simpleribbon}
A single band move transforms the $\ud=1$ alternating link pictured into a two-component unlink.}
\end{figure}


\subsection{Ribbon cobordism and concordance.}

Our techniques extend to the finer relation of ribbon cobordism between branched double covers of alternating links.
Recall that a rational homology cobordism $W: Y_1 \to Y_2$ is a {\em ribbon cobordism} if it admits a handle decomposition rel $Y_1$ composed of 1- and 2-handles \cite{dlvvw}.
A rational homology cobordism $W: Y_1 \to Y_2$ is a {\em quasi-ribbon cobordism} if the restriction map $H^2(W;\bZ) \to H^2(Y_1;\bZ)$ surjects \cite[Definition 2.1.2]{huber:thesis}.
In this language, every ribbon cobordism is a quasi-ribbon cobordism \cite[Lemma 3.1]{dlvvw}.

Generalizing the notion of cubiquity, suppose that $\Lambda_1$ is an arbitrary lattice.
A {\em stabilization} of $\Lambda_1$ is the direct sum of $\Lambda_1$ with a Euclidean lattice of some rank.
Suppose that $\Lambda_1$ is not the stabilization of another lattice.
A sublattice $\Lambda_2 \subset \Lambda_1 \oplus \bZ^m$ is {\em cubiquitous} if every unit cube $x + \{0,1\}^m$, $x \in \Lambda_1 \oplus \bZ^m$, contains a point of $\Lambda_2$.

In what follows, let $L_1$ and $L_2$ denote a pair of non-split alternating links.

\begin{maintheorem}
\label{thm:ribboncubiquity}
Suppose that there exists a quasi-ribbon cobordism $W : \Sigma(L_1) \to \Sigma(L_2)$.
Then $\Lambda(L_2)$ admits a cubiquitous embedding into a stabilization of $\Lambda(L_1)$.
\end{maintheorem}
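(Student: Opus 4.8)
The plan is to imitate the proof of \Cref{thm:GJ1} carried out in \Cref{sec:cubiquity}, replacing the rational homology ball appearing there with a definite filling manufactured from $W$. Fix reduced alternating diagrams $D_1,D_2$ of $L_1,L_2$ with black Tait graphs $G_1,G_2$, and recall from \Cref{sec:cubiquity} that $\Sigma(L_i)$ bounds a canonical negative-definite $4$-manifold $X(D_i)$ whose intersection lattice is governed by $\Lambda(L_i)=\flow(G_i)$, while the mirror $-\Sigma(L_i)=\Sigma(\overline{L_i})$ bounds the corresponding $X(\overline{D_i})$, governed by the cut lattice $\Cut(G_i)$. Since $W$ is a rational homology cobordism between rational homology spheres we have $b_2(W)=0$, so
\[
Z:=X(D_1)\cup_{\Sigma(L_1)}W\cup_{\Sigma(L_2)}X(\overline{D_2})
\]
is a closed, negative-definite $4$-manifold with $b_2(Z)=b_2(X(D_1))+b_2(X(\overline{D_2}))$, and similarly $Z':=X(\overline{D_1})\cup_{\Sigma(L_1)}\overline{W}\cup_{\Sigma(L_2)}X(D_2)$ is closed and negative-definite. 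Applying Donaldson's theorem gives $Q_Z\cong -\mathbb{Z}^{b_2(Z)}$ and $Q_{Z'}\cong -\mathbb{Z}^{b_2(Z')}$, and restricting along the inclusions of the capping pieces — whose second homology is supported in disjoint regions and is therefore pairwise orthogonal — yields finite-index, orthogonally split lattice embeddings
\[
Q_{X(D_1)}\oplus Q_{X(\overline{D_2})}\hookrightarrow Q_Z,\qquad Q_{X(\overline{D_1})}\oplus Q_{X(D_2)}\hookrightarrow Q_{Z'}.
\]

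These two embeddings, together with the finite-index orthogonal splitting $\flow(G_i)\perp\Cut(G_i)\subset\mathbb{Z}^{E(D_i)}$ (with $\Cut(G_i)$ the planar-dual flow lattice), are exactly the data that the combinatorial machinery of \Cref{sec:cubiquity} underlying \Cref{cor:altcubiquity} and \Cref{prop:cubiq} converts into a cubiquitous embedding of a flow lattice. Running that machinery here should produce a cubiquitous embedding of $\Lambda(L_2)=\flow(G_2)$ into a lattice $\mathcal{L}$ assembled from the $L_1$-summands $Q_{X(D_1)}, Q_{X(\overline{D_1})}$ and the ambient Euclidean lattices. The content of the theorem is then the identification of $\mathcal{L}$ as a stabilization of $\Lambda(L_1)$: when $L_1$ is the unknot all $L_1$-summands vanish, $\mathcal{L}=\mathbb{Z}^m$, and one recovers \Cref{thm:GJ1}; in general the same bookkeeping that in the absolute case reconstructs a Euclidean lattice here reconstructs $\Lambda(L_1)\oplus\mathbb{Z}^m$, \emph{provided} the linking form of $\Sigma(L_1)$ is fully reflected in the discriminant of the relevant sublattice at the incoming end of $W$. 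This is precisely what the quasi-ribbon hypothesis provides: surjectivity of $H^2(W;\mathbb{Z})\to H^2(\Sigma(L_1);\mathbb{Z})$ guarantees that no torsion is lost across $W$ at the $\Sigma(L_1)$-end, so that $\mathcal{L}$ is genuinely a stabilization of $\Lambda(L_1)$ rather than a proper commensurable lattice. (For a general rational homology cobordism one controls $\mathcal{L}$ only up to commensurability, which is too weak to conclude cubiquity.)

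I expect the main obstacle to be exactly this last identification — the discriminant-form and torsion accounting that pins quasi-ribbonness as the precise hypothesis making $\mathcal{L}$ a stabilization of $\Lambda(L_1)$, specializing correctly to \Cref{thm:GJ1}. Two further points demand care. First, $\overline{W}$, used to build $Z'$, need not itself be quasi-ribbon, so the second embedding must be exploited using only that $\overline{W}$ is a rational homology cobordism together with the constraints that ``half lives, half dies'' places on the images of $H^2(\overline{W})\to H^2(\Sigma(\overline{L_i}))$; one must check that this residual information suffices to run the combinatorial step governing the $L_2$-side. Second, one must verify that each $H_2(X(D_i);\mathbb{Z})$ is torsion-free and that the inclusions into $H_2(Z;\mathbb{Z})$ and $H_2(Z';\mathbb{Z})$ are injective with finite-index image, so that the displayed maps are honest embeddings of the full intersection lattices.
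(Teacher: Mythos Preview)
Your plan diverges from the paper's proof at exactly the point where it becomes vague, and the gap is real. The paper does \emph{not} build any closed manifold or invoke Donaldson's theorem. Instead it works with the single open positive-definite filling $W\cup X_2$ of $\Sigma(L_1)$, where $X_2$ is the Gordon--Litherland manifold for $L_2$. The quasi-ribbon hypothesis is used to show that $W\cup X_2$ inherits sharpness from $X_2$ (\Cref{lem:hereditarysharp}): surjectivity of $H^2(W)\to H^2(\Sigma(L_1))$ means every $\spc$ structure on $\Sigma(L_1)$ extends over $W$, and then a sharp extension over $X_2$ glues on. Sharpness of $W\cup X_2$ says its lattice $d$-invariant matches that of $\Lambda(L_1)$, and the rigidity result \Cref{prop:rigid} (imported from \cite{mut}) then forces the intersection lattice of $W\cup X_2$ to be \emph{isometric} to $\Lambda(L_1)\oplus\bZ^n$. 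The inclusion $X_2\hookrightarrow W\cup X_2$ gives the embedding $\Lambda(L_2)\hookrightarrow\Lambda(L_1)\oplus\bZ^n$ directly. For cubiquity there is a second new ingredient you do not have: \Cref{prop:uniqueshort} produces $\chi_0\in\Short(\Lambda(L_1))$ (from an acyclic orientation on $G_1$) that is the \emph{unique} short representative of its coset in $\cX(\Lambda(L_1))$; this pins the short elements in one coset of $\cX(\Lambda(L_1)\oplus\bZ^n)$ down to $\chi_0+\{\pm1\}^n$, and sharpness of $X_2$ forces every coset of $2\Lambda(L_2)$ inside that coset to meet this set.

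Your route --- Donaldson on $Z$ and $Z'$ --- does not deliver either of these steps. An embedding $\Lambda(L_1)\oplus\Cut(G_2)\hookrightarrow\bZ^N$ does not by itself identify the orthogonal complement of $\Cut(G_2)$ as a stabilization of $\Lambda(L_1)$; that requires exactly the rigidity input of \Cref{prop:rigid}, which you never invoke. And even granting the identification, there is no mechanism for cubiquity in your setup: in the absolute case (\Cref{thm:GJ}) it is the vanishing $d(Y,\spinct)=0$ for $\spc$ structures extending over the rational ball that forces short representatives into $\{\pm1\}^n$, and a closed-manifold Donaldson argument carries no analogue of this $d$-invariant information. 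The sentences ``Running that machinery here should produce\ldots'' and ``the same bookkeeping\ldots reconstructs $\Lambda(L_1)\oplus\bZ^m$'' are where the entire proof lives, and they are not filled in. Your concern about $\overline{W}$ failing to be quasi-ribbon is a symptom: the paper never needs $\overline{W}$ or $Z'$ at all.
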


\noindent
\Cref{thm:ribboncubiquity} generalizes \Cref{thm:GJ1}, because puncturing a rational ball filling $\Sigma(L_2)$ yields a quasi-ribbon cobordism from $S^3 = \Sigma(\bigcirc)$ to $\Sigma(L_2)$.
The proof of \Cref{thm:ribboncubiquity} merges the proof of \Cref{thm:GJ1} with a rigidity result concerning embeddings of graph lattices into Euclidean lattices established in \cite{mut}.
We obtain a monotonicity property of $\ud$ as a result, a common theme in the study of ribbon cobordism:

\begin{maincor}
\label{cor:inequality}
If there exists a quasi-ribbon cobordism from $\Sigma(L_1)$ to $\Sigma(L_2)$, then $\ud(L_2) \le \ud(L_1)$.
\end{maincor}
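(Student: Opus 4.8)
The plan is to deduce \Cref{cor:inequality} from \Cref{thm:ribboncubiquity} by a short index computation, in direct parallel with the deduction of the inequality ``$\ud(\Lambda)\le 1$ for cubiquitous $\Lambda$'' from \Cref{prop:cubiq} carried out in the paragraph following that proposition. The point is that cubiquity of $\Lambda(L_2)$ inside a stabilization of $\Lambda(L_1)$ bounds an index, which in turn bounds the ratio of the two normalized determinants by $1$.

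Concretely, I would begin by applying \Cref{thm:ribboncubiquity} to fix a cubiquitous embedding $\Lambda(L_2) \hookrightarrow M := \Lambda(L_1) \oplus \bZ^m$, where $\Lambda(L_1)$ is chosen (stabilizing first if necessary) so as not to be a further stabilization, and write $\Lambda_i := \Lambda(L_i)$, $r_i := \rk \Lambda_i$, so that $\rk M = r_1 + m$. Next I would record that $\Lambda_2$ is a \emph{full-rank} sublattice of $M$: if $\Lambda_2$ spanned a proper subspace $V \subsetneq M\otimes\bR$, then picking $x \in M$ at distance more than $\sqrt{m}$ from $V$ (possible since $M/(M\cap V)$ is infinite), the cube $x + \{0,1\}^m$ would be disjoint from $\Lambda_2$, contradicting cubiquity. (In the present situation full rank also follows from $W$ being a rational homology cobordism.) Consequently $r_2 = r_1 + m$ and $\det \Lambda_2 = [M:\Lambda_2]^2\,\det M$.

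I would then bound the index as in \Cref{prop:cubiq}: by translating cubes, the cubiquity condition $\Lambda_2 \cap (x + \{0,1\}^m) \ne \emptyset$ for all $x \in M$ is equivalent to the statement that the standard cube $\{0,1\}^m$ meets every coset of $\Lambda_2$ in $M$; since this cube contains exactly $2^m$ points of $M$, we get $[M:\Lambda_2] \le 2^m$, hence $\det \Lambda_2 \le 4^m\,\det M$. Finally, using $\det M = \det(\Lambda_1 \oplus \bZ^m) = \det \Lambda_1$ and $r_2 = r_1 + m$,
\[
\ud(L_2) = \ud(\Lambda_2) = \frac{\det \Lambda_2}{4^{\,r_2}} \le \frac{4^m \det \Lambda_1}{4^{\,r_1+m}} = \frac{\det \Lambda_1}{4^{\,r_1}} = \ud(\Lambda_1) = \ud(L_1).
\]

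All of the real content is in \Cref{thm:ribboncubiquity}; what is left is bookkeeping. The one point that requires attention is that the factor $4^m$ produced by the index estimate must exactly cancel the factor $4^m$ contributed to the denominator by the $m$ new Euclidean summands --- which is precisely what makes $\ud$, rather than $\det$, the correct monotone quantity, and the reason stabilizing $\Lambda(L_1)$ does not weaken the bound.
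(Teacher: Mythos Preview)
Your argument is correct and is precisely the intended deduction: the paper states \Cref{cor:inequality} without an explicit proof, presenting it as an immediate consequence of \Cref{thm:ribboncubiquity} via the same index estimate used after \Cref{prop:cubiq}. One small remark: the parenthetical ``stabilizing first if necessary'' is unnecessary (and slightly backwards in wording), since $\Lambda(L_1)$ is by definition the flow lattice of a reduced diagram and hence contains no unit vectors; also, as you note, the finite-index property is already built into the proof of \Cref{thm:ribboncubiquity}, so the geometric argument for full rank, while correct, is not needed here.
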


Just as \Cref{thm:ribboncubiquity} generalizes \Cref{thm:GJ1}, one could make a very optimistic improvement on \Cref{conj:main}:

\begin{mainconjecture}
\label{conj:main2}
If $\Lambda(L_2)$ admits a cubiquitous embedding into a stabilization of $\Lambda(L_1)$, then there exists a ribbon cobordism from $\Sigma(L_1)$ to $\Sigma(L_2)$.
\end{mainconjecture}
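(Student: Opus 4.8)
The statement is a conjecture whose special case $L_1 = \bigcirc$ recovers \Cref{conj:main} --- itself open --- so the following is a program rather than a proof: the plan is to run the argument of \Cref{thm:links2} relative to $L_1$. The first step is to unwind the relative cubiquity hypothesis. Write $\pi \colon \Lambda(L_1) \oplus \bZ^m \to \Lambda(L_1)$ for the projection and set $\Lambda_2^0 := \Lambda(L_2) \cap (\{0\} \oplus \bZ^m)$. Since every cube $x + \{0,1\}^m$ lies in the single slice $\{\pi(x)\} \times \bR^m$, a cubiquitous embedding $\Lambda(L_2) \subset \Lambda(L_1) \oplus \bZ^m$ is equivalent to the conjunction of two conditions: $\pi$ restricts to a surjection $\Lambda(L_2) \onto \Lambda(L_1)$, and $\Lambda_2^0$ is a cubiquitous sublattice of $\bZ^m$ in the sense of \Cref{thm:GJ1}. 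Indeed, fixing $\lambda \in \Lambda(L_1)$ and a lift $(\lambda, w) \in \Lambda(L_2)$, the cubes in the slice over $\lambda$ all meet $\Lambda(L_2)$ if and only if the coset $w + \Lambda_2^0$ is cubiquitous, if and only if $\Lambda_2^0$ is. The covolume of the extension factors as $\det \Lambda(L_2) = \det \Lambda(L_1) \cdot \det \Lambda_2^0$, whence $\ud(L_2) = \ud(L_1) \cdot \ud(\Lambda_2^0)$; since $\ud(\Lambda_2^0) \le 1$, this recovers \Cref{cor:inequality}, with equality precisely when $\Lambda_2^0$ is a $2$-cube tiling lattice.

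I would therefore attack first the equality case $\ud(L_2) = \ud(L_1)$, where the machinery behind \Cref{thm:links2} applies. Here $\Lambda_2^0$ is a $2$-cube tiling lattice, hence a \hajos\ lattice $\Lambda_H$ by \Cref{thm:tiling}, and the task is a relative structure theorem: if $D_2$ is a reduced alternating diagram of $L_2$ whose black graph $G_2$ satisfies $\flow(G_2) = \Lambda(L_2)$ cubiquitous over $\flow(G_1) = \Lambda(L_1)$ with tiling fibre $\Lambda_H$, then $D_2$ is obtained from a reduced alternating diagram $D_1$ of $L_1$ by a sequence of the compositions defining $\cD$ --- the relative analogue of condition~(5) of \Cref{thm:links2}. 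One would extract from the lower-triangular \hajos\ matrix $H$ a filtration $\flow(G_1) = \Lambda^{(0)} \subset \cdots \subset \Lambda^{(k)} = \flow(G_2)$ whose successive extensions are prescribed by the columns of $H$, realize each $\Lambda^{(i)}$ as $\flow(G^{(i)})$ for a planar graph $G^{(i)}$ obtained from $G^{(i-1)}$ by one composition with a diagram in $\cD$ --- with planarity forced at each stage as in \Cref{thm:2cubeflow} --- and identify $G^{(i)}$ from the lattice data via the rigidity of graph-lattice embeddings of \cite{mut}, exactly as in the proof of \Cref{thm:ribboncubiquity}. This relative structure theorem is the step I expect to be the main obstacle: one must control the cocycle describing how $\Lambda_H$ is glued to $\flow(G_1)$ inside $\flow(G_2)$ and show it is realized by planar compositions, and neither \Cref{thm:tiling} nor the \hajos-Minkowski theorem says anything about this relative data directly.

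Granting the structure theorem, the passage to a ribbon cobordism is formal. Let $L_1 = M_0, M_1, \dots, M_k = L_2$ be the links presented by the intermediate diagrams, so that the diagram of $M_i$ is a composition of the diagram of $M_{i-1}$ with a diagram $C_i \in \cD$. On branched double covers this composition lifts to a cobordism from $\Sigma(M_{i-1}) \# \Sigma(C_i)$ to $\Sigma(M_i)$ built from a single $1$-handle (from the birth of the unknotted component) and a single $2$-handle (from the band of \Cref{fig:bandmove}), hence a ribbon cobordism --- consistently with its vanishing Euler characteristic. Since $C_i \in \cD$, the link it presents is $\chi$-ribbon by \Cref{thm:links2}, so $\Sigma(C_i)$ bounds a rational homology ball assembled from one $0$-handle together with $1$- and $2$-handles; deleting the $0$-handle yields a ribbon cobordism $S^3 \to \Sigma(C_i)$, and gluing this onto a collar $\Sigma(M_{i-1}) \times I$ near the top (a boundary connected sum of cobordisms) produces a ribbon cobordism $\Sigma(M_{i-1}) \to \Sigma(M_{i-1}) \# \Sigma(C_i)$. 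Concatenating the two realizes one composition step by a ribbon cobordism $\Sigma(M_{i-1}) \to \Sigma(M_i)$; chaining the $k$ steps gives a ribbon cobordism $\Sigma(L_1) \to \Sigma(L_2)$, as required.

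In the general case $\ud(L_2) < \ud(L_1)$ one meets exactly the difficulty that blocks \Cref{conj:main}: the fibre $\Lambda_2^0 \subset \bZ^m$ is only cubiquitous, not tiling, and at present there is no analogue of \Cref{thm:tiling} classifying such lattices, hence no recursive construction to feed into the cobordism-building step above. A proof of the full conjecture thus seems to require first a recursive understanding of arbitrary cubiquitous lattices --- that is, a proof of \Cref{conj:main} itself --- after which one would hope the relative argument sketched here carries over with ``$2$-cube tiling lattice'' replaced throughout by ``cubiquitous lattice''.
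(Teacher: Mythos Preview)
This statement is a conjecture in the paper, not a theorem; there is no proof to compare against. Your recognition of this, and your framing of the proposal as a program, is appropriate.

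Your unwinding of relative cubiquity into ``$\pi$ surjects and $\Lambda_2^0 \subset \bZ^m$ is cubiquitous'' is correct, and the resulting multiplicativity $\ud(L_2) = \ud(L_1)\cdot\ud(\Lambda_2^0)$ gives a clean re-derivation of \Cref{cor:inequality}. The ribbon-cobordism assembly in your third paragraph is also correct and matches the paper's (5)$\implies$(1) argument for \Cref{thm:generalization2}.

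For the equality case, the paper proves only the further-restricted situation $\ud(L_1)=\ud(L_2)=1$ with $D_1,D_2\in\cD$ (this is \Cref{thm:generalization2}, via \Cref{prop:2cube}). There the obstacle you flag---the cocycle gluing $\Lambda_H$ to $\flow(G_1)$---dissolves, because $\Lambda(L_1)$ is itself a $2$-cube tiling lattice in some $\bZ^m$, so $\Lambda(L_2)$ becomes a $2$-cube tiling lattice in $\bZ^{m+n}$ and a single \hajos\ matrix for all of $\Lambda(L_2)$ is available. The paper then inducts \emph{downward} on $n$: the last \hajos\ column is a square-$4$ irreducible, hence by \Cref{prop:planarhajos} corresponds to a decomposing cycle in $G_2$; decompose along it, drop the last row and column of $H$, and repeat. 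This is the reverse of your upward filtration from $G_1$, and it has the advantage that each step is a concrete decomposition of the planar graph already in hand, so one never needs to reconstruct an intermediate $G^{(i)}$ purely from lattice data. Your more general equality case (arbitrary $\ud(L_1)=\ud(L_2)$, not necessarily $1$) is singled out in the paper as ``quite tractable'' but is left open; the cocycle issue you raise is precisely the remaining gap there.
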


\noindent Even in the case that $L_1 = \bigcirc$, \Cref{conj:main2} strengthens \Cref{conj:main}, as it asserts the existence of a {\em ribbon} rational homology ball filling $\Sigma(L_2)$ when $\Lambda(L_2)$ is cubiquitous.
The special case of \Cref{conj:main2} in which $\ud(L_1) = \ud(L_2)$ seems quite tractable.

Using \Cref{thm:ribboncubiquity}, we can classify pairs of links from \Cref{thm:links2} related by a ribbon $\chi$-concordance.
A cobordism $F : L_1 \to L_2$ in $S^3 \times [0,1]$ is a {\em $\chi$-concordance} if $\chi(F) = 0$ and $F$ has no closed components, and it is a {\em ribbon $\chi$-concordance} if, in addition, projection to $[0,1]$ defines a Morse function on $F$ with no index-2 critical points.
Note that there is no assumption placed on the orientability of $F$.
The terminology comes from \cite[Definition 2.1]{huber}, which differs somewhat from \cite[Definition 2]{DO}.
The result comes in the form of a partial order $\preceq$ on $\cD$.
Fix any chessboard colored diagram $D_0 \in \cD$.
Recursively define $D_0 \preceq \cdot$ by declaring that $D_0 \preceq D_0$ and that $D_0 \preceq D$ if $D$ is a composition of $D_1, D_2 \in \cD$ with $D_0 \preceq D_1$.


\begin{maintheorem}
\label{thm:generalization2}
Suppose that $D_1,D_2 \in \cD - \{ \round \}$ are chessboard-colored diagrams of prime, $\chi$-slice, $\ud=1$ alternating links $L_1, L_2$.
The following are equivalent:
\begin{enumerate}
\item
there exists a ribbon $\chi$-concordance $F: L_1 \to L_2$;
\item
there exists a ribbon cobordism $W: \Sigma(L_1) \to \Sigma(L_2)$;
\item
there exists a quasi-ribbon cobordism $W: \Sigma(L_1) \to \Sigma(L_2)$;
\item
$\Lambda(D_2)$ admits a cubiquitous embedding into a stabilization of $\Lambda(D_1)$; and
\item
$D_1 \preceq D_2$.
\end{enumerate}
\end{maintheorem}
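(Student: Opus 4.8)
The plan is to establish the chain of implications $(1) \Rightarrow (2) \Rightarrow (3) \Rightarrow (4) \Rightarrow (5) \Rightarrow (1)$, following the template of the proof of \Cref{thm:links2} and leveraging the new ingredient, \Cref{thm:ribboncubiquity}, for the one step that genuinely involves cobordisms rather than concordances. The implication $(1) \Rightarrow (2)$ is immediate: a ribbon $\chi$-concordance $F : L_1 \to L_2$, being built from $0$- and $1$-handles rel $L_1$, lifts to the branched double cover to give a cobordism $\Sigma(F) : \Sigma(L_1) \to \Sigma(L_2)$ built from $1$- and $2$-handles (the lift of a $1$-handle attached along a trivially-embedded arc is a $2$-handle, as in \cite[Proposition 2.6]{DO} and the handle count following \Cref{thm:links2}), and a Mayer--Vietoris computation shows it is a rational homology cobordism. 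The implication $(2) \Rightarrow (3)$ is the cited \cite[Lemma 3.1]{dlvvw}: every ribbon cobordism is quasi-ribbon. The implication $(3) \Rightarrow (4)$ is exactly \Cref{thm:ribboncubiquity}, together with the observation that since $L_1$ is prime with $D_1 \in \cD$, the lattice $\Lambda(D_1) = \Lambda(L_1)$ is not a stabilization, so the statement of \Cref{thm:ribboncubiquity} applies with $\Lambda(L_1)$ in the role of the unstabilized base lattice.

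The heart of the argument is $(4) \Rightarrow (5)$. Here I would adapt the combinatorial analysis used to prove $(4) \Rightarrow (5)$ in \Cref{thm:links2} (i.e. the contents of \Cref{cor:2cubediagram} and Sections \ref{sec:cubes}--\ref{sec:cubegraphs}) to the relative setting. Since $\ud(L_1) = \ud(L_2) = 1$, both $\Lambda(D_1)$ and $\Lambda(D_2)$ are $2$-cube tiling lattices, so by \Cref{thm:tiling} each has a Haj\'os basis: a lower-triangular matrix with $2$'s on the diagonal and $0$'s and $1$'s below. A cubiquitous embedding $\Lambda(D_2) \hookrightarrow \Lambda(D_1) \oplus \bZ^m$ with $\ud = 1$ on both ends forces the embedding to be a $2$-cube tiling sublattice of a $2$-cube tiling lattice, and the Haj\'os normal form should be compatible: after reordering the orthonormal basis, $\Lambda(D_2)$ sits inside $\Lambda(D_1) \oplus \bZ^m$ as the span of a Haj\'os matrix obtained from that of $\Lambda(D_1)$ by appending rows and columns of the triangular type. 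Translating this back through the dictionary between Haj\'os matrices, planar graphs (\Cref{thm:2cubeflow}), and alternating diagrams, the nesting of lattices corresponds precisely to obtaining the graph $G(D_2)$ from $G(D_1)$ by the recursive graph operation that mirrors the diagram composition of \Cref{fig:D}; and that is the definition of $D_1 \preceq D_2$. I expect the main technical obstacle to lie in showing that the lattice embedding can be put in this triangular/nested normal form simultaneously with the Haj\'os forms of the two lattices --- that is, a relative version of the Haj\'os--Minkowski rigidity --- and in checking that each elementary step of the induced recursion on graphs is realized by the diagram composition rather than by some connected-sum artifact; the primeness hypothesis $D_1, D_2 \in \cD \setminus \{\round\}$ and the crossing-number bookkeeping from the algorithm paragraph following \Cref{thm:links2} should rule out the latter.

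Finally, $(5) \Rightarrow (1)$ proceeds by induction on the recursive definition of $D_0 \preceq \cdot$ with $D_0 = D_1$. The base case $D_1 \preceq D_1$ is the identity concordance. For the inductive step, if $D_2$ is a composition of $E_1, E_2 \in \cD$ with $D_1 \preceq E_1$, then by induction there is a ribbon $\chi$-concordance $F' : L_1 \to L(E_1)$; composing $F'$ (stabilized by a split trivial concordance on the $L(E_2)$ side) with the Euler-characteristic-zero ribbon cobordism from $L(E_1) \# L(E_2)$ to $L_2$ of \Cref{fig:bandmove} --- which consists of a single $0$-handle and a single $1$-handle and hence has no index-$2$ critical points --- yields a ribbon $\chi$-concordance $F : L_1 \to L_2$. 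One must check that the composite has $\chi = 0$ (it does: $\chi(F') = 0$, the band move contributes $0$, and introducing a split unknot summand to cap off against contributes a $0$-handle and a $1$-handle cancelling in Euler characteristic) and no closed components, both of which are routine. This closes the cycle of implications and completes the proof.
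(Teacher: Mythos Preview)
Your proposal is correct and follows essentially the same route as the paper: the same chain $(1)\Rightarrow(2)\Rightarrow(3)\Rightarrow(4)\Rightarrow(5)\Rightarrow(1)$, with the same ingredients at each step. Your sketch of $(4)\Rightarrow(5)$---build a \hajos\ matrix for $\Lambda(D_2)\subset\Lambda(D_1)\oplus\bZ^n$ whose top-left block is a \hajos\ matrix for $\Lambda(D_1)$, then peel off the last column (a decomposing cycle in $G(D_2)$ via \Cref{prop:planarhajos}) and induct---is precisely the content of \Cref{prop:2cube}, and your inductive argument for $(5)\Rightarrow(1)$ fleshes out what the paper records in one line by reference to the proof of \Cref{thm:links2}.
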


We close with a pair of related conjectures:

\begin{mainconjecture}
\label{conj:alternatingribbon}
If $W : Y_1 \to Y_2$ is a ribbon rational homology cobordism and $Y_2$ is the branched double cover of an alternating link, then so is $Y_1$.
\end{mainconjecture}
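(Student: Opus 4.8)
The plan is to transport the two definite fillings of $\Sigma(L_2)$ backwards across $W$ and then to recognize $Y_1$ as a branched double cover by a lattice-realization argument in the spirit of \cite{lisca,lens,mccoy}. Write $Y_2 = \Sigma(L_2)$ and fix a reduced alternating diagram $D_2$. Pushing the black and white chessboard surfaces of $D_2$ into $D^4$ and passing to branched double covers produces a negative-definite $4$-manifold $X^-$ and a positive-definite $4$-manifold $X^+$ with boundary $Y_2$, whose intersection lattices are the flow lattice of the black Tait graph $G(D_2)$ and the cut lattice of its planar dual, and which are sharp in the sense of \ozsvath--\szabo. Reversing $W$ and gluing it onto $X^-$ and onto $X^+$ along $Y_2$ produces a negative-definite $4$-manifold $M^-$ and a positive-definite $4$-manifold $M^+$ bounded by $Y_1$; since $W$ is a rational homology cobordism, their rational intersection lattices coincide with those of $X^\pm$. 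Capping $M^-$ off with $\overline{M^+}$ then gives a closed negative-definite manifold to which Donaldson's theorem applies, exactly as in the proofs of \Cref{thm:GJ1} and \Cref{thm:ribboncubiquity}, and the rigidity of full-rank embeddings of graph lattices into Euclidean lattices from \cite{mut} should force the intersection lattices of $M^-$ and $M^+$ to be the flow lattice and cut lattice of a single planar graph $G_1$, up to the Euclidean stabilizations contributed by the $1$-handles of $W$.

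The second step --- and, I expect, the main obstacle --- is recognition: one wants to conclude that any rational homology sphere bounding a sharp negative-definite filling whose lattice is the flow lattice of a planar graph $G_1$, together with a compatible sharp positive-definite filling whose lattice is the cut lattice of $G_1$, is $\Sigma(L_1)$ for the non-split alternating link $L_1$ with black Tait graph $G_1$. The forward implication is the Tait construction above; the reverse is the branched-double-cover, alternating-link analogue of the realization theorems of \cite{lisca,lens}. Proving it would require reconstructing from the pair of lattices an explicit chessboard-colored diagram whose black and white surfaces reproduce the two fillings, and excluding the possibility of landing in a strictly larger class of $3$-manifolds (for instance double covers of quasi-alternating but non-alternating links); in effect it asks for an ``alternating realization theorem'' not currently available in the required generality. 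Handling the Euclidean stabilizations coming from the $1$-handles of $W$, for which the rigidity of \cite{mut} would have to be extended, is a further technical point.

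A necessary input along the way is that the fillings $M^\pm$ are sharp; this should follow from a comparison of correction terms across $W$, using that ribbon rational homology cobordisms induce split injections on Heegaard Floer homology (\cite{dlvvw}, together with Zemke-type maps). An alternative, more topological route would try to make $W$ equivariant for the branched-cover involution on $Y_2$ and pass to the quotient, but even if the involution extends over $W$ one must still recognize the resulting branch locus in $S^3$ as an alternating link, which returns to the realization problem above. Finally, when $\ud(L_2) = 1$ the conjecture should reduce to \Cref{thm:links2} together with the order-theoretic analysis behind \Cref{thm:generalization2}, so the natural first milestone is the general statement, which we anticipate being resolved in tandem with \Cref{conj:main}.
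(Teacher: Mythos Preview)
The statement you are addressing is \Cref{conj:alternatingribbon}, which is explicitly a \emph{conjecture} in the paper; the paper does not prove it. The only evidence the paper offers is the special case in which $L_2$ is a $2$-bridge link (so $\Sigma(L_2)$ is a lens space): there a short $\pi_1$ argument, together with the Geometrization Theorem and a theorem of Hodgson--Rubinstein, forces $\Sigma(L_1)$ to be a lens space and hence $L_1$ to be $2$-bridge. No general argument is attempted.

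Your proposal is a research strategy rather than a proof, and you have correctly located the essential gap yourself. The first step --- transporting the sharp Gordon--Litherland fillings of $\Sigma(L_2)$ back along $W$ to obtain sharp definite fillings of $Y_1$ whose intersection lattices are stabilizations of $\Lambda(L_2)$ --- is essentially what the paper carries out in \Cref{lem:hereditarysharp} and \Cref{lem:stabilization} on the way to \Cref{thm:ribboncubiquity}; in particular, sharpness is obtained there directly from the quasi-ribbon condition, without appeal to split injections on Floer homology. The second step, however, is the obstruction: passing from ``$Y_1$ admits sharp fillings with the flow and cut lattices of a planar graph'' to ``$Y_1$ is the branched double cover of an alternating link'' is an alternating-link realization theorem that is not available. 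Indeed, even the a priori weaker statement that a $3$-manifold with the same $d$-invariants as $\Sigma(L)$ for alternating $L$ must itself be such a branched double cover is open; the paper notes that the still weaker \cite[Conjecture 1.4]{mut} (if $\Sigma(L_0)\cong\Sigma(L_1)$ with $L_0$ alternating, then $L_1$ is alternating) remains unsolved. Finally, your suggestion that the case $\ud(L_2)=1$ reduces to \Cref{thm:links2} and \Cref{thm:generalization2} is too optimistic: both of those results take as a hypothesis that $L_1$ is alternating, which is precisely the conclusion one is trying to establish here.
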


\begin{mainconjecture}
\label{conj:alternatingribbon2}
If $F : L_1 \to L_2$ is a $\chi$-ribbon concordance and $L_2$ is an alternating link, then so is $L_1$.
In particular, if $F : K_1 \to K_2$ is a ribbon concordance and $K_2$ is an alternating knot, then so is $K_1$.
\end{mainconjecture}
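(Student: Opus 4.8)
The plan is to pass to branched double covers, reducing the link statement to the three-manifold statement \Cref{conj:alternatingribbon} together with an extra ``link recovery'' input, and to attack the resulting rigidity via definite fillings built from an alternating diagram of $L_2$. Present $F:L_1\to L_2$ by a Morse function with critical points of index $0$ and $1$ only; since $\chi(F)=0$ and $F$ has no closed components, the numbers of index-$0$ and index-$1$ points agree, say both equal $k$. Reading upward from $L_1$, an index-$0$ point is the birth of a small unknotted, unlinked circle, and the corresponding elementary cobordism lifts to the $1$-handle attachment on $\Sigma(L)$ with result $\Sigma(L)\#(S^1\times S^2)$; an index-$1$ point is a band move, which lifts to a $2$-handle attachment on $\Sigma$. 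Hence $W:=\Sigma(F):\Sigma(L_1)\to\Sigma(L_2)$ has a handle decomposition rel $\Sigma(L_1)$ with $k$ one-handles and $k$ two-handles, so it is a ribbon cobordism; arguing as in \cite[Proposition 2.6]{DO}, and using $\det L_i\neq 0$ (so that $L_1$ is non-split), $W$ is a rational homology cobordism. Granting \Cref{conj:alternatingribbon}, $\Sigma(L_1)$ is then the branched double cover of \emph{some} non-split alternating link $L'$. This does not yet give the conjecture: distinct alternating links — for example certain alternating Montesinos links — can share a branched double cover, so \Cref{conj:alternatingribbon2} is genuinely stronger than \Cref{conj:alternatingribbon}, and one must use that $W$ is not an arbitrary ribbon cobordism but the branched double cover $\Sigma(F)$, hence carries a $\bZ/2$-action whose quotient is $S^3\times[0,1]$ with branch locus the concordance $F$.

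To extract an alternating diagram of $L_1$ itself, I would cap $W$ at the $\Sigma(L_2)$ end with the branched double covers $\Sigma(B_2)$ and $\Sigma(W_2)$ of the black and white Tait surfaces of a reduced alternating diagram $D_2$ of $L_2$. By \ozsvath\ and \szabo, these are sharp positive- and negative-definite fillings of $\Sigma(L_2)$ with $b_2(\Sigma(B_2))+b_2(\Sigma(W_2))=\mathrm{cr}(D_2)$; gluing on the rational homology cobordism $W$ yields positive- and negative-definite fillings $X_+,X_-$ of $\Sigma(L_1)$ — which one can arrange to be sharp, using that $W$ is ribbon — with $b_2(X_+)+b_2(X_-)=\mathrm{cr}(D_2)$. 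The aim is then to run a rigidity argument, modeled on Greene's characterization of non-split alternating links by pairs of definite spanning surfaces and on the graph-lattice embedding rigidity of \cite{mut} invoked for \Cref{thm:ribboncubiquity}, forcing the intersection lattices of $X_+$ and $X_-$ to be the flow and cut lattices of one connected planar graph $G$; one then identifies the alternating diagram with black Tait graph $G$ with a diagram of $L_1$, exploiting that $X_\pm$ are built on $\Sigma(L_1)$ together with its branched involution.

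The main obstacle is precisely this rigidity step. All of the readily available inputs are too weak: the Floer-theoretic injectivity results for ribbon cobordisms (so that $\widehat{HFK}(L_1)$ and $Kh(L_1)$ inject into the thin invariants of $L_2$, forcing $L_1$ to be $\delta$-thin with bounded genus and determinant and $\sigma(L_1)=\sigma(L_2)$; cf.\ \cite{dlvvw}), and likewise the definite fillings $X_\pm$, only control $\Sigma(L_1)$, and $L_1$ only up to the weaker relations of thinness or quasi-alternation, not up to being \emph{alternating}. Ruling out a thin, non-alternating $L_1$ that admits a $\chi$-ribbon concordance into an alternating $L_2$ is exactly the content of \Cref{conj:alternatingribbon} and \Cref{conj:alternatingribbon2}, and I expect it to require a genuine strengthening of the embedding rigidity of \cite{mut} — which is why the statement is posed here only as a conjecture.
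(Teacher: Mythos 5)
The statement you were given is \Cref{conj:alternatingribbon2}, which the paper poses as a \emph{conjecture} and does not prove; so there is no authorial proof to measure your proposal against. Your write-up is, appropriately, not a proof but a diagnosis, and it is a fair one: you correctly reduce the link-level conjecture to the 3-manifold-level \Cref{conj:alternatingribbon} plus an additional ``link recovery'' step, and you correctly observe that this extra step is needed because a branched double cover does not determine the alternating link. This is precisely the reduction the paper records in the remark after the conjecture --- \Cref{conj:alternatingribbon2} would follow from \Cref{conj:alternatingribbon} together with \cite[Conjecture 1.4]{mut} (the statement that $\Sigma(L_0)\cong\Sigma(L_1)$ with $L_0$ alternating forces $L_1$ alternating), which remains open.

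Two remarks on the content of your proposal. First, your sketch of why $W=\Sigma(F)$ is a ribbon rational homology cobordism is correct and is the same step implicitly used in the paper's \Cref{thm:generalization2}, $(1)\Rightarrow(2)$. Second, your proposed attack via sharp definite fillings $X_\pm$ built from the two Tait surfaces of $D_2$, followed by a rigidity argument forcing the intersection lattices of $X_\pm$ onto the flow and cut lattices of a single planar graph, is a plausible strategy in the spirit of \cite{mut} and of the paper's use of \Cref{prop:rigid} in proving \Cref{thm:ribboncubiquity}; but, as you say, the available rigidity results only constrain $\Sigma(L_1)$ and do not force $L_1$ itself to be alternating, so this does not close the conjecture and you correctly stop short of claiming it does. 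What you do not mention, but the paper records as evidence, is the special case in which $L_2$ is $2$-bridge: there $\Sigma(L_2)$ is a lens space, the inclusion-induced maps $\pi_1(\Sigma(L_1))\hookrightarrow\pi_1(W)\twoheadleftarrow\pi_1(\Sigma(L_2))$ of \cite[Lemma 3.1]{gordon} show all three groups are finite cyclic, Geometrization then forces $\Sigma(L_1)$ to be a lens space, and Hodgson--Rubinstein \cite[Corollary 4.12]{hr} identifies $L_1$ as a $2$-bridge link. This is the one regime in which both conjectures are established, and it would have been worth including as a sanity check on any proposed general strategy.
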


\noindent
\Cref{conj:alternatingribbon2} would follow from \Cref{conj:alternatingribbon} and \cite[Conjecture 1.4]{mut}, which remains unsolved: if $\Sigma(L_0) \cong \Sigma(L_1)$ and $L_0$ is alternating, then so is $L_1$.

As evidence, both conjectures hold if $L_2$ a 2-bridge link, in which case $\Sigma(L_2)$ is a lens space.
In this case, the inclusions of the boundary components into $W$ induce maps $\pi_1(\Sigma(L_1)) \into \pi_1(W) \twoheadleftarrow \pi_1(\Sigma(L_2))$ \cite[Lemma 3.1]{gordon}.
Hence all groups under consideration are finite cyclic groups; hence $\Sigma(L_1)$ is a lens space, by the Geometrization Theorem; hence $L_1$ is a 2-bridge link, by a theorem of Hodgson-Rubinstein \cite[Corollary 4.12]{hr}.
We thank Marius Huber for describing this argument.


\subsection{Organization.}

Section \ref{sec:cubes} develops the basic results about 2-cube tiling lattices.
Section \ref{sec:graphs} develops the basic results about the lattice of flows on a finite graph, specializing to the case of a planar graph.
Section \ref{sec:cubegraphs} combines the results of the previous sections to characterize when the lattice of flows on a planar graph is a 2-cube tiling lattice, leading to the proof of \Cref{thm:links2}.
Section \ref{sec:cuberibbons} extends these results to prove \Cref{thm:generalization2}, assuming \Cref{thm:ribboncubiquity}.
Section \ref{sec:cubiquity} explains how the cubiquity condition issues from Heegaard Floer homology and establishes \Cref{thm:ribboncubiquity}.
Section \ref{sec:conj} describes some more examples related to this work, and also a strengthening of \Cref{conj:main}.

\subsection*{Acknowledgements}
We thank Vitalijs Brejevs for a helpful question, Marius Huber for helpful conversations, and the anonymous referee for a careful reading and corrections.  The main work in this article began in UQ\`{a}M during the CRM thematic program on Low-Dimensional Topology in 2019, and continued during the Braids semester in ICERM in 2022, and we thank these institutions for their hospitality.


\section{Lattices and cube tilings}
\label{sec:cubes}

\label{sec:tiling}
In this section, we prove \Cref{thm:tiling} and establish some basic properties of 2-cube tiling lattices and their \hajos\ matrices.
We begin with the more general notion of cubiquitous lattices.
Recall that a lattice $\Lambda=(A,q)$ is a finitely generated free abelian group $A$ together with a symmetric non-degenerate bilinear pairing $q: A \times A \to\rr$.  It is called an integral lattice if $q$ takes values in $\zz$. 
Any subgroup of $A$ gives rise to a sublattice with the restricted pairing.  A key example for us is the {\em Euclidean lattice} $\bZ^n$ of integer points in Euclidean space, with pairing given by the dot product and with orthonormal basis $e_1,\dots,e_n$.
As in the introduction, call a subset $S \subset \bZ^n$ \emph{cubiquitous} if it contains a point in every unit cube:
\[
S \cap (x + \{0,1\}^n) \ne \emptyset, \, \forall x \in \bZ^n.
\]
A lattice $\Lambda$ is cubiquitous if it admits a lattice embedding in $\zz^n$ with cubiquitous image. Such embeddings need not be unique.

\begin{prop}
\label{prop:cubiq}
The following conditions on a sublattice $\Lambda \subset \bZ^n$ are equivalent:
\begin{enumerate}
\item
$\Lambda$ is cubiquitous;
\item
every coset of $\Lambda$ is cubiquitous;
\item
every coset of $\Lambda$ contains a point of the unit cube $\{0,1\}^n$.
\end{enumerate}
\end{prop}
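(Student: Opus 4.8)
The plan is to show that each of the three conditions simply restates the single assertion $(\star)$: the restriction to $\{0,1\}^n$ of the quotient map $\bZ^n \onto \bZ^n/\Lambda$ is surjective; equivalently, $\{0,1\}^n$ meets every coset of $\Lambda$. Condition (3) is literally $(\star)$, so the only work is to see that (1) and (2) are too. First I would record the trivial observation that (2)$\implies$(1), since $\Lambda$ is one of its own cosets.

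The one device I would use is translation invariance of the family of unit cubes. For a coset $C$ of $\Lambda$ and a point $x \in \bZ^n$, translation by $-x$ is a bijection of $\bZ^n$ onto itself carrying $C$ onto the coset $C - x$ and the cube $x + \{0,1\}^n$ onto $\{0,1\}^n$; hence $C \cap (x + \{0,1\}^n) \ne \emptyset$ if and only if $(C - x) \cap \{0,1\}^n \ne \emptyset$. Two bookkeeping remarks then finish the argument. With $C = \Lambda$ fixed, the cosets $-x + \Lambda$ run over all cosets of $\Lambda$ as $x$ runs over $\bZ^n$, so applying the displayed equivalence identifies (1) with $(\star)$. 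With $x$ fixed, the cosets $C - x$ run over all cosets as $C$ does, so the same equivalence identifies (2) with $(\star)$. This closes the cycle, and in fact shows all three conditions are mutually equivalent to $(\star)$.

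I do not expect a genuine obstacle; the proposition is formal. The only point that needs care is that cubiquity quantifies over the whole family of unit cubes $\{x + \{0,1\}^n : x \in \bZ^n\}$ rather than over a single cube, so that translating any given cube back to $\{0,1\}^n$ always stays inside the family; once this is kept in mind the argument is routine bookkeeping.
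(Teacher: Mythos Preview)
Your argument is correct and uses the same translation device as the paper; the only difference is organizational. The paper proves the cycle (1)$\implies$(2)$\implies$(3)$\implies$(1) directly, while you reduce all three statements to the single condition $(\star)$, but both rest on the observation that $C \cap (x+\{0,1\}^n)\ne\emptyset$ iff $(C-x)\cap\{0,1\}^n\ne\emptyset$.
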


\begin{proof}
(1)$\implies$(2):
Suppose that $\Lambda$ is cubiquitous.
Take any coset $y + \Lambda$ and any unit cube $Q=x + \{0,1\}^n$, $x,y \in \bZ^n$.  The cube $Q'=x-y + \{0,1\}^n$ contains an element $z$ of $\Lambda$; then $y+z \in (y + \Lambda) \cap Q$.

(2)$\implies$(3): Immediate.

(3)$\implies$(1):  Let $Q=x + \{0,1\}^n$ be a unit cube.  By (3), there exists $z\in\{0,1\}^n$ in the same coset as $-x$, and then $x+z\in\Lambda\cap Q$.
\end{proof}

The {\em determinant} (a.k.a. the {\em discriminant}) of a lattice $\Lambda$ is the order of the finite group $\Lambda^*/\Lambda$, where $\Lambda^*=\Hom_\zz(\Lambda,\zz)$ denotes the dual lattice.
The determinant of a finite index sublattice of $\zz^n$ is the square of the order of the quotient $\zz^n/\Lambda$.
By \Cref{prop:cubiq}, $| \bZ^n / \Lambda | \le | \{0,1\}^n| = 2^n$ if $\Lambda$ is cubiquitous.
Hence a cubiquitous lattice of rank $n$ has determinant at most $4^n$. 
A cubiquitous sublattice $\Lambda$ of $\zz^n$ with maximal determinant $4^n$ has the property that it intersects each unit cube in a unique point.
It follows that the elements of $\Lambda$ are the centers of a tiling of $\rr^n$ by cubes of side length 2.
We therefore refer to these lattices as \emph{2-cube tiling lattices}.

One may construct such 2-cube tiling lattices recursively as follows.  There is a unique such lattice $2\zz\subset\zz$ in rank 1.  Given a 2-cube tiling lattice 
$$\Lambda_{n-1}\subset\zz^{n-1}=\{(x_1,\dots,x_n)\in\zz^n\,:\,x_1=0\},$$
we seek a 2-cube tiling lattice $\Lambda\subset\zz^n$ with $\Lambda\cap\zz^{n-1}=\Lambda_{n-1}$.
Such a lattice must contain a unique point in the cube $(1,0,\dots,0)+\{0,1\}^n$.
As $\Lambda \cap \{0,1\}^n = \{ 0 \}$, the point in question necessarily takes the form $(2,x_2,\dots,x_n)$ with each $x_i\in\{0,1\}$.
Any such point together with $\Lambda_{n-1}$ generates a cubiquitous lattice $\Lambda$.
It follows that for $\Lambda\subset\zz^n$ constructed in this way, a basis is given by the columns of a lower triangular matrix $H$ of rank $n$ with 2's on the diagonal and below-diagonal entries in $\{0,1\}$.
We refer to such a matrix as a {\em \hajos\ matrix}.
Given any \hajos\ matrix $H$, the columns of $H$ form a basis for a 2-cube tiling lattice $\Lambda_H$ in $\zz^n$.

It turns out that all 2-cube tiling lattices arise from the preceding construction:

\begin{thm}
\label{thm:tiling}
Let $\Lambda$ be a 2-cube tiling lattice in $\zz^n$.  Then up to isomorphism of $\zz^n$, $\Lambda=\Lambda_H$ for some \hajos\ matrix $H$.  If $H$ and $H'$ are \hajos\ matrices, then
$\Lambda_H=\Lambda_{H'}$ if and only if $H=H'$.
\end{thm}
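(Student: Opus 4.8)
The plan is to prove the theorem in two halves. The easier half is the uniqueness statement: if $H$ and $H'$ are Haj\'os matrices with $\Lambda_H = \Lambda_{H'}$, then $H = H'$. Here I would argue by induction on $n$. Both lattices contain exactly one point in the cube $(1,0,\dots,0) + \{0,1\}^n$, and since $\Lambda_H \cap \{0,1\}^n = \{0\}$ (as $0$ is the unique point of the 2-cube tiling lattice in the zero cube), that point is the first column of $H$, read off as $(2, h_{21}, \dots, h_{n1})$ with entries below the diagonal in $\{0,1\}$; similarly for $H'$. So the first columns agree. Then $\Lambda_H \cap \zz^{n-1}$ and $\Lambda_{H'} \cap \zz^{n-1}$ are equal 2-cube tiling lattices in $\zz^{n-1}$ whose Haj\'os matrices are the lower-right $(n-1)\times(n-1)$ blocks of $H$ and $H'$ respectively, and induction finishes the claim. (One small check: that $\Lambda_H \cap \zz^{n-1}$ is itself a 2-cube tiling lattice in $\zz^{n-1}$ with the truncated Haj\'os matrix as basis — this follows since $H$ is lower triangular, so the columns of the truncated block are exactly the columns of $H$ with first coordinate $0$, and these span $\Lambda_H \cap \zz^{n-1}$.)

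The substantive half is existence: every 2-cube tiling lattice $\Lambda \subset \zz^n$ equals $\Lambda_H$ for some Haj\'os matrix $H$, after applying an automorphism of $\zz^n$. This is where Haj\'os's theorem enters. Place a cube of side length $2$ centered at each point of $\Lambda$; by the discussion preceding the theorem these cubes tile $\rr^n$. Haj\'os's theorem (Minkowski's conjecture) says that in such a lattice tiling some pair of cubes shares a full facet, which means $\Lambda$ contains a vector $v$ that, after applying an orthogonal change of the orthonormal basis $e_1, \dots, e_n$ permuting and possibly sign-changing coordinates, equals $2 e_1$. (A shared facet between the cubes centered at $\lambda$ and $\lambda'$ forces $\lambda - \lambda'$ to be $\pm 2 e_i$ for some $i$; compose with the coordinate permutation and sign flip sending that to $2e_1$.) Now I would pass to the sublattice $\Lambda' = \Lambda \cap \zz^{n-1}$, where $\zz^{n-1} = \{x_1 = 0\}$, and show $\Lambda'$ is a 2-cube tiling lattice in $\zz^{n-1}$: it has rank $n-1$ since $\Lambda/\langle 2e_1\rangle$ maps injectively to $\zz^{n-1}$ under projection forgetting the first coordinate — wait, more carefully, $\Lambda \supset 2e_1\zz$ and $\Lambda/(2e_1\zz)$ projects isomorphically onto a full-rank sublattice of $\zz^{n-1}$, which must then be cubiquitous of maximal determinant because the index count $[\zz^n : \Lambda] = 2^n$ and $[\zz^{n-1} : \Lambda'] \le 2^{n-1}$ combine with $[\zz^n : \zz^{n-1} + \Lambda] = 2$ to force equality. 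By induction $\Lambda'$, after an automorphism of $\zz^{n-1}$ (extended to $\zz^n$ fixing $e_1$), equals $\Lambda_{H'}$ for a Haj\'os matrix $H'$ of size $n-1$. Finally, $\Lambda$ is generated by $\Lambda'$ together with one point in the cube $(1,0,\dots,0) + \{0,1\}^n$, which as in the recursive construction has the form $(2, x_2, \dots, x_n)$ with $x_i \in \{0,1\}$; so $\Lambda = \Lambda_H$ where $H$ has first column $(2, x_2, \dots, x_n)^T$ and lower-right block $H'$, and this $H$ is a Haj\'os matrix.

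The main obstacle is the inductive step's bookkeeping: one must verify carefully that the coordinate change supplied by Haj\'os's theorem can be chosen orthogonal (a signed permutation matrix, hence an automorphism of $\zz^n$ preserving the Euclidean structure), that the induced lattice $\Lambda \cap \zz^{n-1}$ genuinely is a maximal-determinant cubiquitous lattice in the hyperplane (the index computation above), and that the automorphism of $\zz^{n-1}$ produced by induction extends to one of $\zz^n$ fixing the $e_1$-direction so that it does not disturb the vector $2e_1$ already normalized. None of these is deep, but they are the points where a sloppy argument would break; everything else is the already-established recursive construction of $\Lambda_H$ from a Haj\'os matrix, run in reverse.
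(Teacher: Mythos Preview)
Your uniqueness argument is fine and in fact slightly slicker than the paper's: the paper isolates a lemma that the first nonzero entry of any element of $\Lambda_H$ is even, then shows that the only ``\hajos\ vectors'' in $\Lambda_H$ are the columns of $H$; your direct induction identifying $h_1$ as the unique lattice point in the cube $(1,0,\dots,0)+\{0,1\}^n$ and recursing on the lower-right block accomplishes the same thing more economically.

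The existence half, however, has a genuine gap. You define $\Lambda'=\Lambda\cap\{x_1=0\}$ and assert it is a 2-cube tiling lattice in $\bZ^{n-1}$, but this is false in general. Take $n=2$ and $\Lambda$ generated by $(2,0)$ and $(1,2)$: this is a 2-cube tiling lattice (it is the image of $\Lambda_H$ with $H=\left(\begin{smallmatrix}2&0\\1&2\end{smallmatrix}\right)$ under the coordinate swap), and $2e_1\in\Lambda$; but $\Lambda\cap\{x_1=0\}=4\bZ e_2$, which has index $4$ in $\bZ$, not $2$. The index computation you sketch breaks at the step ``$[\bZ^n:\bZ^{n-1}+\Lambda]=2$'': in the example $\bZ^{n-1}+\Lambda=\bZ^2$, so the index is $1$. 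The underlying issue is that $2e_1\in\Lambda$ does not force every element of $\Lambda$ to have even first coordinate---in the example $(1,2)\in\Lambda$. You also slip between intersection and projection in the same paragraph: ``$\Lambda/(2e_1\bZ)$ projects isomorphically onto a full-rank sublattice of $\bZ^{n-1}$'' is a statement about the \emph{projection} $\pi(\Lambda)$, not about $\Lambda\cap\{x_1=0\}$, and these differ in general.

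The paper's fix is to use projection, not intersection: it places the \hajos\ vector $2e_n$ at the \emph{end}, defines $\Lambda'$ as the image of $\Lambda$ under projection orthogonal to $e_n$, verifies that $\Lambda'$ is a 2-cube tiling lattice in $\bZ^{n-1}$ via a determinant comparison, applies induction, then \emph{lifts} the resulting \hajos\ basis back to $\Lambda$ (each lift acquiring an $n$-th coordinate which can be reduced modulo $2$ by adding multiples of $2e_n$). With the \hajos\ vector at the end, the lifted basis together with $2e_n$ is lower triangular automatically. Your ordering---\hajos\ vector first, lower-right block $H'$---would work only if the columns of $H'$ genuinely lived in $\Lambda\cap\{x_1=0\}$, which as shown they need not.
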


We break the proof into three lemmas which will be of individual use later on.

\begin{lem}
\label{lem:hajos}
Let $\Lambda\subset\zz^n$ be a 2-cube tiling lattice.  Then the image of $\Lambda$ under an automorphism of $\zz^n$ given by reordering the orthonormal basis elements is $\Lambda_H$ for some \hajos\ matrix $H$.
\end{lem}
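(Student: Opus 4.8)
The plan is to induct on $n$, peeling off one orthonormal basis vector at a time, with \hajos's theorem supplying the vector to peel off. The base case $n=1$ is immediate: a rank-one cubiquitous sublattice of $\zz$ has index at most $2$, and maximality of the determinant forces the index to be exactly $2$, so $\Lambda = 2\zz = \Lambda_{(2)}$. For the inductive step, let $\Lambda \subset \zz^n$ be a 2-cube tiling lattice, so that the side-$2$ cubes $p + [-1,1]^n$, $p \in \Lambda$, tile $\rr^n$. Applying \hajos's theorem to this lattice cube-tiling produces $p,q \in \Lambda$ whose cubes share a complete $(n-1)$-dimensional facet; two such cubes agree in all but one coordinate and differ there by exactly the side length, so $q-p = \pm 2e_i$ for some $i$ and hence $2e_i \in \Lambda$. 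Reordering the orthonormal basis, I may assume $i = n$.

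Next I would descend to dimension $n-1$ via the projection $\pi\colon \zz^n \to \zz^{n-1}$ that kills $e_n$, and verify that $\pi(\Lambda)$ is again a 2-cube tiling lattice. Cubiquity transfers cheaply: given $x \in \zz^{n-1}$, lift it to $\tilde x \in \zz^n$ with last coordinate $0$, use cubiquity of $\Lambda$ to find $z \in \Lambda \cap (\tilde x + \{0,1\}^n)$, and note $\pi(z) \in \pi(\Lambda) \cap (x + \{0,1\}^{n-1})$. For the index, recall that a 2-cube tiling lattice meets $\{0,1\}^n$ only in $0$, so $e_n \notin \Lambda$ while $2e_n \in \Lambda$; hence $\zz e_n \cap \Lambda = 2\zz e_n$, and the exact sequence
\[
0 \longto \zz e_n/(\zz e_n \cap \Lambda) \longto \zz^n/\Lambda \longto \zz^{n-1}/\pi(\Lambda) \longto 0
\]
gives $[\zz^{n-1}:\pi(\Lambda)] = 2^n/2 = 2^{n-1}$, the maximal value. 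Thus $\pi(\Lambda)$ is a cubiquitous sublattice of $\zz^{n-1}$ of full rank and maximal index, i.e. a 2-cube tiling lattice, and by the inductive hypothesis, after reordering $e_1,\dots,e_{n-1}$ we have $\pi(\Lambda) = \Lambda_{H'}$ for some $(n-1)\times(n-1)$ \hajos\ matrix $H'$.

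It remains to lift back up. Let $v_1,\dots,v_{n-1}$ be the columns of $H'$, a basis of $\pi(\Lambda)$; choose preimages $\tilde v_j \in \Lambda$ and modify each by an integer multiple of $2e_n$ so that its $e_n$-coordinate $r_j$ lies in $\{0,1\}$. Since $\zz e_n \cap \Lambda = 2\zz e_n$, the vectors $\tilde v_1,\dots,\tilde v_{n-1}, 2e_n$ form a basis of $\Lambda$, and in the reordered orthonormal basis the matrix $H$ having these as columns has the block form
\[
H = \begin{pmatrix} H' & 0 \\ r_1\ \cdots\ r_{n-1} & 2 \end{pmatrix},
\]
with each $r_j \in \{0,1\}$; this is lower triangular with all diagonal entries $2$ and all sub-diagonal entries in $\{0,1\}$, hence a \hajos\ matrix. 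Therefore $\Lambda = \Lambda_H$, completing the induction and the proof.

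The only deep ingredient here is \hajos's theorem, invoked as a black box; the remainder is bookkeeping. The point that needs care is that one must project along a coordinate $e_i$ with $2e_i \in \Lambda$ — along a general coordinate the projected lattice can fail to have index $2^{n-1}$, wrecking the induction — and \hajos's theorem is exactly what guarantees such a coordinate exists; one then has to check that this projection is genuinely a 2-cube tiling lattice rather than merely a cubiquitous one so that the inductive hypothesis applies.
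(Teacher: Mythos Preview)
Your proof is correct and follows essentially the same approach as the paper: induct on $n$, use \hajos's theorem to find $2e_i\in\Lambda$, reorder so $i=n$, project orthogonally to $e_n$, apply the inductive hypothesis, and lift back, adjusting the final coordinates into $\{0,1\}$ by multiples of $2e_n$. The only cosmetic differences are that the paper computes $[\zz^{n-1}:\pi(\Lambda)]$ via determinants of basis matrices rather than your short exact sequence, and performs the $\{0,1\}$ adjustment as a column operation at the end rather than during the lift; your explicit observation that $e_n\notin\Lambda$ (since $\Lambda\cap\{0,1\}^n=\{0\}$) is a detail the paper uses implicitly when extending $2e_n$ to a basis.
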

\begin{proof}
\hajos's theorem states that every lattice tiling of $\rr^n$ by cubes has a pair of cubes that share a codimension-1 face \cite{Hajos}.
It follows that $\Lambda$ contains $2e_i$ for some $i$; we apply a permutation of the orthonormal basis so that $h_n:=2e_n\in\Lambda$.
Complete $h_n$ to a basis $B$ for $\Lambda$, and let $M$ denote the matrix of the inclusion $\Lambda \into \zz^n$ with respect to the bases $B$ and $\{e_1,\dots,e_n\}$.
Let $\Lambda'\subset\zz^{n-1}=\{(x_1,\dots,x_n)\in\zz^n\,:\,x_n=0\}$ be the image of $\Lambda$ under projection orthogonal to $e_n$.
Let $B'$ denote the set of the projections of the elements of $B - \{h_n\}$ to $\Lambda'$.
Then $B'$ is a basis for $\Lambda'$.
Let $M'$ denote the matrix of the inclusion $\Lambda \into \zz^{n-1}$ with respect to the bases $B'$ and $\{e_1,\dots,e_{n-1}\}$.
We see that $\det M =2\det M'$, from which it follows that $\det\Lambda'=4^{n-1}$.
Also, cubiquity of $\Lambda'$ follows from that of $\Lambda$.
Hence $\Lambda'$ is a 2-cube tiling lattice.
By induction we may reorder the orthonormal basis of $\zz^{n-1}$ such that $\Lambda'$ has a basis (possibly different from $B'$) given by the columns of a \hajos\ matrix $H'$.
It follows that a basis for $\Lambda$ is given by the columns of a lower triangular matrix $H''$ with 2's on the diagonal, and whose top-left $(n-1) \times (n-1)$ submatrix equals $H'$.
By adding appropriate multiples of the last column to the remaining columns we may convert $H''$ into a \hajos\ matrix $H$ with $\Lambda= \Lambda_{H''} = \Lambda_H$.
\end{proof}

\begin{lem}
\label{lem:evenentry}
Let $\Lambda\subset\zz^n$ be a 2-cube tiling lattice, and let $x\in\Lambda$.  Then the first nonzero entry of $x$ is even.
\end{lem}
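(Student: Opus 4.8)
The plan is to deduce the statement from the Hájos normal form rather than argue from scratch. By \Cref{lem:hajos}, after reordering the orthonormal basis $e_1,\dots,e_n$ we may assume $\Lambda=\Lambda_H$ for some Hájos matrix $H$, with columns $h_1,\dots,h_n$; since ``first nonzero entry'' is read with respect to this fixed ordering, it suffices to check the claim for a lattice presented in this triangular form, which I will do by a direct computation with the basis $h_1,\dots,h_n$.

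Concretely, I would fix a nonzero $x\in\Lambda$, let $j$ be the index of its first nonzero coordinate, and write $x=\sum_{k=1}^n c_k h_k$ with $c_k\in\zz$. Because $H$ is lower triangular with every diagonal entry equal to $2$, the $i$-th coordinate of $h_k$ is zero for $k>i$ and equals $2$ for $k=i$; hence the $i$-th coordinate of $x$ is $\sum_{k\le i} c_k H_{ik}$, a quantity depending only on $c_1,\dots,c_i$. A short induction on $i$ then gives $c_i=0$ for every $i<j$: if $c_1=\dots=c_{i-1}=0$, the $i$-th coordinate of $x$ reduces to $c_i H_{ii}=2c_i$, and this vanishes because $i<j$, forcing $c_i=0$. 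Consequently the $j$-th coordinate of $x$ equals $\sum_{k\le j} c_k H_{jk}=c_j H_{jj}=2c_j$, which is even (and nonzero, so in fact $c_j\ne 0$). This proves the lemma.

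With \Cref{lem:hajos} in hand the computation above is immediate, so I do not anticipate a genuine obstacle; the one point to be careful about is to reduce to the triangular presentation $\Lambda=\Lambda_H$ before running the coordinate bookkeeping, since a permutation of the $e_i$ can change which entry counts as ``first''. One could instead try a direct induction on $n$, peeling off the first coordinate and invoking that $\Lambda\cap\{x_1=0\}$ is again a $2$-cube tiling lattice; but the base case --- that the first coordinate of every element of $\Lambda$ is even --- comes down to exactly the same triangular structure, so nothing is really saved by avoiding \Cref{lem:hajos}.
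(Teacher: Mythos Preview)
Your argument is correct and is essentially the paper's own proof: write $x$ in the \hajos\ basis $h_1,\dots,h_n$ and use the lower-triangular shape of $H$ (with $2$'s on the diagonal) to see that the leading nonzero entry of $x$ equals twice the leading nonzero coefficient. The paper's version is simply more compressed---it takes $j$ to be the first index with $a_j\ne 0$ and records $x=(0,\dots,0,2a_j,\dots)$ in one line---but the content is identical.
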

\begin{proof}
Let $H$ be a \hajos\ matrix for $\Lambda$, and let $h_1,\dots,h_n$ be the columns of $H$.  Write $x=\sum a_i h_i$, and suppose that $a_j$ is the first nonzero coefficient.  Then
$
x=(0,\dots,0,2a_j,\dots).
$
\end{proof}

We refer to a vector of the form $(0,\dots,0,2,x_{k+1},\dots,x_n)\in\zz^n$, with each $x_i\in\{0,1\}$, as a {\em \hajos\ vector}.
\begin{lem}
\label{lem:hajos=}
Let $H$ be a \hajos\ matrix of rank $n$.  Then the only \hajos\ vectors in $\Lambda_H$ are the columns of $H$.
If $H'$ is another \hajos\ matrix of rank $n$, then
$$\Lambda_H=\Lambda_{H'}\iff H=H'.$$
\end{lem}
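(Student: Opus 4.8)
The plan is to establish the two assertions in turn, both by bootstrapping off \Cref{lem:evenentry}. The first observation is that every column $h_k$ of a \hajos\ matrix $H$ is itself a \hajos\ vector: since $H$ is lower triangular of rank $n$ with $2$'s on the diagonal and below-diagonal entries in $\{0,1\}$, the $k$-th column has the shape $(0,\dots,0,2,x_{k+1},\dots,x_n)$ with each $x_i\in\{0,1\}$ and the $2$ in position $k$. Thus the columns of $H$ are $n$ \hajos\ vectors in $\Lambda_H$ whose \emph{leading positions} (the index of the first nonzero entry) are exactly $1,\dots,n$, each occurring once.

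To prove that these are the only \hajos\ vectors in $\Lambda_H$, let $v\in\Lambda_H$ be an arbitrary one, with leading position $k$, so $v=(0,\dots,0,2,x_{k+1},\dots,x_n)$ with $x_i\in\{0,1\}$. Write $v=\sum_i a_i h_i$ and let $j$ be the least index with $a_j\ne 0$; since $h_i$ has leading position $i$ for each $i$, the argument of \Cref{lem:evenentry} shows that the first nonzero entry of $v$ is $2a_j$ in position $j$. Comparing with the explicit form of $v$ forces $j=k$ and $a_k=1$. Now set $w:=v-h_k=\sum_{i>k}a_i h_i\in\Lambda_H$. In positions $1,\dots,k$ the vector $w$ vanishes, and for $i>k$ its $i$-th entry is $x_i-(h_k)_i$ with $x_i,(h_k)_i\in\{0,1\}$, hence lies in $\{-1,0,1\}$. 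If $w$ were nonzero, its first nonzero entry would be $\pm 1$, contradicting \Cref{lem:evenentry}. So $w=0$, i.e.\ $v=h_k$, and the \hajos\ vectors of $\Lambda_H$ are precisely the columns of $H$.

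For the equality statement, the implication $H=H'\implies\Lambda_H=\Lambda_{H'}$ is immediate. Conversely, if $\Lambda_H=\Lambda_{H'}$ with $H'$ another \hajos\ matrix of rank $n$, then by the first part the set of columns of $H$ and the set of columns of $H'$ both coincide with the set of \hajos\ vectors in the common lattice. Within a \hajos\ matrix the $k$-th column is the unique column with leading position $k$, so each of $H$ and $H'$ is recovered from this common set by listing its elements in order of increasing leading position; hence $h_k=h'_k$ for all $k$ and $H=H'$.

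There is no substantial obstacle here — the lemma is a short consequence of \Cref{lem:evenentry}. The one point worth isolating is that the entries of $v-h_k$ are bounded in absolute value by $1$, which is exactly what allows \Cref{lem:evenentry} to be invoked a second time and collapse $v-h_k$ to $0$; the rest is bookkeeping with leading positions.
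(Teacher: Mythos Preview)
Your proof is correct and follows essentially the same approach as the paper: subtract the column $h_k$ with matching leading position from the given \hajos\ vector, observe that the difference has all entries in $\{-1,0,1\}$, and invoke \Cref{lem:evenentry} to conclude it vanishes. The paper's version is slightly more streamlined in that it skips the intermediate step of writing $v=\sum_i a_i h_i$ and deducing $a_k=1$ --- one can pass directly to $v-h_k$ and read off the entry bound from the explicit forms of $v$ and $h_k$ --- but the core idea is identical.
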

\begin{proof}
Let $h\in\Lambda_H$ be a \hajos\ vector whose $j$-th entry is 2, and let $h_j$ be the $j$-th column of $H$.  Then $h-h_j$ is an element of $\Lambda_H$ with entries in $\{-1,0,1\}$, so by \Cref{lem:evenentry} it must be zero.  The last claim of the lemma follows immediately: if $\Lambda_H=\Lambda_{H'}$, then the columns of $H'$ are \hajos\ vectors in $\Lambda_H$.
\end{proof}

We therefore refer to the columns of $H$ as the {\em \hajos\ basis} of $\Lambda_H$.

\begin{proof}[Proof of \Cref{thm:tiling}]
This follows from Lemmas \ref{lem:hajos} and \ref{lem:hajos=}.
\end{proof}

\noindent
{\bf Remark.}
Different \hajos\ matrices $H$ and $H'$ may give rise to isomorphic lattices $\Lambda_H$ and $\Lambda_{H'}$.  This is a consequence of the fact that various choices are made in the construction of $H$ as described in the proof of \Cref{lem:hajos}: at each stage in the process one considers a 2-cube tiling lattice $\Lambda'$ and chooses a vector of the form $2e_i\in\Lambda'$ which gives rise to one of the columns of the \hajos\ matrix.  There may be more than one such vector to choose from at each stage, and in addition one could replace $e_i$ by $-e_i$ and use $-2e_i$ instead of $2e_i$.

The smallest example of this phenomenon is given by the \hajos\ matrices
\[
H=\begin{bmatrix} 2& 0& 0\\ 1& 2& 0\\ 0& 1& 2\end{bmatrix}\mbox{ and }H'=\begin{bmatrix} 2& 0& 0\\ 1& 2& 0\\ 1& 1& 2\end{bmatrix}.
\]
One obtains $H'$ from $H$ (or vice versa) by multiplying the first row and first column by $-1$, and then adding appropriate multiples of the second and third columns to the first so that the entries below the diagonal are in $\{0,1\}$.  This corresponds to a sign choice at the last step of the construction from the proof of \Cref{lem:hajos}.

\begin{lem}
\label{lem:square4}
Let $\Lambda\subset\zz^n$ be a 2-cube tiling lattice, and let $y\in\Lambda$ with $y\cdot y=4$.  Then there exists a \hajos\ matrix $H$ and an isomorphism from $\Lambda$ to $\Lambda_{H}$ taking $y$ to the last column of $H$.
\end{lem}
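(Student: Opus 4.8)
The plan is to use \Cref{lem:evenentry} to pin down $y$ up to a coordinate change, and then to inspect the proof of \Cref{lem:hajos}, observing that it builds a \hajos\ matrix whose last column is whatever seed vector $2e_i$ one starts from.

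First I would note that an integer vector with $y \cdot y = 4$ has either a single nonzero entry equal to $\pm 2$, or exactly four nonzero entries each equal to $\pm 1$. The second case has an odd first nonzero entry, which is excluded by \Cref{lem:evenentry}, so $y = \pm 2 e_i$ for some $i$. A signed permutation of the orthonormal basis is an automorphism of $\zz^n$ carrying $\Lambda$ to another 2-cube tiling lattice and, for an appropriate choice, carrying $y$ to $2 e_n$. So I may assume from the outset that $y = 2 e_n \in \Lambda$.

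Next I would re-run the proof of \Cref{lem:hajos} starting from the seed vector $h_n := 2 e_n = y$, which is legitimate precisely because the sole purpose of \hajos's theorem in that proof is to exhibit some vector of the form $2 e_i$ in the lattice, and here one is already given. Completing $h_n$ to a basis of $\Lambda$ and projecting orthogonally to $e_n$ produces, as in that proof, a 2-cube tiling lattice $\Lambda' \subset \{x_n = 0\} \cong \zz^{n-1}$; applying \Cref{lem:hajos} to $\Lambda'$ and then lifting, one gets a basis of $\Lambda$ (after a reordering of $e_1,\dots,e_{n-1}$ that extends to $\zz^n$ fixing $e_n$) consisting of the columns of a lower triangular matrix $H''$ with $2$'s on the diagonal whose last column is $2e_n$. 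Converting $H''$ to a \hajos\ matrix $H$ only adds multiples of the last column to the others, so the last column of $H$ remains $2 e_n$, and $\Lambda = \Lambda_H$ after this reordering. The base case $n = 1$ is immediate ($\Lambda = 2\zz$, $H = [2]$). Composing the coordinate changes used along the way gives the desired isomorphism $\Lambda \to \Lambda_H$ taking $y$ to the last column of $H$.

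I don't anticipate a real obstacle: the only thing to be careful about is that every automorphism of $\zz^n$ invoked after the initial normalization of $y$ fixes $e_n$, so that the designated last column is never moved. This is clear from the proof of \Cref{lem:hajos}, where the only reordering performed after the seed vector is chosen acts on the hyperplane $\{x_n = 0\}$.
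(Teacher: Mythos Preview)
Your proof is correct and shares the paper's core idea: use \Cref{lem:evenentry} to force $y=\pm 2e_i$, then exhibit a \hajos\ matrix having this vector as its last column. The executions diverge slightly. The paper first applies \Cref{lem:hajos} to write $\Lambda=\Lambda_{H'}$, then invokes \Cref{lem:hajos=} to recognize $\pm y=2e_i$ as the $i$-th column of $H'$, and finishes by transposing the $i$-th and $n$-th rows and columns (with a possible sign change on the last row and column). You instead normalize $y$ to $2e_n$ by a signed coordinate permutation up front and then re-run the construction inside the proof of \Cref{lem:hajos} with $h_n=2e_n$ as the chosen seed, so that the last column of the resulting \hajos\ matrix is $2e_n$ by design. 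Your route dispenses with \Cref{lem:hajos=} and makes it transparent that the output is genuinely a \hajos\ matrix, at the cost of appealing to the \emph{proof} rather than the statement of \Cref{lem:hajos}; the paper's route is terser but leaves to the reader the verification that the transposed matrix has the required shape.
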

\begin{proof}
Let $H'$ be a \hajos\ matrix with $\Lambda=\Lambda_{H'}$.  By Lemmas \ref{lem:evenentry} and \ref{lem:hajos=}, it follows that $\pm y=2e_i$ is a column of $H'$.  
We obtain $H$ from $H'$ by transposing the $i$-th row and column with the $n$-th row and column, and then if necessary changing the sign of the last row and column of the result.
\end{proof}

\subsection{Special elements.}
\label{ss:specialhajos}
We call an element $z$ of a lattice $\Lambda$ \emph{simple} if it is not possible to write $z=x+y$ for nonzero elements $x,y$ of $\Lambda$ with $x\cdot y>0$.
We call it \emph{irreducible} (also known as \emph{indecomposable}, or a \emph{strict Voronoi vector}), if it is not possible to write $z=x+y$ for nonzero elements $x,y$ of $\Lambda$ with $x\cdot y\ge0$.
We call it \emph{rigid} if it is simple and has a unique decomposition as a sum of (two) nontrivial orthogonal elements.
For more on irreducible elements see \cite{CS,GR}.

\begin{lem}
\label{lem:hajosirred}
\hajos\ vectors in a 2-cube tiling lattice $\Lambda_H$ are irreducible.
If $h_j,h_k\in\Lambda_H$ are distinct \hajos\ vectors, then $h_j-h_k$ is irreducible if $h_j\cdot h_k > 0$ and rigid if $h_j \cdot h_k = 0$.
\end{lem}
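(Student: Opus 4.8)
The plan is to fix a Hajós matrix $H$ with columns $h_1,\dots,h_n$ (so $h_i=2e_i+\sum_{l>i}H_{li}e_l$ with $H_{li}\in\{0,1\}$) and argue entirely inside $\Lambda_H\subset\zz^n$. By \Cref{lem:hajos=} the Hajós vectors of $\Lambda_H$ are precisely the $h_i$, so for the second assertion I may take the two vectors to be distinct columns $h_j,h_k$ with $j<k$; since irreducibility and rigidity are invariant under $z\mapsto-z$ and $h_j-h_k=-(h_k-h_j)$, this loses no generality. A one-line computation gives $h_j\cdot h_k=2H_{kj}+\sum_{l>k}H_{lj}H_{lk}\ge0$, so the hypotheses $h_j\cdot h_k>0$ and $h_j\cdot h_k=0$ are exhaustive, the latter amounting to $H_{kj}=0$ and $H_{lj}H_{lk}=0$ for all $l>k$. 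I will use two facts throughout: every nonzero vector of $\Lambda_H$ has even leading coordinate, hence squared length $\ge4$ (\Cref{lem:evenentry}); and for any identity $z=x+y$ one has $x\cdot y=\sum_l x_l(z_l-x_l)$, a sum whose $l$-th term is at most $\lfloor z_l^2/4\rfloor$ and is $\le0$ as soon as $x_l$ is even and $z_l\in\{0,\pm2\}$. Write $\mathrm{lead}(v)$ for the index of the first nonzero coordinate of $v$.

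The core is the following claim, to be proved by inspecting the first few nonzero coordinates of a hypothetical decomposition $z=x+y$ with $x,y\in\Lambda_H\setminus\{0\}$ and $x\cdot y\ge0$: if $z=h_i$, no such decomposition exists; if $z=h_j-h_k$ with $j<k$, then necessarily $\{x,y\}=\{h_j,-h_k\}$, which in turn forces $h_j\cdot h_k=0$ and $x\cdot y=0$. Writing $p=i$ (resp.\ $p=j$) for $\mathrm{lead}(z)$, one has $z_p=2$, $z_l=0$ for $l<p$, and $z_l\in\{-1,0,1\}$ for $l>p$ with the single exception of $z_k=-2$ (present only in the second case, and only when $H_{kj}=0$). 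Hence $x\cdot y\ge0$ together with the per-coordinate bounds gives $\sum_{l<p}x_l^2\le2$, and \Cref{lem:evenentry} then forces all coordinates of $x$, and likewise of $y$, below $p$ to vanish. Comparing $p$-th coordinates, $x_p+y_p=2$ with both values even where nonzero, which after a possible swap of $x$ and $y$ yields $x_p=2$ and $\mathrm{lead}(y)>p$. Setting $q=\mathrm{lead}(y)$: if $q$ is not the exceptional coordinate $k$, then $x_qy_q\le-2$ while the remaining terms sum to $\le1$, so $x\cdot y<0$, a contradiction; if $q=k$, then $z_k=-2$ (the alternative $z_k=-1$ again forces $x\cdot y<0$), and either $y_k\ne-2$, forcing $x_ky_k\le-8$ and $x\cdot y<0$, or $y_k=-2$.

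In this last case $y=-h_k+w$ with $w:=\sum_{i>k}b_ih_i\in\Lambda_H$ supported on coordinates $>k$, and $x=h_j-w$. Using $H_{kj}=0$, expansion collapses to
\[
x\cdot y \;=\; -\sum_{l>k}(w_l-H_{lj})(w_l-H_{lk}),
\]
and since $H_{lj},H_{lk}\in\{0,1\}$ no integer lies strictly between them, so every summand is $\ge0$; with $x\cdot y\ge0$ this forces $x\cdot y=0$ and each $w_l\in\{H_{lj},H_{lk}\}\subseteq\{0,1\}$. But then if $w\ne0$ its leading coordinate would be both even and nonzero, a contradiction; so $w=0$, giving $x=h_j$, $y=-h_k$, and $x\cdot y=-h_j\cdot h_k$, whence $h_j\cdot h_k=0$. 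The lemma now drops out: Hajós vectors admit no decomposition with $x\cdot y\ge0$ and so are irreducible; when $h_j\cdot h_k>0$ the claim shows $h_j-h_k$ admits none either, so it too is irreducible; and when $h_j\cdot h_k=0$ the only decomposition of $h_j-h_k$ with $x\cdot y\ge0$ is the orthogonal pair $\{h_j,-h_k\}$, so $h_j-h_k$ is simple with a unique nontrivial orthogonal decomposition, i.e.\ rigid. The one genuinely delicate point is the case $q=k$, $y_k=-2$: there the naive coordinate estimates run out, and one must pass to the tail $w$ and invoke the parity constraint of \Cref{lem:evenentry} a second time, now applied to $w$ itself.
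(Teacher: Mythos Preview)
Your proof is correct and follows the same strategy as the paper's --- coordinate-wise bounds on $x_l y_l$ combined with repeated appeals to \Cref{lem:evenentry} --- though your endgame (writing $y=-h_k+w$ and using the closed formula $x\cdot y=-\sum_{l>k}(w_l-H_{lj})(w_l-H_{lk})$) is a slightly cleaner variant of the paper's disjoint-support argument. One small elision worth making explicit: at ``yields $x_p=2$,'' you should note that if both $x_p$ and $y_p$ were nonzero (hence even) then $x_p y_p\le -8$, which with the remaining terms summing to at most $1$ gives $x\cdot y<0$; this is what forces one of them to vanish.
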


\begin{proof}
Let $z = (z_1,\dots,z_n)\in\Lambda_H\subset\zz^n$ be a \hajos\ vector or the difference of two distinct \hajos\ vectors.  By \Cref{lem:hajos=}, one or two entries of $z$ have absolute value 2, and all other entries have absolute value 0 or 1.  Suppose that $z = x+y$ with $x,y \in \Lambda_H$.
Write $x = (x_1,\dots,x_n)$ and $y=(y_1,\dots,y_n)$.
If $|z_i|= 2$, then 
\[
x_i y_i \,
\begin{cases}
= 1,& \textup{if } |x_i|=|y_i|=1;\\
= 0,& \textup{if } \{|x_i|,|y_i|\}=\{0,2\}; \textup{and} \\
\le -1,& \textup{otherwise}.\end{cases}\]
If instead $|z_i|< 2$, then 
\[
x_i y_i \,
\begin{cases}
= 0& \textup{if } \{x_i,y_i\}=\{0,z_i\};  \\
=-1& \textup{if } \{x_i,y_i\}=\{1,-1\}; \textup{and} \\
\le -2& \textup{otherwise}. \end{cases}
\]

Suppose that $z = h_k$ is a \hajos\ vector.
We see that in order to have $x\cdot y\ge0$ we must have $\{|x_k|,|y_k|\}=\{0,2\}$ or $\{1,1\}$ and all other entries of $x$ and $y$
have absolute value at most one.
Hence one of $x$ and $y$ has no nonzero even entry, so \Cref{lem:evenentry} implies it is 0.
It follows that \hajos\ vectors are irreducible.

Suppose instead that $z=h_j-h_k$ is a difference of distinct \hajos\ vectors.
Without loss of generality, $j < k$.
Then $z_j = 2$, $z_k \in \{-1,-2\}$, and all other entries $z_i$ are in $\{-1,0,1\}$.

If $z_k = -1$, then $h_j \cdot h_k > 0$ and the previous argument shows that $z$ is irreducible.
This gives one of the desired outcomes.

We henceforth assume that $z_k = -2$ and establish the other outcome.
A similar application of  \Cref{lem:evenentry} in this case shows that $x\cdot y\ge0$ is possible only if $\{|x_j|,|y_j|\}=\{|x_k|,|y_k|\}=\{0,2\}$ and $\{x_i,y_i\}=\{0,z_i\}$ for $i\notin\{j,k\}$, showing that in fact $x$ and $y$ have disjoint support and $x\cdot y=0$.
We may assume without loss of generality, again using \Cref{lem:evenentry}, that $x_j = 2$ and $y_k = -2$.  Moreover these are the first nonzero entries of both $x$ and $y$.

Write $x = \sum_i a_i h_i$ in terms of the columns of the \hajos\ matrix $H$.
The vectors $x$ and $h_j$ agree in every coordinate up through the $k$-th.
Since $H$ is lower triangular with non-zero diagonal, it follows that $a_j = 1$ and otherwise $a_i = 0$ for $i \le k$.
Thus, we have $x = h_j + \sum_{i > k} a_i h_i$.
Similarly, writing $y$ in the \hajos\ basis, its coefficient on $h_k$ is $-1$ and its coefficient on $h_i$ is $0$ for $i < k$.
Substituting these expressions for $x$ and $y$ in $h_j - h_k = x+y$ yields $y = -h_k - \sum_{i>k} a_i h_i$.
If there exists a value $i > k$ for which $a_i \ne 0$, then let $m$ denote the smallest such.
The $m$-th entries of $x$ and $h_j$ are in $\{-1,0,1\}$ and differ by $2a_m \ne 0$, and the same applies to the $m$-th entries of $y$ and $h_k$.
It follows that all entries in question are $\pm 1$; but then $m \in \supp(x) \cap \supp(y)=\emptyset$, a contradiction.
Consequently no such value $m$ exists, and we infer that $x = h_j$ and $y = -h_k$.
Hence $h_j \cdot h_k = 0$ and $h_j - h_k$ is rigid, as required.
\end{proof}

\subsection{Indecomposable summands.}
\label{ss:indecomp}
A lattice $\Lambda$ is {\em indecomposable} if any orthogonal direct sum decomposition $\Lambda = \Lambda_1 \oplus \Lambda_2$ implies that $\Lambda_1 = 0$ or $\Lambda_2 = 0$.
Eichler proved that every lattice admits an orthogonal direct sum decomposition into indecomposable lattices, unique up to order of factors \cite[Theorem II.6.4]{mh}.
If $B$ is a basis of $\Lambda$ consisting of irreducible elements, then we can describe this decomposition in the following way.
Put an equivalence relation $\sim$ on $B$ by declaring that $b \sim b'$ if there exists a sequence of elements $b = b_1, \dots, b_k = b'$ in $B$ such that $b_i \cdot b_{i+1} \ne 0$ for all $i = 1, \dots, k-1$.
Then the equivalence classes of $B$ under $\sim$ generate the indecomposable summands of $\Lambda$.
For a proof, see \cite[Section 3.1]{lens}.

\begin{cor}
\label{cor:cubedecompose}
If $\Lambda = \Lambda_1 \oplus \Lambda_2$, then $\Lambda$ is a 2-cube tiling lattice if and only if both of $\Lambda_1$ and $\Lambda_2$ are.
Hence the indecomposable summands of a 2-cube tiling lattice are 2-cube tiling lattices. 
\end{cor}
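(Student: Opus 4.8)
The plan is to reduce the statement to a determinant-and-rank bookkeeping argument, invoking Hajós's theorem only implicitly through the already-established characterization of 2-cube tiling lattices. Suppose first that $\Lambda = \Lambda_1 \oplus \Lambda_2$ is a 2-cube tiling lattice in $\zz^n$, so $\rk(\Lambda)=n$ and $\det(\Lambda)=4^n$. A 2-cube tiling lattice is by definition a cubiquitous lattice of maximal determinant, so $\Lambda$ is cubiquitous; I would first check that an orthogonal direct summand of a cubiquitous lattice is again cubiquitous. This follows because $\Lambda$, being cubiquitous of rank $n$, meets every coset of $\Lambda$ in $\zz^n$ in a point of $\{0,1\}^n$ (\Cref{prop:cubiq}), and we can arrange coordinates so that $\zz^n = \zz^{n_1}\oplus\zz^{n_2}$ with $\Lambda_i\subset\zz^{n_i}$: project the unit-cube representatives, using that $\{0,1\}^n = \{0,1\}^{n_1}\times\{0,1\}^{n_2}$ and that cosets of $\Lambda$ split as products of cosets of $\Lambda_1$ and $\Lambda_2$. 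Hence $\Lambda_i$ is a cubiquitous sublattice of $\zz^{n_i}$, giving $\det(\Lambda_i)\le 4^{n_i}$. But $\det(\Lambda)=\det(\Lambda_1)\det(\Lambda_2)$ and $n_1+n_2=n$, so $4^n = \det(\Lambda_1)\det(\Lambda_2)\le 4^{n_1}4^{n_2}=4^n$ forces equality $\det(\Lambda_i)=4^{n_i}$ for each $i$; thus each $\Lambda_i$ is cubiquitous of maximal determinant, i.e.\ a 2-cube tiling lattice.

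For the converse, suppose $\Lambda_1$ and $\Lambda_2$ are 2-cube tiling lattices, say $\Lambda_i\subset\zz^{n_i}$ with $\det(\Lambda_i)=4^{n_i}$. Set $n=n_1+n_2$ and regard $\Lambda = \Lambda_1\oplus\Lambda_2 \subset \zz^{n_1}\oplus\zz^{n_2}=\zz^n$. Then $\rk(\Lambda)=n$ and $\det(\Lambda)=\det(\Lambda_1)\det(\Lambda_2)=4^n$. It remains to see $\Lambda$ is cubiquitous, for then it is automatically a 2-cube tiling lattice by the maximal-determinant characterization. Given a unit cube $x+\{0,1\}^n$ with $x=(x^{(1)},x^{(2)})$, cubiquity of $\Lambda_i$ produces $z^{(i)}\in\Lambda_i\cap(x^{(i)}+\{0,1\}^{n_i})$, and then $(z^{(1)},z^{(2)})\in\Lambda\cap(x+\{0,1\}^n)$ since the product of the two sub-cubes is exactly $x+\{0,1\}^n$. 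This proves the equivalence. The final sentence of the corollary is then immediate by induction on the number of indecomposable summands, using Eichler's decomposition theorem quoted above: write $\Lambda=\Lambda_1\oplus\cdots\oplus\Lambda_k$ into indecomposables, peel off one summand at a time, and apply the equivalence just proved at each stage.

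I do not expect any serious obstacle here; the only point demanding a little care is the bookkeeping that the unit cube $\{0,1\}^n$ factors compatibly with the orthogonal splitting of coordinates, i.e.\ that an orthogonal direct sum decomposition of a sublattice of $\zz^n$ can be realized by an actual splitting of the ambient orthonormal basis. This last fact is a standard consequence of the $\zz^n$-pairing being positive definite and the summands being orthogonal: the $\zz$-span of $\Lambda_1$ and the $\zz$-span of $\Lambda_2$ sit in complementary coordinate subspaces after a suitable reordering of $e_1,\dots,e_n$, because $\Lambda_1^\perp\cap\zz^n$ is a primitive sublattice containing $\Lambda_2$ of the complementary rank. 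If one wishes to avoid even this mild argument, one can instead run the forward direction through \hajos\ matrices: by \Cref{thm:tiling} write $\Lambda=\Lambda_H$, observe that the equivalence classes under $\sim$ of the \hajos\ basis (which consists of irreducible vectors by \Cref{lem:hajosirred}) generate the indecomposable summands, and note that each such class spans a lattice which, after reordering coordinates, is again defined by a \hajos\ matrix and hence a 2-cube tiling lattice. Either route is short; I would present the determinant argument as the main line since it makes the "if and only if" transparent and does not require re-deriving \Cref{thm:tiling}.
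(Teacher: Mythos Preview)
Your converse direction is fine and matches the paper.  The forward direction, however, has a genuine gap in the ``main line'' determinant argument: the assertion that an orthogonal splitting $\Lambda=\Lambda_1\oplus\Lambda_2$ of a full-rank sublattice of $\zz^n$ can always be realized by a splitting of the ambient orthonormal basis is false in general.  For instance, $\Lambda_1=\zz(3,4)$ and $\Lambda_2=\zz(4,-3)$ are orthogonal rank-one sublattices of $\zz^2$ whose direct sum has full rank, yet neither lies in a coordinate axis.  Your justification (``$\Lambda_1^\perp\cap\zz^n$ is a primitive sublattice containing $\Lambda_2$ of the complementary rank'') only yields a primitive sublattice of the right rank, not a coordinate $\zz^{n_2}$; in the example, $\Lambda_1^\perp\cap\zz^2=\zz(4,-3)$.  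Without the coordinate splitting you have no embedding $\Lambda_i\hookrightarrow\zz^{n_i}$ to which to apply the cubiquity-plus-determinant bound.

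The claim \emph{is} true for 2-cube tiling lattices, but establishing it uses the \hajos\ structure---which is exactly your alternative route and exactly what the paper does.  The paper's argument: the \hajos\ basis consists of irreducible vectors (\Cref{lem:hajosirred}), so by the paragraph on indecomposable summands any orthogonal splitting $\Lambda=\Lambda_1\oplus\Lambda_2$ partitions the columns of $H$ into two mutually orthogonal sets $B_1,B_2$; reordering rows and columns accordingly puts $H$ into block-diagonal form $H_1\oplus H_2$ with each $H_i$ a \hajos\ matrix, whence $\Lambda_i\cong\Lambda_{H_i}$.  So your ``if one wishes to avoid even this mild argument'' alternative is not optional---it is the argument, and you should present it as the main line.
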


\begin{proof}
Let $\Lambda$ be a 2-cube tiling lattice with \hajos\ matrix $H$.
By the preceding paragraph and \Cref{lem:hajosirred}, a decomposition $\Lambda = \Lambda_1 \oplus \Lambda_2$ corresponds to a partition $B_1 \sqcup B_2$ of the columns of $H$ such that $h_1 \cdot h_2 = 0$ for all $h_i \in B_i$, $i=1,2$.
Permute the columns of $H$ so that those of $B_1$ precede all those of $B_2$, preserving the order within each $B_i$, and apply the same permutation to its rows.
Doing so puts $H$ into block diagonal form $H_1 \oplus H_2$, where $H_1$ and $H_2$ are \hajos\ matrices, and $\Lambda_i \cong \Lambda_{H_i}$, $i=1,2$.

For the other implication note that cubiquity is preserved under orthogonal direct sum, and the additivity properties of rank and determinant.
\end{proof}


\section{Lattices and flows on graphs}
\label{sec:graphs}

In this section we study the lattice of integer-valued flows on a finite graph, with a focus on planar graphs.
More details and relevant graph theory appear in \cite{bdlhn,GR,mut}.  

\subsection{Flows and cuts on finite graphs.}

Let $G$ be a finite graph, and fix an arbitrary orientation of the edges of $G$.
Doing so gives $G$ the structure of a CW complex and produces an affiliated cellular chain complex
\[
0 \to C_1(G;\bZ) \overset{\del}{\to} C_0(G;\bZ) \to 0.
\]
The chain groups $C_0(G;\bZ)$ and $C_1(G;\bZ)$ inherit the structures of Euclidean lattices by declaring the vertex set and the oriented edge set to form orthonormal bases $V$ and $E$ for these groups, respectively.
The \emph{flow lattice} of $G$ is the sublattice
\[
\flow(G) := \ker(\del) \subset C_1(G;\bZ).
\]
Thus, $\flow(G)$ is none other than $H_1(G;\bZ)$ equipped with a positive definite inner product.
Informally, an element of $\flow(G)$ assigns a direction and a weight in $\zz_{\ge0}$ to each edge of $G$ such that at each vertex, the sum of the incoming weights equals the sum of the outgoing weights.

With respect to the given orthonormal bases, we obtain an adjoint map
\[
0 \to C_0(G;\bZ) \overset{\del^*}{\to} C_1(G;\bZ) \to 0.
\]
The {\em cut lattice} of $G$ is the sublattice
\[
\cut(G) := \im(\del^*) \subset C_1(G;\bZ).
\]
The lattices $\flow(G)$ and $\cut(G)$ are complementary, primitive sublattices of $C_1(G;\bZ)$ \cite[Proposition 3.1]{mut}.

An oriented subgraph $H  \subset G$ determines an element $[H] \in C_1(G;\bZ)$ by the rule that for each $e \in E$, the pairing $[H] \cdot e$ equals $+1$, $-1$, or $0$ depending on whether $e$ is an edge of $H$, its reversal $\overline{e}$ is an edge of $H$, or neither $e$ nor $\overline{e}$ is an edge of $H$.
We distinguish a few important cases: the subgraph $H$ is

\begin{itemize}
\item
an {\em oriented Eulerian subgraph} if $[H] \in \flow(G)$.
Equivalently, $H$ has an equal number of incoming and outgoing edges at each vertex.
\item
an {\em oriented cycle} if $[H] \in \flow(G)$, $H$ is connected, and each vertex of $H$ has a single incoming and outgoing edge.
We typically write $H = C$ in this case.
\item
a {\em oriented cut} if $[H] \in \cut(G)$.
Equivalently, $H$ is the set of all edges $E(A,B)$ directed from a vertex in a subset $A \subset V$ to its complement $B=V - A$; that is, $[H] = \del^* \sum_{v \in A} v$.
\item
a {\em oriented bond} if $[H] \in \cut(G)$ and, in the notation above, the subgraphs of $G$ induced on the vertex sets $A$ and $V-A$ are connected.
\item
an {\em orientation} if $[H] \cdot e = \pm 1$, $\forall e \in E$.
We typically write $H = O$ in this case.
\end{itemize}

The {\em support} of $x \in C_1(G;\bZ)$ is the set $\supp(x) := \{ e \in E : x \cdot e \ne 0 \}$; similarly, $\supp^+(x) = \{e \in E : x \cdot e > 0 \}$, and $\supp^-(x) = \{e \in E : x \cdot e < 0 \}$.
Let $\Lambda \subset C_1(G;\bZ)$ denote either sublattice $\flow(G)$ or $\cut(G)$.
An element $y \in \Lambda$ is {\em primitive} if its support is inclusion-minimal amongst all non-zero elements of $\Lambda$ and, in addition, $y \cdot e \in \{-1,0,+1\}$, $\forall e \in E$.
Tutte proved that $\Lambda$ is a {\em regular} lattice: this means that for every $x \in \Lambda$, $x \ne 0$, there exists a primitive element $y$ such that $\supp^+(y) \subset \supp^+(x)$ and $\supp^-(y) \subset \supp^-(x)$.
Moreover, $y \in \flow(G)$ is primitive if and only if it is the class of an oriented cycle, and $y \in \cut(G)$ is primitive if and only if it is the class of a oriented bond \cite{tutte}.

\subsection{Special elements in graph lattices.}
\label{ss:specialflow}
We now determine the indecomposable summands and simple, irreducible, and rigid elements of a graph lattice; refer to the beginnings of \Cref{ss:specialhajos} and \Cref{ss:indecomp} for the definitions.

We leave the proof of the first result as an exercise.
While it is phrased as a statement about graphs, it is, of course, just a statement about Euclidean lattices.

\begin{prop}
\label{prop:Znirred}
In the lattice $C_1(G;\bZ)$,
\begin{enumerate}[(a)]
\item
the indecomposable summands are the sublattices $(e)$, where $e \in E$;
\item
the simple elements are the classes of oriented subgraphs of $G$;
\item
the irreducible elements are the elements $\pm e$, where $e \in E$; and
\item
the rigid elements are the elements $\epsilon_1 e_1 +\epsilon_2 e_2$, where $e_1, e_2 \in E$ are distinct and $\epsilon_i\in\{\pm1\}$. 
\end{enumerate}
\end{prop}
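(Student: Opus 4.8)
The plan is to prove each of the four parts of \Cref{prop:Znirred} by direct examination of coordinates, since $C_1(G;\bZ) = \bZ^E$ is simply a Euclidean lattice with orthonormal basis $E$ and everything reduces to elementary inequalities about integers. Throughout, for $x \in C_1(G;\bZ)$ I write $x_e := x \cdot e$ for $e \in E$, so that $x \cdot y = \sum_{e \in E} x_e y_e$.

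For part (a): the sublattices $(e)$ are clearly indecomposable (rank one), they are pairwise orthogonal, and their direct sum is all of $C_1(G;\bZ)$; uniqueness of the decomposition into indecomposables (Eichler, cited in \Cref{ss:indecomp}) then identifies these as \emph{the} indecomposable summands. Alternatively one invokes the basis-of-irreducibles description from \Cref{ss:indecomp} together with part (c), since $E$ is a basis of irreducibles whose members are pairwise orthogonal. For part (c): $\pm e$ is irreducible because any decomposition $\pm e = x + y$ with $x, y \ne 0$ forces one of $x_e, y_e$ to be $0$ (their sum is $\pm 1$), say $x_e = 0$; then $x$ has some nonzero coordinate $x_f$ with $f \ne e$, and $y_f = -x_f$ since $e$-coordinate aside both vanish off $\{e\}\cup\supp(x)$ — wait, more carefully: $x + y = \pm e$ means $x_f + y_f = 0$ for all $f \ne e$, so $x \cdot y = x_e y_e + \sum_{f \ne e} x_f y_f = x_e y_e - \sum_{f \ne e} x_f^2$. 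If $x_e y_e \le 0$ this is $\le -\|x - x_e e\|^2 < 0$ unless $x = x_e e$, which is impossible since $|x_e| \le 1$ and $x \ne 0$ would give $x = \pm e = \pm e - 0$, not a nontrivial splitting. Conversely, if $z$ is irreducible and $\supp(z)$ contains two distinct edges $f, g$, write $z = (z - z_g g) + z_g g$; these are nonzero and orthogonal, so $z$ is decomposable. Hence $\supp(z)$ is a single edge $e$, and $z = z_e e$ with $|z_e| \ge 1$; if $|z_e| \ge 2$ then $z = (z_e - 1)e + e$ is a decomposition with positive inner product, so $z = \pm e$.

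For part (b): an oriented subgraph $H$ has $[H]_e \in \{-1,0,1\}$ for all $e$, and I claim such vectors are exactly the simple ones. If $z$ has all coordinates in $\{-1,0,1\}$ and $z = x + y$ with $x \cdot y > 0$, then some edge $e$ has $x_e y_e > 0$, so $x_e, y_e$ are nonzero with the same sign, forcing $|z_e| = |x_e + y_e| \ge 2$, a contradiction; so $z$ is simple. Conversely, if $z$ has some coordinate with $|z_e| \ge 2$, write $z = (z - \sgn(z_e)e) + \sgn(z_e)e$; these are nonzero and their inner product is $z_e \sgn(z_e) - 1 = |z_e| - 1 > 0$, so $z$ is not simple. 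Thus the simple elements are precisely the $\{-1,0,1\}$-vectors, which are exactly the classes $[H]$ of oriented subgraphs (the orientation of $H$ on its support is read off from the signs, edges with $[H]_e = 0$ being absent).

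For part (d): a rigid element $z$ is in particular simple, hence a $\{-1,0,1\}$-vector by part (b). If $|\supp(z)| \ge 3$, pick distinct $e_1, e_2, e_3 \in \supp(z)$; then $z = z_{e_1}e_1 + (z - z_{e_1}e_1)$ and $z = (z_{e_1}e_1 + z_{e_2}e_2) + (z - z_{e_1}e_1 - z_{e_2}e_2)$ are two genuinely distinct orthogonal splittings into nonzero pieces, so $z$ is not rigid. If $|\supp(z)| = 1$, then $z = \pm e$ has no nontrivial orthogonal decomposition at all. So $|\supp(z)| = 2$, i.e. $z = \epsilon_1 e_1 + \epsilon_2 e_2$ with $e_1 \ne e_2$ and $\epsilon_i \in \{\pm 1\}$; conversely such a $z$ is simple, and its only decomposition into nonzero orthogonal summands is $\epsilon_1 e_1 + \epsilon_2 e_2$ (any orthogonal splitting must separate the two coordinates, since a summand containing both in its support would have the other summand share — no: a nonzero summand supported on a subset of $\{e_1,e_2\}$ is either $\pm e_i$ or $z$ itself up to nothing, and orthogonality rules out both summands being multiples spanning, leaving only the split $\epsilon_1 e_1 \mid \epsilon_2 e_2$), so $z$ is rigid.

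There is no serious obstacle here — the result really is "just a statement about Euclidean lattices" as the paper notes — and the only thing requiring a little care is the bookkeeping in part (d) to confirm uniqueness of the orthogonal decomposition and to handle the edge cases $|\supp(z)| = 1$ and $|\supp(z)| \ge 3$ cleanly; I would present parts (c) and (d) first (or cite (c) inside (d)) since (a) follows formally from (c) plus Eichler's theorem and the apparatus of \Cref{ss:indecomp}.
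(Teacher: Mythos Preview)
The paper explicitly leaves \Cref{prop:Znirred} as an exercise, noting that it is ``just a statement about Euclidean lattices,'' so there is no proof in the paper to compare against; your direct coordinate-by-coordinate verification is exactly the elementary argument the authors intend, and it is correct in substance.

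Two small points to tighten. In part (c), before writing ``If $x_e y_e \le 0$'' you should record that this inequality is automatic: $x_e + y_e = \pm 1$ forces $x_e y_e \le 0$, since same-sign nonzero integers sum to something of absolute value at least $2$. In part (d), the converse (that $z = \epsilon_1 e_1 + \epsilon_2 e_2$ is rigid) is asserted but not cleanly argued: you need to show first that any orthogonal splitting $z = x + y$ has $x,y$ supported on $\{e_1,e_2\}$. This follows by the same device used elsewhere --- for $f \notin \{e_1,e_2\}$ one has $x_f y_f = -x_f^2 \le 0$, and $x_{e_i} y_{e_i} \le 0$ as above, so $x \cdot y = 0$ forces every term to vanish --- after which the conclusion is immediate. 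Finally, the self-corrections (``wait, more carefully'', ``--- no:'') should be edited out of a final version.
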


A graph is {\em 2-connected} if it is connected and does not contain a cut-vertex or a loop.
The {\em blocks} of a finite graph $G$ are its maximal induced 2-connected subgraphs.
A block is {\em trivial} if it consists of a single vertex or edge and {\em nontrivial} otherwise.

\begin{prop}
\label{prop:flowirred}
In the lattice $\flow(G)$,
\begin{enumerate}[(a)]
\item
the indecomposable summands are the sublattices $\flow(B)$, where $B$ is a nontrivial block of $G$;
\item
the simple elements are the classes of oriented Eulerian subgraphs of $G$;
\item
the irreducible elements of $\flow(G)$ are the classes of oriented cycles of $G$; and
\item
the rigid elements are the classes of oriented Eulerian subgraphs of $G$ which are the edge-disjoint union of a pair of oriented cycles that intersect in at most one vertex.
\end{enumerate}
\end{prop}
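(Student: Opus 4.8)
The plan is to treat the four parts in the order (c), (b), (d), (a), since (c) is the technical heart and the others reduce to it via the structural dictionary between elements of $\flow(G)$ and oriented Eulerian subgraphs. Throughout I would identify a nonzero $x \in \flow(G)$ with an oriented weighted subgraph, and recall Tutte's regularity (stated above): every nonzero $x$ dominates in sign and support some primitive element, and primitive elements of $\flow(G)$ are exactly classes of oriented cycles. The key elementary observation to isolate first is that if $x = y + z$ in $\flow(G)$ with $y \cdot z \ge 0$, then $\supp^+(y), \supp^+(z) \subset \supp^+(x)$ and $\supp^-(y), \supp^-(z) \subset \supp^-(x)$; equivalently there is no edge on which $y$ and $z$ have opposite signs, so that the decomposition is ``positive'' edge-by-edge. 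This is the lattice-theoretic translation of the fact that a sum of flows with no cancellation is again a flow whose support is the union.

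For (c): if $C$ is an oriented cycle and $[C] = y + z$ with $y \cdot z \ge 0$, the no-cancellation observation forces $\supp(y) \cup \supp(z) = \supp(C)$ with all weights at most the weight $1$ of $C$ on each edge, so on each edge exactly one of $y, z$ is nonzero (with weight $1$) and the other vanishes; since $y, z$ are flows, the partition of the edge set of the cycle into two flow-supports is impossible unless one part is empty. Hence $[C]$ is irreducible. Conversely, suppose $x$ is irreducible but not the class of a single oriented cycle. By Tutte's theorem pick an oriented cycle $C$ with $\supp^{\pm}(C) \subset \supp^{\pm}(x)$; then $x - [C] \in \flow(G)$ is nonzero (else $x = [C]$) and $[C] \cdot (x - [C]) = [C]\cdot x - 2 \ge |\supp(C)| - 2 \ge 0$, using that $[C] \cdot x \ge |\supp(C)|$ because $C$'s edges all have weight $\ge 1$ in $x$ with matching sign, and $|\supp(C)| \ge 3$ in a graph with no loops (loops are excluded, and a cycle of length $2$ would be a pair of parallel edges, which one must handle separately — a $2$-cycle $e_1 - e_2$ with $[C] = e_1 + e_2$ is genuinely reducible only over $C_1$, but in $\flow(G)$ it is primitive, hence irreducible). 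Writing $x = [C] + (x - [C])$ then contradicts irreducibility. This gives (c); part (b) follows by the same dichotomy with $\ge 0$ weakened to $> 0$: simple means no decomposition with strictly positive product, which by the no-cancellation observation is exactly the condition that $x$ take values in $\{-1,0,1\}$ on every edge, i.e.\ $x = [H]$ for an oriented subgraph $H$, and such $H$ lies in $\flow(G)$ iff it is Eulerian.

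For (d): a rigid element is simple, hence $[H]$ for an oriented Eulerian subgraph $H$ by (b), and it must decompose uniquely as an orthogonal sum of two nonzero flows. Orthogonality of $[H_1] + [H_2]$ with $[H_1]\cdot[H_2] = 0$ means $H_1$ and $H_2$ are edge-disjoint (as both are $\{0,\pm1\}$-valued, orthogonality with disjoint-or-opposite behaviour forces disjoint support, and opposite signs are excluded by the no-cancellation observation since the sum is simple). So $H$ is an edge-disjoint union $H_1 \sqcup H_2$ of two nonzero Eulerian subgraphs; uniqueness of this splitting is what must be characterized. If either $H_i$ is disconnected or a union of more than one cycle, or if $H_1$ and $H_2$ meet in two or more vertices, one produces a second essentially different orthogonal splitting (e.g.\ by regrouping connected Eulerian pieces, or by re-routing through a second shared vertex), contradicting rigidity; conversely, if $H$ is the edge-disjoint union of two oriented cycles meeting in at most one vertex, any orthogonal decomposition into two flows must respect the connected components of $H$ minus that vertex, and a short case check shows the only such splitting is $\{[C_1], [C_2]\}$. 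The main obstacle I anticipate is precisely this uniqueness argument in (d): carefully ruling out alternative orthogonal splittings requires organizing the cases by how the two cycles intersect (zero shared vertices versus one) and by the connectivity of each $H_i$, and showing every ``bad'' configuration yields a genuinely distinct pair.

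Finally (a) is formal given (c) and the machinery of Section~\ref{ss:indecomp}: by Tutte's theorem $\flow(G)$ has a basis of primitive elements, i.e.\ of irreducible elements (classes of oriented cycles), so the recipe of \cite[Section 3.1]{lens} applies — the indecomposable summands are generated by the $\sim$-equivalence classes of such a cycle basis, where $C \sim C'$ when connected by a chain of cycles pairwise sharing an edge. Two oriented cycles in $G$ have classes with nonzero inner product iff they share an edge, and sharing-an-edge chains of cycles connect exactly the cycles lying in a common block of $G$ (cycles in distinct blocks are edge-disjoint, and within a $2$-connected graph any two edges lie on a common cycle, giving connectivity). Hence the equivalence classes correspond to the nontrivial blocks $B$, and the corresponding summand is $\flow(B)$; this is consistent with the evident orthogonal decomposition $\flow(G) = \bigoplus_B \flow(B)$ over nontrivial blocks coming from the fact that every cycle lives in a single block.
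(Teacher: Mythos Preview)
Your overall strategy matches the paper's, and parts (a), (d), and the converse direction of (c) are essentially what the paper does (the paper proves (b) before (c), then derives (c) and (d) from the edge-by-edge analysis in (b); for (a) it cites \cite{bdlhn}, but your argument via a cycle basis and the $\sim$-relation is fine). However, there are two concrete errors you should fix.

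First, your ``key elementary observation'' is false as stated. It is not true that $x=y+z$ with $y\cdot z\ge 0$ forces $y$ and $z$ to have no opposite-sign edges. For a counterexample in a flow lattice, take $G$ to be a disjoint union of two triangles $C_1,C_2$, set $x=2[C_1]$, $y=[C_1]+[C_2]$, $z=[C_1]-[C_2]$; then $y\cdot z=3-3=0$ but $y$ and $z$ cancel on every edge of $C_2$. What \emph{is} true, and what the paper actually proves in (b), is the edge-by-edge inequality: if $|x\cdot e|\le 1$ for every edge $e$, then for any $y+z=x$ one has $(y\cdot e)(z\cdot e)\le 0$ on every edge, hence $y\cdot z\le 0$. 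This immediately gives simplicity of oriented Eulerian classes, and combined with $y\cdot z\ge 0$ it forces each product to vanish, giving $\supp(x)=\supp(y)\sqcup\supp(z)$ --- which is what you need for the forward direction of (c) and for (d). So your argument is salvageable, but you must restrict the observation to simple $x$ and justify it by the coordinate computation, not assert it in general.

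Second, the displayed arithmetic in your converse of (c) is garbled: $[C]\cdot(x-[C])=[C]\cdot x-[C]\cdot[C]=[C]\cdot x-|\supp(C)|$, not $[C]\cdot x-2$. Since $\supp^\pm(C)\subset\supp^\pm(x)$ gives $[C]\cdot x\ge|\supp(C)|$, you get $[C]\cdot(x-[C])\ge 0$ directly, with no need for any lower bound on cycle length; the parenthetical about $2$-cycles and loops is unnecessary and confused. (The paper, incidentally, handles this direction slightly differently: it first shows in (b) that a non-simple element has some $|x\cdot e_0|>1$ and peels off a dominated cycle through $e_0$ to get a decomposition with \emph{strictly} positive product.)
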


\noindent
Statements (a), (b), and (c) should be considered well-known and can be deduced from the references \cite{bdlhn,GR,mut}, but we elaborate on them here for completeness.

\begin{proof}
(a) Since each cycle in $G$ is contained in a unique non-trivial block $B$ of $G$, it is clear that $\flow(G)$ decomposes as a sum of the sublattices $\flow(B)$.
The indecomposability of each summand $\flow(B)$ then follows from \cite[Proposition 4]{bdlhn}.

(b) 
Suppose that $x \in \flow(G)$.
Since $\flow(G)$ is regular, we can write $x = y_1 + \cdots + y_k$ as a sum of primitive elements, where $\supp^+(y_i) \subset \supp^+(x)$ and $\supp^-(y_i) \subset \supp^-(x)$ for all $i=1,\dots,k$.
Suppose that $|x \cdot e_0| > 1$ for some $e_0 \in E$.
Then in the preceding expression, we can locate some $y_i$ for which $y_i \cdot e_0 \ne 0$.
Then $x = y_i + (x-y_i)$, and the condition on supports shows that $(y_i \cdot e)((x-y_i) \cdot e) \ge 0$ for all $e \in E$, with strict inequality if $e = e_0$.
Summing over all $e \in E$ gives $y_i \cdot (x-y_i) > 0$, so $x$ is not simple.
It follows that if $x$ is simple, then $|x \cdot e| \le 1$ for all $e \in E$.

Conversely, suppose that $|x \cdot e| \le 1$ for all $e \in E$.
Write $x = y + z$ with $y,z \in \flow(G)$.
Then $(y \cdot e) + (z \cdot e) = x \cdot e \in \{-1,0,+1\}$, $\forall e \in E$.
This implies that $(y \cdot e) (z \cdot e) \le 0$, $\forall e \in E$, whence $y \cdot z \le 0$, so $x$ is simple.

(c) From the preceding argument, we see that if $x$ is simple, $x = y + z$, and $y \cdot z = 0$, then $\supp(x) = \supp(y) \sqcup \supp(z)$.
This is possible if and only if $y=0$, $z=0$, or $x$ is not primitive.
Hence $x$ is irreducible if and only if $x$ is the class of an oriented cycle.

(d) Suppose now that $x$ is simple, $x = y + z$, $y \cdot z = 0$, and $y,z \ne 0$.
Then $x$, $y$, and $z$, are simple, so $x = [H]$, $y = [H_1]$, and $z = [H_2]$ for some non-empty Eulerian subgraphs $H$, $H_1$, and $H_2$ with $E(H) = E(H_1) \sqcup E(H_2)$.

If $H=C_1\cup C_2$ is the edge-disjoint union of a pair of oriented cycles that intersect in at most one vertex, then it follows that $C_1 = H_1$ and $C_2 = H_2$, up to reordering the factors.
Hence, in this case, $[H]$ is rigid.

Conversely, if $[H]$ is rigid, then each of $H_1$ and $H_2$ is a cycle (possibly plus some isolated vertices).
Suppose by way of contradiction that the cycles intersect in more than one vertex.
Then we can locate such a pair $v$ and $w$ so that the oriented path from $v$ to $w$ along $H_1$ does not contain any vertices of the cycle in $H_2$ in its interior.
Take the union of this path with the oriented path from $w$ to $v$ in $H_2$ to create an oriented cycle $C$ in $H$ distinct from $H_1$ and $H_2$.
Then $[H] = [C] + [H - C]$ is a decomposition into orthogonal nonzero elements distinct from $[H] = [C_1] + [C_2]$, contradicting the rigidity of $[H]$.
Therefore, the cycles intersect in at most one vertex.
\end{proof}

We leave the proof of the following result as an exercise, as well, since it is formally identical to the proof of \Cref{prop:flowirred}(a)-(c).
Note that we do not state a description of the rigid elements of $\cut(G)$, as we do not need it here.

\begin{prop}
\label{prop:cutirred}
In the lattice $\cut(G)$,
\begin{enumerate}[(a)]
\item
the indecomposable summands are the sublattices $\cut(B)$, where $B$ is a nontrivial block of $G$;
\item
the simple elements of $\cut(G)$ are the classes of oriented cuts in $G$; and
\item
the irreducible elements of $\cut(G)$ are the classes of oriented bonds in $G$. \qed
\end{enumerate}
\end{prop}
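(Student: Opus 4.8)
The plan is to transcribe the proof of \Cref{prop:flowirred}(a)--(c), exchanging the flow lattice for the cut lattice throughout and carrying over the usual dictionary: oriented cycles become oriented bonds, oriented Eulerian subgraphs become oriented cuts, and the nontrivial blocks of $G$ continue to cut out the indecomposable summands. The only ingredients of that proof needing replacement are the two facts about $\flow(G)$ attributed to Tutte, and both have the counterparts we need (indeed both are already recorded in the excerpt): $\cut(G)$ is likewise a regular lattice, and its primitive elements are exactly the classes of oriented bonds. It will also be convenient to record the tautology that we will use in place of the Eulerian-subgraph description: since an oriented cut is by definition an oriented subgraph $H$ with $[H]\in\cut(G)$, the $\{0,\pm1\}$-valued elements of $\cut(G)$ are precisely the classes of oriented cuts.

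For (b) I would first show that $x\in\cut(G)$ is simple if and only if $|x\cdot e|\le 1$ for every $e\in E$. If $|x\cdot e_0|\ge 2$ for some $e_0$, use regularity to write $x=y_1+\dots+y_k$ with each $y_i$ primitive and $\supp^{\pm}(y_i)\subseteq\supp^{\pm}(x)$, choose $y_i$ with $y_i\cdot e_0\ne 0$, and note that the support containment forces $(y_i\cdot e)\bigl((x-y_i)\cdot e\bigr)\ge 0$ for all $e$, strictly at $e_0$; summing over $e$ gives $y_i\cdot(x-y_i)>0$ while $x-y_i\ne 0$, so $x$ is not simple. Conversely, if $|x\cdot e|\le 1$ everywhere and $x=y+z$ in $\cut(G)$, then $(y\cdot e)+(z\cdot e)\in\{-1,0,1\}$ forces $(y\cdot e)(z\cdot e)\le 0$ for each $e$, whence $y\cdot z\le 0$ and $x$ is simple. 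Combining this equivalence with the tautology above gives (b).

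For (c) I would refine the same computation: if $x$ is simple, $x=y+z$, and $y\cdot z=0$, then the coordinatewise inequality $(y\cdot e)(z\cdot e)\le 0$ must be an equality for each $e$, so $\supp(x)=\supp(y)\sqcup\supp(z)$. When $x$ is primitive this cannot happen with $y,z$ both nonzero, since it would exhibit a nonzero element of $\cut(G)$ (namely $y$) with support strictly smaller than $\supp(x)$; hence primitive elements are irreducible. Conversely, if $x$ is simple but not primitive, regularity yields a primitive $y$ with $\supp^{\pm}(y)\subseteq\supp^{\pm}(x)$ and $\supp(y)\subsetneq\supp(x)$, and then $z=x-y$ is nonzero with $\supp(z)=\supp(x)\setminus\supp(y)$, so $y\cdot z=0$ and $x$ is reducible. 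Hence the irreducible elements of $\cut(G)$ are precisely its primitive elements, i.e.\ the classes of oriented bonds.

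For (a) I would observe that the blocks of $G$ partition its edge set, that $\cut(B)$ sits inside $\cut(G)$ for each block $B$ (extend a cut of $B$ across the block--cut tree), and that restricting $\del^* f$ to each block's edges writes an arbitrary element of $\cut(G)$ as a sum of such pieces; hence $\cut(G)=\bigoplus_B\cut(B)$ — equivalently, every oriented bond, being primitive, is supported in a single block. Indecomposability of $\cut(B)$ for a nontrivial block $B$ then follows exactly as in \Cref{prop:flowirred}(a) from \cite[Proposition 4]{bdlhn} (for planar $G$ one may instead dualize \Cref{prop:flowirred}(a)); the statement as phrased suppresses the rank-one summands $\cut(\{e\})\cong\bZ$ coming from any bridges $e$ of $G$, harmless in the applications. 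I do not anticipate a genuine obstacle: the proposition is a routine dualization of \Cref{prop:flowirred}, and the only point requiring attention is to cite Tutte's description of the primitive elements of $\cut(G)$ as oriented bonds wherever the proof of \Cref{prop:flowirred} used his description of the primitive elements of $\flow(G)$ as oriented cycles.
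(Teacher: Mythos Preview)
Your proposal is correct and matches the paper's own treatment exactly: the paper leaves \Cref{prop:cutirred} as an exercise, stating that the proof is ``formally identical to the proof of \Cref{prop:flowirred}(a)--(c),'' and you have faithfully carried out that transcription using Tutte's description of the primitive elements of $\cut(G)$ as oriented bonds. Your observation that the statement of part (a) tacitly omits the rank-one summands coming from bridges is a fair catch, though as you note it does not affect the applications.
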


The next result combines the preceding descriptions of simple elements.
It is required to prove \Cref{prop:uniqueshort}.

\begin{lem}
\label{lem:subgraph}
Suppose that $H$ is an oriented subgraph of $G$.
Then $[H] \in \Flow(G) \oplus \Cut(G) \subset C_1(G;\bZ)$ if and only if $[H] = [H_1] + [H_2]$, where $H_1$ is an Eulerian subgraph, $H_2$ is a oriented cut, and $H_1$ and $H_2$ have disjoint edge sets.
\end{lem}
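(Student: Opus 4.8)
The plan is to prove the two implications of the ``if and only if'' separately. The reverse implication is immediate: if $[H]=[H_1]+[H_2]$ with $H_1$ an oriented Eulerian subgraph and $H_2$ an oriented cut, then $[H_1]\in\Flow(G)$ and $[H_2]\in\Cut(G)$ by the definitions recalled above, so $[H]\in\Flow(G)\oplus\Cut(G)$, with nothing further to check.

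For the forward implication I would use that $\Flow(G)$ and $\Cut(G)$ are complementary primitive sublattices of $C_1(G;\bZ)$, and in particular mutually orthogonal (from $\langle\del^* v,w\rangle=\langle v,\del w\rangle$). Hence if $[H]\in\Flow(G)\oplus\Cut(G)$ we may write $[H]=f+c$ with $f\in\Flow(G)$, $c\in\Cut(G)$, and necessarily $f\cdot c=0$. The one genuine idea is to exploit that $[H]$ is a \emph{simple} element of $C_1(G;\bZ)$, i.e.\ every entry $[H]\cdot e$ lies in $\{-1,0,1\}$ (\Cref{prop:Znirred}(b)). For each edge $e$, the relation $(f\cdot e)+(c\cdot e)=[H]\cdot e\in\{-1,0,1\}$ forbids $f\cdot e$ and $c\cdot e$ from being nonzero with the same sign, so $(f\cdot e)(c\cdot e)\le 0$ for every $e$; summing over $e$ and using $f\cdot c=0$ forces each term to vanish, so $\supp(f)$ and $\supp(c)$ are disjoint. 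Then for every edge $e$ at least one of $f\cdot e$, $c\cdot e$ is zero and the other equals $[H]\cdot e\in\{-1,0,1\}$, so $f$ and $c$ are themselves simple.

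To conclude, I would apply \Cref{prop:flowirred}(b) to the simple element $f\in\Flow(G)$ to obtain an oriented Eulerian subgraph $H_1$ with $f=[H_1]$, and \Cref{prop:cutirred}(b) to the simple element $c\in\Cut(G)$ to obtain an oriented cut $H_2$ with $c=[H_2]$. The disjointness of $\supp(f)$ and $\supp(c)$ is exactly the statement that $H_1$ and $H_2$ have disjoint edge sets, which completes the proof.

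I do not expect a serious obstacle: the only step that is not bookkeeping is the observation that orthogonality of the components together with simplicity of $[H]$ forces the $\Flow$- and $\Cut$-parts to be simple and supported on disjoint edge sets, and this is the same sign-counting argument already used in the proof of \Cref{prop:flowirred}(b)--(c).
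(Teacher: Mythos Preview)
Your argument is correct, and it is genuinely different from the paper's proof. The paper proceeds by induction on the number of edges of $H$: writing $y=\del^*(z)$ for the cut component, it peels off the bond $E(A,B)$ determined by the set $A$ of vertices where $z$ attains its maximum, verifies via an inner-product inequality that $E(A,B)$ is an oriented subgraph of $H$, and then inducts on the smaller subgraph $H\setminus E(A,B)$. Your approach bypasses the induction entirely by exploiting the orthogonality $f\cdot c=0$ together with the termwise inequality $(f\cdot e)(c\cdot e)\le 0$ forced by $[H]\cdot e\in\{-1,0,1\}$; equality in the sum then forces each term to vanish, giving disjoint supports and simplicity of $f$ and $c$ in one stroke. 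Your route is shorter and more conceptual---it is essentially the same mechanism used in the proof of \Cref{prop:flowirred}(b)--(c), as you observe---while the paper's inductive argument is more explicit about how the cut part is built up from bonds. Both land on the characterizations in \Cref{prop:flowirred}(b) and \Cref{prop:cutirred}(b) to finish.
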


\noindent Here is another way to phrase \Cref{lem:subgraph}, in light of the preceding results.
Suppose that $x_1 \in \flow(G)$ and $x_2 \in \cut(G)$.
Then $x_1 + x_2 \in C_1(G;\bZ)$ is simple if and only if $x_1$ is simple in $\flow(G)$, $x_2$ is simple in $\cut(G)$, and $\supp(x_1) \cap \supp(x_2) = \emptyset$.

\begin{proof}
The reverse direction is immediate from the definitions, so we prove the forward direction.  We may assume that $G$ is connected, since otherwise one may simply work with one connected component at a time.
We proceed by induction on the number of edges in $H$.
Write $[H] = x + y$, where $x \in \Flow(G)$ and $y \in \Cut(G)$.
Then we may write $y = \del^*(z)$ for some $z = \sum_{v \in V} z_v \cdot v \in C_0(G;\bZ)$.
Let $m = \max \{z_v : v \in V \}$, let $A = \{ v \in V : z_v = m \}$, and let $B = V(G) - A$.
If $B = \emptyset$, then $y = 0$, and $H$ is an oriented Eulerian subgraph, as desired.
Hence we may assume that $B \ne \emptyset$, and so the oriented cut $E(A,B)$ is non-empty, as well.
For every oriented edge $e = (v,w) \in E(A,B)$, we have $[H] \cdot e \le 1$, with equality if and only if $e$ is an oriented edge in $H$.
On the other hand, $y \cdot e = z_v - z_w \ge m - (m-1) = 1$ for each such edge.
Therefore, if $[E(A,B)] \in \Cut(G)$ denotes the class of the cut, then it follows that
\[
[E(A,B)] \cdot [E(A,B)] \ge [H] \cdot [E(A,B)] = y \cdot [E(A,B)] \ge [E(A,B)] \cdot [E(A,B)],
\]
using the fact that $x \in \Flow(G) = \Cut(G)^\perp$ in the middle equality.
Since all inequalities are equalities, it follows that $E(A,B)$ is an oriented subgraph of $H$.
Let $H'$ denote the subgraph of $H$ obtained by discarding the edges of $E(A,B)$.
Since $E(A,B) \ne \emptyset$, $H'$ has fewer edges than $H$.
Since $[H'] = [H] - [E(A,B)] \in \Flow(G) \oplus \Cut(G)$, it follows by induction that $H'$ decomposes as a disjoint union of an Eulerian subgraph $H_1'$ and an oriented cut $H_2'$ in $G$.
The union $H_2$ of $H_2'$ and $E(A,B)$ is an oriented subgraph with $[H_2] = [H_2'] + [E(A,B)] \in \Cut(G)$.  Simplicity of $[H_2]$ follows from that of $[H]$, so $H_2$ is an oriented cut, by \Cref{prop:cutirred}(b).
Therefore, $H$ admits the required decomposition with $H_1 = H_1'$ and $H_2$, and the proof is complete.
\end{proof}

\subsection{Flows on planar graphs.}

A planar graph is a graph $G$ with a fixed drawing on $S^2$ without edge crossings.
A set of cycles $C_1,\dots,C_n$ in $G$ is {\em nested} if each $C_i$ is the boundary of a disk $D_i \subset S^2$ and, up to reordering, $D_1 \subset \cdots \subset D_n$.
Note that the definition does not take orientations of cycles, boundaries, or $S^2$ into consideration.

\begin{lem}
\label{lem:pwsnest}
Let $G$ be a planar graph and let $C_1$ and $C_2$ be oriented cycles in $G$.
Suppose that
\begin{enumerate}
\item
$[C_1] \cdot [C_2] < 0$ and $[C_1] + [C_2]$ is irreducible, or
\item
$[C_1] \cdot [C_2] = 0$ and $[C_1] + [C_2]$ is rigid.
\end{enumerate}
Then $C_1$ and $C_2$ are nested, $[C_1] \cdot [C_2] = -|E(C_1 \cap C_2)|$, and $C_1 \cap C_2$ is either connected or empty.
\end{lem}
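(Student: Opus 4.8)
The plan is to analyze the overlap $C_1 \cap C_2$ combinatorially, using planarity to control how the two cycles can intersect, and the algebraic hypotheses (irreducibility or rigidity of $[C_1]+[C_2]$) to rule out all configurations except the nested one. First I would set $x = [C_1] + [C_2]$ and record the basic numerics: since $C_1$ and $C_2$ are oriented cycles, $[C_1]\cdot e, [C_2]\cdot e \in \{-1,0,1\}$ for every edge $e$, so $[C_1]\cdot[C_2] = \sum_{e} ([C_1]\cdot e)([C_2]\cdot e)$ is a sum over the shared edges $E(C_1\cap C_2)$ of terms $\pm 1$. In case (1), $[C_1]\cdot[C_2]<0$ forces at least one edge to be traversed in opposite directions by the two cycles; in case (2), the contributions cancel. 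The first substantive claim to establish is that on every shared edge the two cycles agree in direction in a uniform way — more precisely, that $[C_1]\cdot[C_2] = -|E(C_1\cap C_2)|$, i.e. every shared edge contributes $-1$.

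To get this, I would argue that if some shared edge contributed $+1$, then $x$ would be simple but decomposable in a way that contradicts the hypothesis: by \Cref{prop:flowirred}(b), $x$ is simple iff $|x\cdot e|\le 1$ for all $e$, and the only way an edge shared with opposite contribution coexists with a reinforcing edge is to force $|x\cdot e|=2$ somewhere or to allow a further orthogonal splitting. The cleaner route is probably to pass directly to the planar picture: draw $C_1$ and $C_2$ on $S^2$, let $C_1 \cap C_2$ have components (maximal shared arcs or shared points) $\gamma_1,\dots,\gamma_k$. If $k\ge 2$, or if $k=1$ but $C_1$ and $C_2$ cross transversally at that component, one can perform a smoothing/surgery at one of the shared pieces — splitting the pair of cycles into a different pair of cycles $C_1', C_2'$ with the same total class $x = [C_1']+[C_2']$ (when the orientations match up appropriately) or producing a decomposition $x = [C]+[x-C]$ into orthogonal nonzero flows distinct from $[C_1]+[C_2]$. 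This is exactly the kind of "reroute at an intersection point" argument used in the proof of \Cref{prop:flowirred}(d). In case (1), such a resplitting contradicts irreducibility of $x$ (it yields a decomposition with inner product $\ge 0$); in case (2), it contradicts rigidity, since rigidity allows only the one orthogonal decomposition $[C_1]+[C_2]$.

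The key topological input is planarity: two cycles on $S^2$ that are not nested and meet in a connected set must either cross (giving a transverse intersection where smoothing changes the homology class appropriately) or meet along a shared arc in a way that again admits a rerouting. I would organize the case analysis on $C_1 \cap C_2$ as: (i) empty — then the cycles bound disjoint disks, hence are trivially nested, and $[C_1]\cdot[C_2]=0$ (consistent only with case (2), where rigidity is automatic); (ii) a single point — handle as a degenerate crossing; (iii) a single arc — show the cycles must "continue on the same side", which is precisely nestedness, and count the contribution as $-|E(C_1\cap C_2)|$; (iv) two or more components — derive a contradiction by rerouting. Throughout, the orientation bookkeeping (which smoothing respects the orientations so that $[C_1']+[C_2'] = [C_1]+[C_2]$ rather than $[C_1]-[C_2]$) needs care, and the sign of $[C_1]\cdot[C_2]$ is what tells us which smoothing is the orientation-compatible one.

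The main obstacle I expect is case (iv) and, within it, making the rerouting argument fully rigorous in the presence of shared arcs rather than isolated transverse crossings: one must choose the right shared component at which to surger, verify that the resulting curves are genuine (embedded) cycles and not merely Eulerian subgraphs, and confirm that the new decomposition is genuinely distinct from $[C_1]+[C_2]$ and has nonnegative inner product between its two pieces. The planar Jordan-curve structure does most of the work — a shared arc together with the two "outgoing" strands of $C_1$ and of $C_2$ at each endpoint determines, via the cyclic order around the endpoint, whether the cycles cross or touch — but translating "the cyclic order around this vertex looks like $C_1, C_2, C_1, C_2$" into "we can reroute to contradict (ir)reducibility" is the crux. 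Once the count $[C_1]\cdot[C_2] = -|E(C_1\cap C_2)|$ and the connectivity of $C_1 \cap C_2$ are in hand, nestedness follows: a connected shared piece $\gamma$ separates each $C_i$ into $\gamma$ plus a complementary arc $\alpha_i$, and the two arcs $\alpha_1, \alpha_2$ lie in closed complementary disks of $\gamma$ in $S^2$ whose union is all of $S^2$; this exhibits $C_1$ and $C_2$ as boundaries of nested disks.
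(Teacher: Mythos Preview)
Your approach is genuinely different from the paper's, and the core step you flag as ``the crux'' --- the rerouting argument in case (iv) --- is not actually carried out, so the proposal has a real gap there.  The paper sidesteps this difficulty entirely with a short homological (2-chain) argument, which I'll describe so you can see what you are missing.

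For case (2) the paper's proof is a one-liner: \Cref{prop:flowirred}(d) already tells you that a rigid flow is the class of an edge-disjoint union of two oriented cycles meeting in at most one vertex.  So $C_1$ and $C_2$ are edge-disjoint (giving $[C_1]\cdot[C_2]=0=-|E(C_1\cap C_2)|$ trivially) and share at most one vertex; planarity then makes them nested.  Your plan to reprove this via rerouting is unnecessary --- you should just cite the proposition.

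For case (1) the paper fixes oriented disks $D_1, D_2$ on $S^2$ with $\partial D_i=[C_i]$ and considers the 2-chain $D_1+D_2$, which has coefficients in $\{0,1,2\}$ and boundary $[C_1]+[C_2]=[C]$.  Since $C$ is a cycle it bounds a disk $D$, and every 2-chain with boundary $[C]$ has the form $D+n[S^2]$; only $D$ and $D+[S^2]$ have coefficients in $\{0,1,2\}$.  In the first case $D_1, D_2$ have disjoint interiors with union $D$, forcing $D_1\cap D_2=C_1\cap C_2$ to be a single arc (removing $k$ disjoint arcs from a disk yields $k+1$ regions, but here there are exactly two).  The second case is symmetric with exteriors in place of interiors.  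Either way $C_1\cup C_2$ is a theta-graph, which gives all three conclusions at once.

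By contrast, your rerouting strategy runs into trouble: irreducibility alone does not rule out $k\ge2$ shared arcs (for $k=3$ there are cyclic orderings for which $E(C_1)\triangle E(C_2)$ is a single cycle), so you must invoke planarity, and your proposal never says how.  The ``surger at a shared component to produce a decomposition with nonnegative inner product'' idea is not made precise, and the obvious splittings along a shared arc $\gamma_j$ give pairs with \emph{negative} inner product $-|E(\gamma_j)|$, which does not contradict irreducibility.  The 2-chain argument is both shorter and avoids this case analysis altogether.
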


\begin{proof}
Fix a planar drawing of $G$ on the sphere $S^2$.
The oriented cycles $C_1$ and $C_2$ bound oriented disks $D_1$ and $D_2$ in $S^2$, respectively, which we regard as 2-chains with coefficients 0 and 1 (say, with respect to a cellular decomposition of $S^2$ whose 1-skeleton includes $G$).
The sum $D_1+D_2$ is a 2-chain with coefficients in $\{0, 1, 2\}$ and with boundary $[C_1]+[C_2]$.

First suppose that case (1) holds.
By Proposition \ref{prop:flowirred}(b), $[C_1]+[C_2]$ is the class of an oriented cycle $C$, so it bounds an oriented disk $D$.
Moreover, the 2-chains that $C$ bounds take the form $D + n \cdot [S^2]$, $n \in \bZ$.
The only such 2-chains with coefficients in $\{0, 1, 2\}$ are $D$ and $D + [S^2]$.
The only decompositions of $D$ as a sum of disks $D_1+D_2$ occur when $D_1$ and $D_2$ have disjoint interiors and $C_1 \cap C_2$ is a path containing at least one edge.
Put another way, $C_1 \cup C_2$ is homeomorphic to a theta-graph, and $D_1$ and $D_2$ are two of its lobes.
Similarly, the only decompositions of $D + [S^2]$ as a sum of disks $D_1 + D_2$ occurs when $D_1$ and $D_2$ have disjoint {\em exteriors} and $C_1 \cap C_2$ is a path containing at least one edge.
Once more, $C_1 \cup C_2$ is homeomorphic to a theta-graph, but now $D_1$ and $D_2$ are complementary to two of its lobes.
In either case, we obtain the conclusion of the lemma.

Next suppose that case (2) holds.
By Proposition \ref{prop:flowirred}(d), $C_1$ and $C_2$ intersect in at most one vertex.
Since $G$ is planar, the conclusion of the lemma follows once more.
\end{proof}

The next result asserts a Helly property for cycles in a planar graph.

\begin{lem}
\label{lem:nested}
Suppose that $G$ is a planar graph and that $C_1,\dots,C_n$ are pairwise nested oriented cycles in $G$ such that $[C_i] \cdot [C_j] > 0$ for all $1 \le i,j \le n$.
Then $C_1,\dots,C_n$ are nested and share at least one edge in common.
\end{lem}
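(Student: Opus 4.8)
The plan is to attach to each $C_i$ a canonical closed disk $D_i\subset S^2$ and to show that the hypothesis $[C_i]\cdot[C_j]>0$ forces the $D_i$ to be pairwise comparable under inclusion; a family of sets that is pairwise comparable is a chain, which is precisely the assertion that $C_1,\dots,C_n$ are nested, and a common edge then drops out. Concretely, I would fix the planar drawing of $G$ on $S^2$ and an orientation of $S^2$ as in the proof of \Cref{lem:pwsnest}. Each cycle $C_i$ is an embedded circle and so bounds two closed disks; these induce opposite boundary orientations on $C_i$ (their sum is the fundamental $2$-cycle of $S^2$, whose boundary vanishes), so exactly one of them, call it $D_i$, satisfies $[\partial D_i]=[C_i]$ in $C_1(G;\bZ)$. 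I may assume the $C_i$ are pairwise distinct: a repeated cycle is discarded, and no underlying cycle can occur with both orientations, since that would give $[C_i]\cdot[C_j]=-|E(C_i)|<0$.

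The key step -- and the one I expect to demand the most care -- is: for $i\ne j$, either $D_i\subsetneq D_j$ or $D_j\subsetneq D_i$. Since $C_i$ and $C_j$ are nested, there are closed disks $E\in\{D_i,D_i'\}$ and $F\in\{D_j,D_j'\}$ (writing $D_\bullet'$ for the complementary disk) with $E\subseteq F$. For an edge $e\in E(C_i)\cap E(C_j)$, the interiors of $E$ and $F$ lie locally on the same side of $e$ (as $\mathrm{int}\,E\subseteq\mathrm{int}\,F$ and $e\subseteq\partial E\cap\partial F$), so $\partial E$ and $\partial F$ induce the same orientation on $e$; summing over such edges, $[\partial E]\cdot[\partial F]=|E(C_i)\cap E(C_j)|$, which is strictly positive because $[C_i]\cdot[C_j]>0$ forces $C_i$ and $C_j$ to share an edge. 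Now $[\partial E]=\pm[C_i]$ and $[\partial F]=\pm[C_j]$, and the two signs must coincide, lest $[\partial E]\cdot[\partial F]$ and $[C_i]\cdot[C_j]$ have opposite signs; hence $(E,F)=(D_i,D_j)$, giving $D_i\subseteq D_j$, or $(E,F)=(D_i',D_j')$, giving $\mathrm{int}\,D_j\subseteq\mathrm{int}\,D_i$ and hence $D_j\subseteq D_i$. These inclusions are strict since the $C_i$ are distinct. The delicate points are the orientation bookkeeping behind $[\partial E]\cdot[\partial F]=|E(C_i)\cap E(C_j)|$, and checking that this computation remains valid when $C_i\cap C_j$ is a disjoint union of several paths rather than a single arc.

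Granting the key step, $\{D_1,\dots,D_n\}$ is totally ordered by inclusion; relabeling so that $D_1\subsetneq\cdots\subsetneq D_n$ exhibits $C_1,\dots,C_n$ as nested in the sense of the definition. For the common edge, note that $[C_1]\cdot[C_n]>0$ yields an edge $e$ lying in both $C_1$ and $C_n$. For any $k$, on one hand $e\subseteq C_1\subseteq D_1\subseteq D_k$, and on the other $e\subseteq\partial D_n$ is disjoint from $\mathrm{int}\,D_n\supseteq\mathrm{int}\,D_k$; hence $e\subseteq D_k\setminus\mathrm{int}\,D_k=\partial D_k=C_k$, so $e$ is an edge of $C_k$. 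Thus $e\in\bigcap_{k=1}^n E(C_k)$, which completes the argument.
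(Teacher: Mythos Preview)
Your argument is correct and takes a genuinely different route from the paper's.  The paper proceeds by induction on $n$: assuming $C_1,\dots,C_{n-1}$ are already nested with disks $D_1\subset\cdots\subset D_{n-1}$, it locates $C_n$ relative to this chain via a case analysis, and rules out the one bad case (where $C_n$ sits in an ``annulus'' $D_k\setminus D_{k-1}$ without separating $C_{k-1}$ from $C_k$) by a sign computation on the triple product $([C_{k-1}]\cdot[C_k])([C_k]\cdot[C_n])([C_n]\cdot[C_{k-1}])$.  The common-edge assertion is then checked separately in each case.

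Your approach bypasses the induction entirely.  By passing to the \emph{oriented} disks $D_i$ determined by $[\partial D_i]=[C_i]$, you turn the sign hypothesis $[C_i]\cdot[C_j]>0$ into the statement that the $D_i$ are pairwise comparable under inclusion, and a pairwise comparable family is automatically a chain.  Your extraction of the common edge from $e\in C_1\cap C_n$ via $e\subseteq D_k\setminus\mathrm{int}\,D_k$ is also cleaner than the paper's case-by-case verification.  The orientation bookkeeping you flag as delicate (that $[\partial E]\cdot[\partial F]=|E(C_i)\cap E(C_j)|$ when $E\subseteq F$) is sound and works edge-by-edge, so there is no issue when $C_i\cap C_j$ has several components.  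One small omission: when you write ``there are closed disks $E,F$ with $E\subseteq F$'' you are tacitly allowing yourself to swap $i$ and $j$; this is harmless since your conclusion is symmetric, but worth stating.  Overall your argument is shorter and more conceptual; the paper's has the minor advantage of being entirely combinatorial, avoiding any appeal to orientations of $S^2$ and boundary maps on $2$-chains.
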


\begin{proof}
The proof is by induction on $n$.
The statement is immediate for $n = 2$, so suppose that $n > 2$ and the statement holds for fewer than $n$ such cycles.
By the induction hypothesis, the set of edges $E$ in common to $C_1,\dots,C_{n-1}$ is non-empty, and we may relabel the cycles so that $C_i = \del D_i$ for disks $D_i \subset S^2$ satisfying $D_1 \subset \cdots \subset D_{n-1}$.
Furthermore, we let $D_i' \subset S^2$ denote the complementary disk with $D_i \cap D_i' = C_i$; thus, $D_{n-1}' \subset \cdots \subset D_1'$.
Because $C_n$ is nested with each of $C_1,\dots,C_{n-1}$, it follows that $C_n \subset D_i$ or $C_n \subset D_i'$ for each $i$.

Suppose that $C_n$ bounds a disk $D_n$ that contains $D_{n-1}$.
Then $C_1,\dots,C_n$ are nested.
Since $[C_1] \cdot [C_n] > 0$, the set of edges in $C_1 \cap C_n$ is non-empty, and since $D_1 \subset \cdots \subset D_n$, it follows that $C_1 \cap C_n \subset C_1 \cap \cdots \cap C_n$, so the set of edges in common to these cycles is non-empty.
If instead $C_n$ bounds a disk $D_n$ contained in $D_1$, then a similar argument guarantees the desired conclusion.

We are left to consider the possibility that $C_n$ is contained in $D_k$ but not $D_{k-1}$ for some value $1 < k < n-1$.
Let $D_n$ denote the disk that $C_n$ bounds that is contained in $D_k$.
If $D_n$ does not contain $D_{k-1}$, then $D_{k-1}'$ contains $D_n$, and the interiors of $D_{k-1}$, $D_k'$, and $D_n$ are pairwise disjoint.
But then the triple product $([C_{k-1}] \cdot [C_k]) ([C_k] \cdot [C_n]) ([C_n] \cdot [C_{k-1}])$ is not positive, in contradiction to the assumption that each term is positive.
Consequently, $D_n$ contains $D_{k-1}$, and we infer that $C_1,\dots,C_n$ are nested.
As $D_{k-1} \subset D_n \subset D_k$ and $E \subset C_{k-1} \cap C_k$, it follows that $E \subset C_n$ as well, so $\emptyset \ne E \subset C_1 \cap \cdots \cap C_n$ certifies that the cycles contain a common edge.
\end{proof}

\subsection{Flows and the characteristic coset.}
\label{ss:char}

The material of this subsection and the next is not needed until \Cref{sec:cubiquity}.
We begin by recalling some notions from \cite[Section 2.2]{mut}.

Let $\Lambda$ denote a positive definite, integral lattice.
Denote by
\[
\Char(\Lambda) = \{ \chi \in \Lambda^* : \chi \cdot \lambda \equiv \lambda \cdot \lambda \, (\textup{mod } 2), \, \forall \lambda \in \Lambda \}
\]
the set of {\em characteristic elements} of $\Lambda$.
It is a coset of $2 \Lambda^*$ in $\Lambda^*$.
The set
\[
\cX(\Lambda) := \Char(\Lambda) / 2 \Lambda
\]
is a torsor of the determinant group $\Lambda^* / \Lambda$.
Denote by
\[
\Short(\Lambda) = \{ \chi \in \Char(\Lambda) : \chi \cdot \chi \le (\chi - 2 \lambda) \cdot (\chi - 2\lambda), \forall \lambda \in \Lambda \}
\]
the set of {\em short} characteristic elements of $\Lambda$.
Thus, $\chi \in \Short(\Lambda)$ if and only if $\chi$ has the minimum self-pairing of any representative of its coset in $\cX(\Lambda)$.

As an example, in terms of any orthonormal basis for the Euclidean lattice $\bZ^n$, $\Char(\bZ^n)$ consists of all elements with odd coefficients, and $\Short(\bZ^n) = \{\pm 1 \}^n$.
Thus, for any graph $G$, the elements of $\Short(C_1(G;\bZ))$ are the classes of the $2^n$ orientations on $G$.

The embedding $\flow(G) \into C_1(G;\bZ)$ induces a restriction map of dual lattices $C_1(G;\bZ)^* \to \flow(G)^*$.
As shown in \cite[Corollary 3.4]{mut}, the restriction map induces a {\em surjective} map $\Short(C_1(G;\bZ)) \to \Short(\flow(G))$.
In concrete terms, if $\chi \in \Short(\flow(G))$ is the restriction of the class of an orientation $O$ on $G$, and if $C$ is an oriented cycle in $G$, then $\chi \cdot [C]$ is equal to the signed sum of edges in $C$, where the sign of an edge is $+1$ or $-1$ according to whether or not its orientation in $C$ matches its orientation in $O$.

An orientation on $G$ is {\em acyclic} if it does not contain the edge set of any oriented cycle in $G$.
Equivalently, an orientation on $G$ is acyclic if and only if it is possible to order the vertices of $G$ in such a way that all edges direct from a lower vertex to a higher vertex.

\begin{prop}
\label{prop:uniqueshort}
An element $\chi \in \Short(\flow(G))$ is the unique short element in its coset in $\cX(\flow(G))$ if and only if it is the restriction of the class of an acyclic orientation on $G$.
\end{prop}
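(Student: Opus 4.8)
The plan is to analyze when two short characteristic elements of $\flow(G)$ lie in the same coset of $\cX(\flow(G))$, and to translate the failure of uniqueness into the existence of an oriented cycle compatible with a chosen orientation. First I would fix an orientation $O$ on $G$ whose restriction $\chi$ lies in $\Short(\flow(G))$ — this is possible by the surjectivity of $\Short(C_1(G;\bZ)) \to \Short(\flow(G))$ recalled from \cite[Corollary 3.4]{mut}. Every other element of the coset $\chi + 2\flow(G)$ has the form $\chi - 2[C']$ for some $[C'] \in \flow(G)$, and using the regularity of $\flow(G)$ together with the description of primitive elements as oriented cycles, I would reduce the question of whether such an element is also short to a question about oriented cycles: $\chi$ fails to be the unique short element in its coset precisely when there is an oriented cycle $C$ in $G$ for which $\chi \cdot [C] \geq |E(C)|$ (equivalently, replacing $O$ on $C$ by the reversed orientation does not increase the self-pairing). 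By the concrete description of $\chi \cdot [C]$ as a signed edge count, this inequality holds if and only if every edge of $C$ is oriented in $O$ the same way it is in $C$, i.e.\ $O$ contains the edge set of the oriented cycle $C$. So the crux is: $\chi$ is \emph{not} the unique short element of its coset $\iff$ $O$ contains an oriented cycle $\iff$ $O$ is not acyclic.

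To make the ``$\iff$'' rigorous in both directions I would argue as follows. For the easy direction, if $O$ contains an oriented cycle $C$, then $\chi \cdot [C] = |E(C)| = [C]\cdot[C]$, so $(\chi - 2[C])\cdot(\chi - 2[C]) = \chi\cdot\chi - 4\chi\cdot[C] + 4[C]\cdot[C] = \chi\cdot\chi$, exhibiting a second short element in the coset (distinct from $\chi$ since $[C]\neq 0$); hence $\chi$ is not unique. Conversely, suppose $\chi' = \chi - 2\lambda$ is short with $\lambda \in \flow(G)$, $\lambda \neq 0$; shortness of both $\chi$ and $\chi'$ forces $\chi\cdot\lambda = \lambda\cdot\lambda$ (the self-pairings being equal and minimal). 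Now I would invoke regularity: write $\lambda$ as a sum of primitive elements $[C_1] + \cdots + [C_k]$ with supports contained in $\supp^{\pm}(\lambda)$, so that $[C_i]\cdot[C_j] \geq 0$ for all $i,j$ and $\sum_i \chi\cdot[C_i] = \sum_{i,j}[C_i]\cdot[C_j] \geq \sum_i [C_i]\cdot[C_i] = \sum_i |E(C_i)|$. Since $\chi\cdot[C_i] \leq |E(C_i)|$ always (as $\chi$ is the restriction of an orientation), each term must achieve equality, forcing $\chi\cdot[C_1] = |E(C_1)|$, which means $O$ contains the oriented cycle $C_1$; hence $O$ is not acyclic. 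Finally, the statement follows by contraposition, using the observation that \emph{whether} $\chi$ is the restriction of \emph{some} acyclic orientation is equivalent to the chosen $O$ being acyclic — any two orientations restricting to the same $\chi \in \Short(\flow(G))$ differ by a cut, and a cut applied to an acyclic orientation keeps it acyclic-or-not consistently; this last point is where I would be most careful, and it is the step I expect to be the main obstacle.

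The potential subtlety I flag is precisely that $\chi \in \Short(\flow(G))$ may be the restriction of several different orientations on $G$ (differing by elements of $\Short(\Cut(G))$, i.e.\ by oriented cuts), and the proposition's phrasing ``the restriction of the class of an acyclic orientation'' requires that \emph{existence of one acyclic lift} be equivalent to the uniqueness statement. I would handle this by showing that if $O$ and $O'$ are two orientations with $[O]|_{\flow(G)} = [O']|_{\flow(G)} = \chi$, then $O$ is acyclic if and only if $O'$ is: indeed $[O] - [O'] \in \Cut(G)$ is the class of an oriented cut $E(A,B)$, and reversing a cut in an orientation cannot create or destroy oriented cycles in an inconsistent way — more precisely, if $O$ has an oriented cycle $C$ then $\chi\cdot[C] = |E(C)|$ depends only on $\chi$, not on the lift, so $O'$ has $\chi\cdot[C] = |E(C)|$ too, which by the signed-edge-count description forces $O'$ to contain $C$ as an oriented cycle (or its reverse, but $\chi\cdot[C] = +|E(C)|$ picks out $C$ itself). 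This symmetric argument, combined with \Cref{lem:subgraph} to keep track of how subgraph classes decompose across $\flow(G)\oplus\Cut(G)$, closes the loop and yields the proposition. The rest is bookkeeping with self-pairings and the regularity decomposition, which is routine.
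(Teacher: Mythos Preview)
Your proof is correct and takes a genuinely different route from the paper's. For the hard direction (acyclic lift $\Rightarrow$ uniqueness), the paper works with two orientations $O_1,O_2$ whose restrictions lie in the same coset, writes $[O_1]-[O_2]=2[H]$, and invokes \Cref{lem:subgraph} to split $H$ as a disjoint union of an Eulerian subgraph $H_1$ and a cut $H_2$; reversing the cut $H_2$ preserves acyclicity and reduces to $O_1',O_2$ differing along the Eulerian $H_1$, which is then forced to be empty. You instead stay entirely inside $\flow(G)$: from $\chi-2\lambda$ also being short you extract $\chi\cdot\lambda=\lambda\cdot\lambda$, decompose $\lambda=\sum[C_i]$ by regularity, and squeeze the chain $\sum_i\chi\cdot[C_i]\le\sum_i|E(C_i)|\le\sum_{i,j}[C_i]\cdot[C_j]=\chi\cdot\lambda$ to force $\chi\cdot[C_i]=|E(C_i)|$ for every $i$, so $O$ contains each $C_i$. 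This is more elementary---it never actually needs \Cref{lem:subgraph}, and your closing citation of it is superfluous: the lift-independence of acyclicity already follows from the pairing identity $[O']\cdot[C]=\chi\cdot[C]=|E(C)|$ you wrote down, which pins $C$ inside any lift $O'$. The paper's approach buys a more structural picture (and the parallel statement for $\Cut(G)$ noted after the proof), but for this proposition alone your route is cleaner.
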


\begin{proof}
Suppose that $O_1$ and $O_2$ are orientations on $G$ and the restrictions of their classes define elements $\chi_1$ and $\chi_2$ in $\Short(\flow(G))$ that represent the same coset in $\cX(\flow(G))$.
Thus, we may write $\chi_1 = \chi_2 + 2 x$ with $x \in \Flow(G)$.
It follows that $[O_1] - ([O_2] + 2x) \in \Flow(G)^\perp = \Cut(G) \subset C_1(G;\bZ)$.
On the other hand, $[O_1] - [O_2] = 2[H]$, where $H$ is the oriented subgraph of $G$ consisting of oriented edges in $O_1$ which appear with the opposite orientation in $O_2$.
Since $\Cut(G) \subset C_1(G;\bZ)$ is primitive, we can write $2[H] - 2 x = 2 y$ with $y \in \Cut(G)$.
Hence $[H] = x + y$ with $x \in \Flow(G)$ and $y \in \Cut(G)$.
By \Cref{lem:subgraph}, it follows that $x = [H_1]$ and $y = [H_2]$, where $H_1$ is an Eulerian subgraph, $H_2$ is a oriented cut, and $H_1$ and $H_2$ have disjoint edge sets.
Let $O_1'$ denote the orientation obtained from $O_1$ by changing the directions on all of the edges in $H_2$.
Then $[O_1] - [O_1'] = 2[H_2]$ and $[O_1'] - [O_2] = 2[H_1]$.
The first identity implies that $[O_1]$ and $[O_1']$ both restrict to $\chi_1 \in \Short(\flow(G))$, and
the second implies that $O_1'$ and $O_2$ differ along the edges of the Eulerian subgraph $H_1$.

Now suppose that $O_1$ is acyclic.
Changing the directions on the edges of a oriented cut produces another acyclic orientation, so $O_1'$ is acyclic as well.
It follows that $E(H_1)$ is empty, so $O_1' = O_2$.
Therefore, $\chi_1 = \chi_2$ is unique in its equivalence class.

Suppose instead that $O_1$ contains a directed cycle $C$.
Let $O_2$ be the result of changing the directions of the edges on $C$ in $O_1$.
Then $[O_1]-[O_2] = 2 [C]$ implies that $\chi_1$ and $\chi_2$ are distinct short elements representing the same coset in $\cX(\flow(G)$).
\end{proof}

Likewise, an orientation is {\em strongly connected} if it does not contain the edge set of an oriented cut.
The same argument shows that an element of $\Short(\cut(G))$ is unique in its coset in $\cX(\cut(G))$ if and only if it is the restriction of the class of a strongly connected orientation on $G$.


\subsection{Flows and the $d$-invariant.}
\label{ss:dinvt}

The $d$-invariant of a positive definite integral lattice records the self-pairings of its short characteristic elements \cite[Sections 2.4]{mut}.
More precisely, we define a function
\[
d_\Lambda : \cX(\Lambda) \to \bQ, \quad d_\Lambda(s) := \min \left\{ \frac{\chi \cdot \chi - \rk(\Lambda)}4 : \chi \in s \right\}.
\]
The pair $(\cX(\Lambda),d_\Lambda)$ is the {\em $d$-invariant of $\Lambda$}.

If $\Lambda_1$ and $\Lambda_2$ are two positive definite, integral lattices, then an {\em isomorphism }of their $d$-invariants 
is a bijection $\varphi: \cX(\Lambda_1) \to \cX(\Lambda_2)$ covering a group isomorphism $\Lambda_1^* / \Lambda_1 \overset{\sim}{\to} \Lambda_2^* / \Lambda_2$ with the additional property that $d_{\Lambda_1} = d_{\Lambda_2} \circ \varphi$.

The following result is implicit in \cite[Theorem 3.8]{mut}, but we make it explicit here and supply a careful argument.

\begin{prop}
\label{prop:rigid}
Suppose that $\Lambda$ is a lattice whose $d$-invariant is isomorphic to that of $\flow(G)$, where $G$ denotes a 2-edge-connected graph.
Then $\Lambda$ is isometric to a stabilization of $\flow(G)$.
\end{prop}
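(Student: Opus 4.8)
The plan is to leverage the cited result \cite[Theorem 3.8]{mut}, which already controls lattices sharing the $d$-invariant of a graph flow lattice, and to extract from its proof the precise rigidity statement we need. First I would recall what \cite[Theorem 3.8]{mut} provides: for a 2-edge-connected graph $G$, any lattice $\Lambda$ with $d$-invariant isomorphic to that of $\flow(G)$ must itself be a graph flow lattice — in fact $\flow(G')$ for a graph $G'$ obtained from $G$ by an explicit sequence of moves (subdivisions, and the like) that do not change $\flow$ up to stabilization. The key point to isolate is that the moves relating $G$ and $G'$ change $\flow(G)$ only by orthogonal summands $(e)\cong\bZ$, i.e. $\flow(G') \cong \flow(G) \oplus \bZ^n$ for some $n \ge 0$. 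This is exactly the conclusion that $\Lambda$ is isometric to a stabilization of $\flow(G)$.

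Concretely, the steps I would carry out are: (1) Fix an isomorphism $\varphi$ of $d$-invariants $\cX(\Lambda) \to \cX(\flow(G))$ covering an isomorphism $\Lambda^*/\Lambda \to \flow(G)^*/\flow(G)$ of determinant groups. In particular $\det(\Lambda) = \det(\flow(G))$ and, since $d$-invariants are defined on rank-normalized self-pairings, a comparison of the minimal values of $d$ (attained at a characteristic element of smallest norm) together with the identity $\rk(\Lambda^*/\Lambda\text{-torsor size}) $ pins down $\rk(\Lambda)$ relative to $\rk(\flow(G))$; write $\rk(\Lambda) = \rk(\flow(G)) + n$ with $n \ge 0$, the inequality because a lattice realizing a given $d$-invariant and determinant cannot have smaller rank than the ``minimal'' model. (2) Invoke \cite[Theorem 3.8]{mut} to conclude $\Lambda \cong \flow(G')$ for a graph $G'$ in the explicit family it describes, related to $G$ by the moves in that paper. (3) Verify — this is where one reads off the structure of those moves — that each move alters the flow lattice only up to adding an orthogonal $\bZ$ summand: e.g. edge subdivision is a flow-lattice isometry, while adding a pendant edge or a parallel edge contributes a summand $(e)$. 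Chaining the moves gives $\flow(G') \cong \flow(G) \oplus \bZ^n$, and matching ranks from step (1) forces this $n$ to be the one found there. Hence $\Lambda \cong \flow(G) \oplus \bZ^n$, as claimed.

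The main obstacle is step (3): making precise which operations \cite[Theorem 3.8]{mut} allows and checking that all of them are ``stabilizing'' moves on the flow lattice rather than genuinely altering its indecomposable part. The 2-edge-connectedness hypothesis on $G$ is what rules out the degenerate behavior — a 1-edge-cut (bridge) corresponds to a $\bZ$ summand already present, and 2-edge-connectedness guarantees $\flow(G)$ has no such split-off $(e)$ factor, so that $G$ is the ``reduced'' model and all the ambiguity in \cite{mut} is concentrated in genuine stabilization. If the statement of \cite[Theorem 3.8]{mut} is phrased in terms of $d$-invariants of pairs $(\flow(G), \cut(G))$ or of the full lattice $C_1(G;\bZ)$ rather than $\flow(G)$ alone, a small additional argument is needed to pass from the flow-lattice $d$-invariant to the hypotheses of that theorem; this should follow from the fact (used already in \Cref{ss:char}) that $\Short(C_1(G;\bZ)) \to \Short(\flow(G))$ is surjective, so the flow-lattice $d$-invariant is determined by, and here determines, the relevant data.
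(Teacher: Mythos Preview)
Your proposal has a genuine gap, and it does not follow the paper's route.

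The paper does \emph{not} invoke \cite[Theorem~3.8]{mut} as a black box. Instead it uses the more primitive ingredients from that paper: by \cite[Corollary~3.4]{mut} the $d$-invariant of $\flow(G)$ is minus that of $\cut(G)$, so one obtains an isomorphism between $-d_{\cut(G)}$ and $d_\Lambda$. Then \cite[Proposition~2.7]{mut} glues $\cut(G)\oplus\Lambda$ into a Euclidean lattice $\bZ^m$ as complementary primitive sublattices with surjective restriction to short elements. The 2-edge-connectedness hypothesis enters via \cite[Lemma~3.2]{mut}, which feeds into the rigidity statement \cite[Proposition~2.8]{mut}: the embedding $\cut(G)\hookrightarrow\bZ^m$ must factor through the standard one $\cut(G)\hookrightarrow C_1(G;\bZ)$. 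Taking orthogonal complements then identifies $\Lambda$ with a stabilization of $\flow(G)$. The cut lattice and the gluing construction are the heart of the argument; you bypass both.

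Your step~(2) is speculative --- you are guessing at the content of \cite[Theorem~3.8]{mut}, and since the present proposition is described as ``implicit'' in it, citing it may well be circular. More concretely, your step~(3) contains errors. Edge subdivision is \emph{not} a flow-lattice isometry: a cycle through the subdivided edge gains one in self-pairing. Adding a pendant edge leaves $\flow$ unchanged (no new cycles), so contributes no $\bZ$ summand. Adding a parallel edge $e'$ to $e$ creates a 2-cycle of square~2, and this cycle pairs nontrivially with every pre-existing cycle through $e$, so $\flow(G')$ is not $\flow(G)\oplus(e')$ either. Since none of your candidate moves actually stabilize the flow lattice in the way you need, the mechanism you propose for reaching $\flow(G)\oplus\bZ^n$ does not work. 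Your rank inequality in step~(1) is also unjustified as stated: there is no general principle that a lattice realizing a prescribed $d$-invariant has rank bounded below by that of a given model without further input.
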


\noindent
We remark that both the result and its proof hold with the roles of $\cut(G)$ and $\flow(G)$ reversed, but we shall not need it here.

\begin{proof}
By \cite[Corollary 3.4]{mut}, there exists an isomorphism $\psi$ between {\em minus} the $d$-invariant of $\cut(G)$ and the $d$-invariant of $\flow(G)$.
Compose $\psi$ with an isomorphism to the $d$-invariant of $\Lambda$ to obtain an isomorphism $\varphi $ between minus the $d$-invariant of $\cut(G)$ and the $d$-invariant of $\Lambda$.
By \cite[Proposition 2.7]{mut}, we can glue $\cut(G)$ and $\Lambda$ using $\varphi$ to obtain an embedding of $\cut(G) \oplus \Lambda$ into a Euclidean lattice $\bZ^m$ with the property that the images of $\cut(G)$ and $\Lambda$ are complementary, primitive sublattices of $\bZ^m$ and the restriction maps $\Short(\bZ^m) \to \Short(\cut(G))$ and $\Short(\bZ^m) \to \Short(\Lambda)$ surject.
By \cite[Lemma 3.2]{mut} and the hypothesis that $G$ is 2-edge-connected, the embedding $\cut(G) \into C_1(G;\bZ) =: Z_1$ satisfies part of the hypothesis of \cite[Proposition 2.8]{mut}, while we have just shown that the embedding $\cut(G) \into \bZ^m =: Z_2$ satisfies the complementary part.
Therefore, \cite[Proposition 2.8]{mut} shows that the embedding $\cut(G) \into \bZ^m$ is the composite of the embedding $\cut(G) \into C_1(G;\bZ)$ with an embedding $C_1(G;\bZ) \into \bZ^m$.
The second embedding is simply a stabilization of $C_1(G;\bZ)$, since both are Euclidean lattices.
It follows that the orthogonal complement to the image of $\cut(G)$ in $\bZ^m$ is isometric to a stabilization of the orthogonal complement to the image $\cut(G)$ in $C_1(G;\bZ)$, which is $\flow(G)$, by \cite[Proposition 3.1]{mut}.
But we already identified the orthogonal complement of the embedding of $\cut(G)$ into $\bZ^m$ with $\Lambda$, based on the remark that they are complementary and primitive in $\bZ^m$.
It follows that $\Lambda$ is isometric to a stabilization of $\flow(G)$, as required.
\end{proof}


\section{Cube tilings and flows on graphs}
\label{sec:cubegraphs}

The goal of this section is to characterize which planar graphs $G$ have the property that $\flow(G)$ is a 2-cube tiling lattice (\Cref{thm:2cubeflow}).
It leads directly to the characterization of which chessboard-colored alternating link diagrams $D$ have the property that $\Lambda(D)$ is a 2-cube tiling lattice (\Cref{cor:2cubediagram}).
Throughout the section, we abuse notation and write $\flow(G)=\Lambda_H$ when we have fixed an isomorphism between the two lattices.
We correspondingly use equality to identify elements of $\flow(G)$ with their images in $\Lambda_H$.

\hajos's theorem implies that every 2-cube tiling lattice of positive rank contains an irreducible element of square 4.
If the lattice is also the flow lattice of a graph, then \Cref{prop:flowirred} implies that this element corresponds to a 4-cycle $C$.

\begin{prop}
\label{prop:planarhajos}
Let $G$ be a planar graph such that $\flow(G)$ is a 2-cube tiling lattice of rank $n>0$, let $C$ be an oriented 4-cycle in $G$, and let $R_1$ and $R_2$ be the regions $C$ bounds in $S^2$.
Then there exist
\begin{itemize}
\item
a \hajos\ matrix $H$;
\item
oriented cycles $C_1,\dots,C_n \subset G$; and
\item
a cyclic ordering $e_1,\dots,e_4$ of the edges of $C$
\end{itemize}
such that
\begin{enumerate}
\item
$\flow(G)=\Lambda_H$;
\item
$[C_i]=h_i$ is the $i$-th column of $H$ for $i=1,\dots,n$; 
\item
$C_n = C$; and
\item
for each $i < n$, $C_i$ is contained in one of the regions $R_j$, and either
\begin{itemize}
\item
the last entry of $h_i$ is 1 and $C_i \cap C_n = e_j \cup e_{j+1}$, or else
\item
the last entry of $h_i$ is 0 and $C_i \cap C_n$ is a single vertex or empty.
\end{itemize}
\end{enumerate}

\end{prop}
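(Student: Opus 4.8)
The plan is to extract the \hajos\ matrix from the $4$-cycle $C$ using \Cref{lem:square4}, recover the cycles $C_i$ from its columns via the structure theory of Sections~\ref{sec:cubes} and~\ref{sec:graphs}, and then pin down the position of each $C_i$ relative to $C$ with \Cref{lem:pwsnest}.

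First I would note that $[C]\cdot[C]=4$, since $C$ has four edges, so \Cref{lem:square4} provides a \hajos\ matrix $H$ together with an isomorphism $\flow(G)\cong\Lambda_H$ carrying $[C]$ to the last column $h_n=2e_n$ of $H$. Identifying $\flow(G)=\Lambda_H$ along this isomorphism (which preserves the form, so that $[C_i]\cdot[C_j]=h_i\cdot h_j$), each column $h_i$ is a \hajos\ vector, hence irreducible by \Cref{lem:hajosirred}, hence the class $[C_i]$ of an oriented cycle $C_i\subset G$ by \Cref{prop:flowirred}(c). Since an oriented cycle is determined by its class in $C_1(G;\bZ)$, the identity $[C_n]=h_n=[C]$ forces $C_n=C$. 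This establishes items (1)--(3). I would then fix an arbitrary cyclic ordering $e_1,\dots,e_4$ of the edges of $C=C_n$, noting that items (1)--(3) do not depend on it and that it only enters item (4).

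For item (4), fix $i<n$. Since $h_n=2e_n$ and the last entry of $h_i$ lies in $\{0,1\}$, we have $[C_i]\cdot[C_n]=h_i\cdot h_n=2\cdot(\text{last entry of }h_i)\in\{0,2\}$. Write $\overline{C_n}$ for $C_n$ with reversed orientation; then $[C_i]+[\overline{C_n}]=h_i-h_n$, while reversal changes neither the underlying subgraph of $C_n$ nor the property of being nested. By \Cref{lem:hajosirred}, $h_i-h_n$ is irreducible when the last entry of $h_i$ equals $1$ (so $h_i\cdot h_n=2$) and rigid when it equals $0$ (so $h_i\cdot h_n=0$); correspondingly $[C_i]\cdot[\overline{C_n}]$ equals $-2<0$ or $0$, so in the two cases \Cref{lem:pwsnest}(1) resp.\ \Cref{lem:pwsnest}(2) applies to the pair $C_i,\overline{C_n}$. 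In both cases $C_i$ and $C_n$ are nested, with $[C_i]\cdot[\overline{C_n}]=-|E(C_i\cap C_n)|$ and $C_i\cap C_n$ connected or empty: when the last entry is $1$ this makes $C_i\cap C_n$ a path of two edges, necessarily $e_j\cup e_{j+1}$ for some $j$ because $C_n$ is a $4$-cycle; when it is $0$ it makes $C_i\cap C_n$ edgeless, hence a single vertex or empty. Finally, $C_n=C$ bounds precisely the two disks $R_1,R_2$ in $S^2$, so nestedness means $C_i$ bounds a disk that is contained in one of $R_1,R_2$ or contains one of them; in the latter situation the complementary disk $\overline{S^2\setminus D_i}$ has boundary $C_i$ and lies in the other region, so in either case $C_i$ is contained in one of $R_1,R_2$. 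That is exactly item (4).

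The step I expect to require the most care is this last one: converting the abstract "nested" condition into the statement that $C_i$ is contained in one of the regions $R_j$. One must track which of the two disks bounded by $C$ plays the role of the larger disk and, when $C_i$'s disk contains a region rather than sitting inside one, pass to the complementary disk to land inside the other region. The remaining ingredients --- identifying the last entry of $h_i$ with $\tfrac{1}{2}(h_i\cdot h_n)$, reversing $C_n$ so that the product is non-positive as \Cref{lem:pwsnest} demands, and invoking \Cref{lem:hajosirred} and \Cref{prop:flowirred}(c) --- are routine bookkeeping.
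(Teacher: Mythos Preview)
Your argument correctly establishes items (1)--(3) and the parts of item (4) that concern a \emph{single} index $i$: that $C_i$ lies in one of the regions bounded by $C$, that $C_i\cap C_n$ is a path of two edges when the last entry of $h_i$ is $1$, and a single vertex or empty when it is $0$. But you have misread the full content of item (4). The index $j$ in ``$C_i$ is contained in one of the regions $R_j$'' is the \emph{same} $j$ as in ``$C_i\cap C_n=e_j\cup e_{j+1}$''. Thus the proposition asserts that, for a suitably chosen cyclic ordering of the edges of $C$, every $C_i\subset R_1$ with last entry $1$ meets $C$ in $e_1\cup e_2$, and every $C_i\subset R_2$ with last entry $1$ meets $C$ in $e_2\cup e_3$. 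This is a coherence statement across \emph{all} $i$ simultaneously, and it is what makes $C$ a decomposing cycle in the sequel. Fixing the cyclic ordering arbitrarily, as you do, cannot work; the ordering is part of the output and must be chosen after analysing how the $C_i$ sit around $C$.

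The paper supplies the missing global argument. It first invokes the Helly-type \Cref{lem:nested} to show that all cycles $C_i$ with last entry $1$, together with $C_n$, share a common edge $e'$. It then shows that all such $C_i$ lying in the same region $R_\ell$ share the same length-two path $\{e',e'_\ell\}$ on $C_n$ (this uses pairwise nestedness). The delicate point, handled by a contradiction argument using the \hajos\ matrix entries, is that if both regions are inhabited then $e'_1\neq e'_2$. Only then can one set $e_1=e'_1$, $e_2=e'$, $e_3=e'_2$ to obtain the required cyclic ordering. Your proof omits \Cref{lem:nested} and this entire line of reasoning, so it does not establish item (4).
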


\begin{figure}[t]
\centering
\includegraphics[scale=0.6]{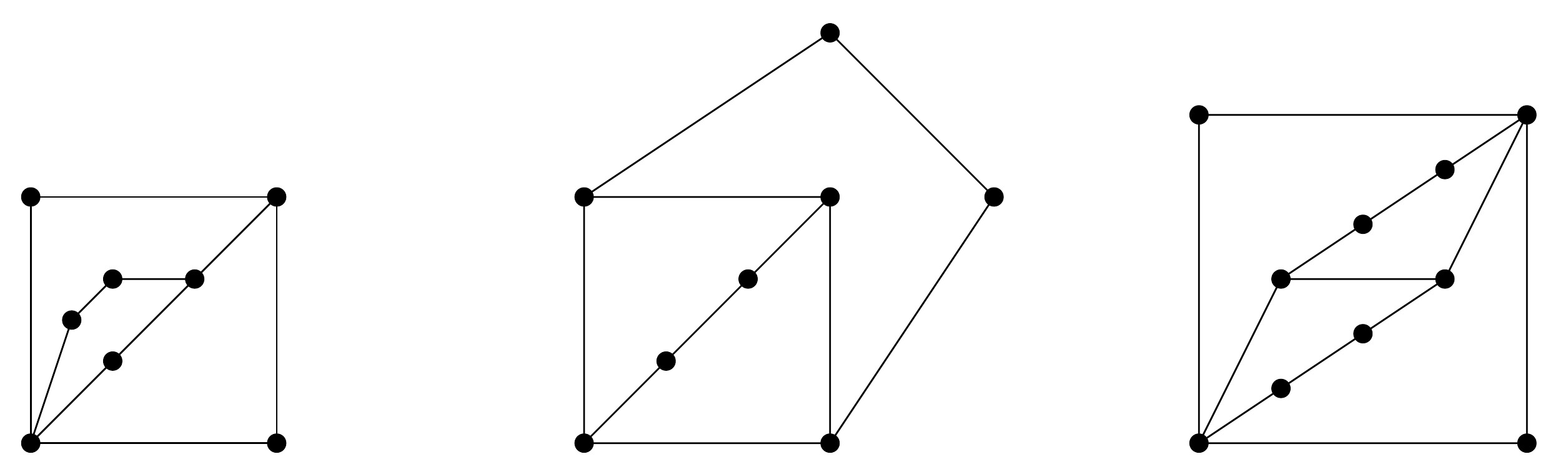}
\put(-450,100){$G_1$}
\put(-300,100){$G_2$}
\put(-140,100){$G_3$}
\put(-450,-30){$H_1=\begin{bmatrix} 2& 0& 0\\ 1& 2& 0\\ 1& 1& 2\end{bmatrix}$}
\put(-300,-30){$H_2=\begin{bmatrix} 2& 0& 0\\ 0& 2& 0\\ 1& 1& 2\end{bmatrix}$}
\put(-140,-40){$H_3=\begin{bmatrix} 2& 0& 0& 0\\ 0& 2& 0&0\\ 1& 1& 2& 0\\ 1& 1& 1& 2\end{bmatrix}$}
\caption
{
\label{fig:planarexamples}
Some planar graphs $G_i$ with 2-cube tiling flow lattices $\Lambda_{H_i}$, $i=1,2,3$.}
\end{figure}

\begin{proof}
By \Cref{lem:hajos}, $\flow(G)$ is isomorphic to $\Lambda_H$ for some \hajos\ matrix $H$.  
By \Cref{lem:hajosirred}, the \hajos\ basis elements $h_1,\dots,h_n$ are irreducible.
By \Cref{prop:flowirred}, they therefore take the form $h_i=[C_i]$ for some oriented cycles $C_1,\dots,C_n$ in $G$.
By \Cref{lem:square4}, we may suppose that $C_n=C$.
These remarks establish (1)-(3).

By \Cref{lem:hajosirred}, $[C_i] - [C_j] = h_i - h_j$ is either irreducible or rigid, for all $i \ne j$.
Hence \Cref{lem:pwsnest} implies that the cycles $C_1,\dots,C_n$ are pairwise nested and $|E(C_i \cap C_j)| = [C_i] \cdot [C_j]$ for all $i$ and $j$.
In addition, $|E(C_i \cap C_n)|= h_i \cdot h_n \in \{0,2\}$ depending on whether the last entry of $h_i$ is 0 or 1.
Another look at \Cref{lem:pwsnest} gives all of (4) apart from the statement that $C_i \cap C_n = e_j \cup e_{j+1}$ when $C_i \subset R_j$.

Let $I$ consist of those $i\in \{1,\dots,n-1\}$ with $h_i \cdot h_n = [C_i]\cdot [C_n]>0$, or equivalently with the last entry of $h_i$ equal to 1.
By \Cref{lem:nested}, the cycles $\{C_i\,:\,i\in I\cup\{n\}\}$ are nested and contain a common edge $e' \in E(C_n)$.  
We partition $I = I_1 \sqcup I_2$, where $I_\ell$ consists of those $i \in I$ such that $C_i \subset R_\ell$ for $\ell=1,2$.

By \Cref{lem:pwsnest}, $C_i \cap C_n$ is a path of length $[C_i] \cdot [C_n] = h_i \cdot h_n = 2$ for all $i \in I$.
Select a pair of distinct indices $i,j \in I$.
If $C_i$ and $C_j$ are contained in the same region $R_\ell$, then because $C_i$ and $C_j$ are nested, it follows that $C_i \cap C_n = C_j \cap C_n$ is the same path of length 2.
This means that $C_n$ contains edges $e'_1$ and $e'_2$, consecutive with $e'$ along $C_n$, such that $C_i \cap C_n = \{e',e'_\ell\}$ for all $i \in I_\ell$ and $\ell=1,2$.

We claim that if $I_1, I_2 \ne \emptyset$, then $e'_1 \ne e'_2$.
(Note that if one of $I_1$ or $I_2$ is empty, then we may vacuously take $e'_1 \ne e'_2$.)
For suppose that $C_i \subset R_1$ and $C_j \subset R_2$ and, for a contradiction, that $e'_1 = e'_2$.
It follows that $h_i \cdot h_j = [C_i] \cdot [C_j] = |E(C_i \cap C_j)| = |\{e',e'_1\}| = 2$.
Hence $h_i$ and $h_j$ both have $k$-th entry equal to 1 for some $i,j<k<n$.
The cycle $C_k$ is contained in the opposite region $R_\ell$ from one of $C_i$ and $C_j$; without loss of generality, say it is $C_i$.
Then the intersection of any pair of $C_i$, $C_k$, and $C_n$ is $\{e',e'_1\}$.
This implies that $h_k \cdot h_n = 2$, so the last entry of $h_k$ equals 1.
Considering the $k$-th and $n$-th entries of $h_i$ and $h_k$, this implies that $h_i \cdot h_k \ge 1 \cdot 2 + 1 \cdot 1 = 3$, in contradiction of the fact that $h_i \cdot h_k = |E(C_i) \cap E(C_k)| = 2$.
Thus, $e'_1 \ne e'_2$.

Lastly, we choose the cyclic ordering of the edges of $C$ by taking $e_1 = e'_1$, $e_2 = e'$, and $e_3 = e'_2$.
This completes the proof of (4) and hence the proposition.
\end{proof}

Suppose that $G$ is a planar graph.
A {\em decomposing cycle} in $G$ is a 4-cycle $C$ such that
\begin{itemize}
\item
the vertices of $C$ are $v_1,v_2,v_3,v_4$ in cyclic order;
\item
the edges of $C$ are $e_1,e_2,e_3,e_4$ in cyclic order;
\item
$C$ divides $S^2$ into regions $R_1$ and $R_2$; and
\item
vertex $v_i$ has degree 2 in the subgraph $G \cap R_j$ whenever $i \not\equiv j \pmod 2$.
\end{itemize}
Thus, $G$ takes the form shown on the left side of Figure \ref{fig:planarhajos}.

\begin{figure}
\centering
\includegraphics[width=4in]{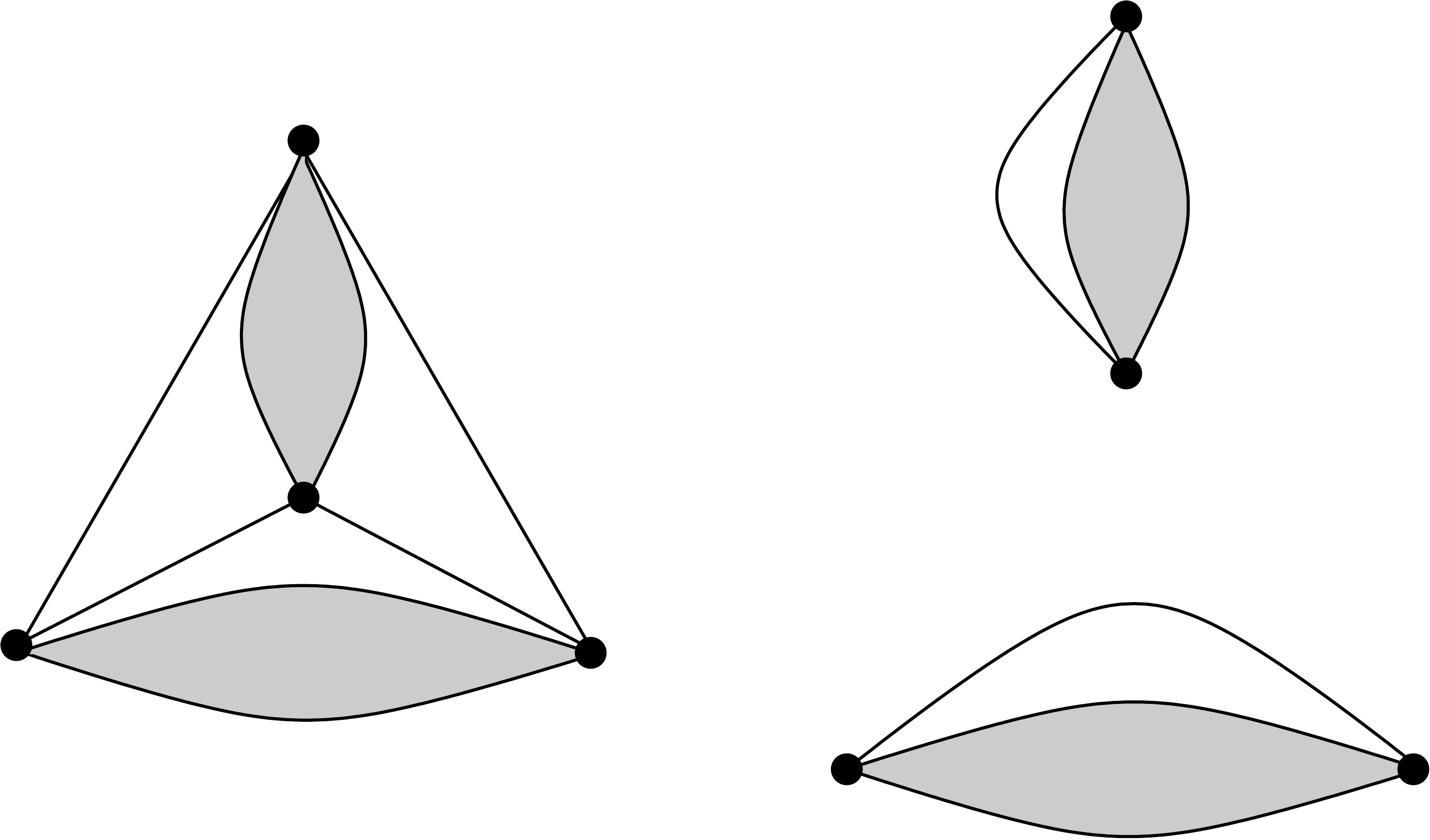}
\put(-300,130){$G$}
\put(-233,100){$J_1$}
\put(-233,35){$J_2$}
\put(-233,150){$v_1$}
\put(-305,40){$v_2$}
\put(-233,57){$v_3$}
\put(-163,40){$v_4$}
\put(-280,90){$e_1$}
\put(-260,65){$e_2$}
\put(-210,65){$e_3$}
\put(-185,90){$e_4$}
\put(-125,165){$G_1$}
\put(-68,125){$J_1$}
\put(-68,175){$v_1$}
\put(-68,82){$v_3$}
\put(-105,125){$f_1$}
\put(-120,50){$G_2$}
\put(-68,10){$J_2$}
\put(-140,15){$v_2$}
\put(3,15){$v_4$}
\put(-30,45){$f_2$}
\caption
{\label{fig:planarhajos}
A planar graph $G$ with a decomposing cycle and its factors $G_1$ and $G_2$.
}
\end{figure}

\begin{cor}
\label{cor:IImut}
Let $G$ be a 2-connected planar graph such that $\flow(G)$ is a 2-cube tiling lattice.
Then every 4-cycle in $G$ is a decomposing cycle.
\end{cor}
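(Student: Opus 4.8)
The plan is to apply \Cref{prop:planarhajos} to the given $2$-connected planar graph $G$ and translate its conclusion into the combinatorial structure defining a decomposing cycle. Let $C$ be an arbitrary $4$-cycle in $G$, with regions $R_1$ and $R_2$ as in the statement of \Cref{prop:planarhajos}. First I would invoke that proposition to obtain a \hajos\ matrix $H$ with $\flow(G) = \Lambda_H$, oriented cycles $C_1,\dots,C_n$ representing the columns $h_1,\dots,h_n$ of $H$, and a cyclic ordering $e_1,e_2,e_3,e_4$ of the edges of $C$, such that $C_n = C$ and each $C_i$ ($i<n$) lies in one of the regions $R_j$ with $C_i \cap C_n$ equal either to a consecutive pair $e_j \cup e_{j+1}$ (when the last entry of $h_i$ is $1$) or to a single vertex or the empty set (when that entry is $0$).

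Next I would extract the degree condition on the vertices $v_1,v_2,v_3,v_4$ of $C$. Label the vertices of $C$ so that $v_i$ is the common endpoint of $e_{i-1}$ and $e_i$ (indices mod $4$), matching the picture in \Cref{fig:planarhajos}. The key observation is that every edge of $G$ other than the four edges of $C$ belongs to some cycle $C_i$ (since the $C_i$ form a basis of $\flow(G)$, hence their supports cover $E(G)$ up to the cut lattice — more carefully, $G$ being $2$-connected and $\flow(G)$ having full rank $n = |E(G)| - |V(G)| + 1$ forces every edge to lie in the support of some basis element, as the cut lattice meets $\flow(G)$ trivially and bonds through a bridge are absent). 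Each such $C_i$ lies entirely in the closure of one region $R_j$, and where it meets $C$ it does so along either $\{e_{j-1}, e_j\}$ (a path through a single vertex $v_j$ of $C$) or in at most one vertex. In the first case the two edges of $C_i$ incident to that shared vertex on the $C$-side are $e_{j-1}$ and $e_j$, so $C_i$ enters and leaves $R_j$ only at $v_j$; in the second case $C_i$ touches $C$ at a single vertex, which again must be one of the $v_i$. I would then argue that it is precisely the vertices $v_i$ with $i \equiv j \pmod 2$ that can carry edges of $G \cap R_j$ going into the interior: if a cycle $C_i \subset \overline{R_j}$ met $C$ only at $e_{j-1}\cup e_j$ then its remaining part lies in the open region and attaches only at $v_j$. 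Combining over all basis cycles in $R_j$, and using that $G$ is $2$-connected so there are no cut vertices, yields that the only vertex of $C$ of degree exceeding $2$ inside $\overline{R_j}$ is $v_j$ when we collapse all $i$ of the same parity — i.e., after relabeling by the cyclic order supplied by \Cref{prop:planarhajos}, $v_i$ has degree $2$ in $G \cap R_j$ whenever $i \not\equiv j \pmod 2$, which is exactly the defining condition of a decomposing cycle.

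The main obstacle I anticipate is the bookkeeping in the previous paragraph: ensuring that the cyclic labeling of the edges (and hence the vertices) of $C$ produced by \Cref{prop:planarhajos} is globally consistent across \emph{all} basis cycles $C_i$ simultaneously, not just pairwise, so that a single choice of which parity class of vertices is ``fat'' in $R_1$ versus $R_2$ works. This is really where the nestedness and the Helly-type statement in \Cref{lem:nested} do the work: all the $C_i$ with a $1$ in a common late coordinate are nested and share a common edge of $C$, forcing their intersections with $C$ to be the \emph{same} consecutive pair $e_{j-1}\cup e_j$ within each region, and the argument in the proof of \Cref{prop:planarhajos} already shows the two regions' distinguished pairs overlap in exactly the edge $e_2 = e'$ — so the pairs are $\{e_1,e_2\}$ and $\{e_2,e_3\}$, meeting $C$ at $v_2$ and $v_3$ respectively, leaving $v_1$ and $v_4$ of degree $2$ in the appropriate regions. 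A secondary, more routine, point is justifying that every edge of $G$ outside $C$ lies on one of the basis cycles $C_i$; I would handle this by noting $\supp$ of the $h_i$ spans $E(G)$ modulo $\cut(G)$, that $\flow(G) \cap \cut(G) = 0$, and that no edge of a $2$-connected loopless graph is a bridge, so any edge not in any $\supp(C_i)$ would have to lie in a nontrivial cut class disjoint from all flows, which is impossible. Assembling these observations gives that $C$ satisfies all four bullet points in the definition, completing the proof.
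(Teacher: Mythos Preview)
Your approach has a genuine gap. You correctly invoke \Cref{prop:planarhajos} and observe that each basis cycle $C_i \subset R_1$ with last entry $1$ meets $C$ in the fixed path $e_1 \cup e_2$, so the interior portion of such a $C_i$ attaches to $C$ only at $v_1$ and $v_3$. But you do not control the basis cycles $C_i \subset R_1$ with last entry $0$: \Cref{prop:planarhajos}(4) says only that $C_i \cap C$ is then a single vertex or empty, without specifying \emph{which} vertex. If some such $C_i$ meets $C$ precisely at $v_2$ (or $v_4$), then both edges of $C_i$ at that vertex lie in the interior of $R_1$, giving $v_2$ (or $v_4$) degree greater than $2$ in $G \cap R_1$. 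Your discussion of nestedness and the Helly-type property in the ``main obstacle'' paragraph addresses only the cycles with last entry $1$ and leaves this case untouched; the assertion that ``it is precisely the vertices $v_i$ with $i \equiv j \pmod 2$ that can carry edges of $G \cap R_j$ going into the interior'' is exactly the content of the corollary, and your edge-coverage argument does not establish it. (Your secondary point, that every edge of a $2$-connected graph lies on some basis cycle, is correct and easy.)

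The paper's proof sidesteps this difficulty by a different mechanism. Rather than tracking which vertex each basis cycle touches, it notes that the cycles $\{C_i : C_i \subset R_1,\, i<n\} \cup \{C_n\}$ generate $\flow(G \cap R_1)$, and each generator uses $e_1$ if and only if it uses $e_2$ (with the same sign). Hence \emph{every} cycle in $G \cap R_1$ has this property. It then argues by contradiction: if $v_2$ had a third incident edge $e = (v_2,v)$ in $G \cap R_1$, then by $2$-connectivity of $G \cap R_1$ there is a path from $v$ to $v_1$ avoiding $v_2$, and closing it up with $e$ and $e_1$ produces a cycle in $G \cap R_1$ containing $e_1$ but not $e_2$. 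The same reasoning with the other edge pairs handles $v_4$ and the vertices in $R_2$. This argument is indifferent to how the single-vertex basis cycles are distributed among the $v_i$.
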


\begin{proof}
Let $C$ be a 4-cycle in $G$ with vertices $v_1,v_2,v_3,v_4$ and edges $e_1=(v_1, v_2),e_2,e_3,e_4$ in cyclic order.
Apply \Cref{prop:planarhajos}.
The flow lattice of $G \cap R_1$ is generated by classes of cycles, each of which uses $e_2$ if it uses $e_1$.
It follows that every cycle in $G \cap R_1$ uses $e_2$ if it uses $e_1$.
Suppose for a contradiction that the degree of $v_2$ in $G \cap R_1$ is greater than 2, and choose an edge $e=(v_2,v)$ different from $e_1$ and $e_2$.
Since $G$ is 2-connected, so is $G \cap R_1$.
It follows that there exists a path $P$ from $v$ to $v_1$ in $G \cap R_1$ avoiding $v_2$.
Then the union of $P$ with the edges $e_1$ and $e$ forms an oriented cycle in $G$ which meets $C$ in the edge $e_1$ but not $e_2$, a contradiction.
It follows that $v_2$ has degree 2 in $G \cap R_1$, and a similar argument shows that $v_i$ has degree 2 in $G \cap R_j$ whenever $i \not\equiv j \pmod 2$.
Hence $C$ is a decomposing cycle, as desired.
\end{proof}

Let $G$ be a planar graph with a decomposing cycle $C$.
Form a planar graph $G_1$ out of the subgraph $G \cap R_1$ by contracting $e_2$ and deleting $e_3$, $v_4$, and $e_4$.
Similarly, form a planar graph $G_2$ out of the subgraph $G \cap R_2$ by contracting $e_2$ and deleting $e_1$, $v_1$, and $e_4$.
We call $G_1$ and $G_2$ the {\em factors} of $G$ {\em decomposed along} $C$.
Again, see \Cref{fig:planarhajos}.

\begin{lem}
\label{lem:decompose}
Suppose that $G$ is a planar graph, $C$ is a decomposing cycle in $G$, and $G_1$ and $G_2$ are the factors of $G$ decomposed along $C$.
\begin{enumerate}
\item
$G$ is 2-connected if and only if $G_i$ is 2-connected, $i=1,2$.
\item
$\flow(G)$ is a 2-cube tiling lattice if and only if $\flow(G_i)$ is a 2-cube tiling lattice, $i=1,2$.
\end{enumerate}
\end{lem}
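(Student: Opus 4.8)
The plan is to work entirely on the graph side, using the explicit structure that the decomposing cycle imposes on $G$. Write $J_\ell$ for the subgraph of $G$ lying in the closed region $\overline{R_\ell}$ with the four edges of $C$ deleted; the degree-$2$ hypothesis says exactly that $J_1$ meets $C$ only in $\{v_1,v_3\}$ and $J_2$ meets $C$ only in $\{v_2,v_4\}$, so $G = C \cup J_1 \cup J_2$ with the two flaps attached at disjoint pairs of vertices of opposite parity, and $G_\ell$ is nothing more than $J_\ell$ with a single extra edge $f_\ell$ joining its two attaching vertices. Part (1) is then a routine inspection of cut-vertices and loops: $G$ and $G_\ell$ agree in a neighborhood of any interior vertex of $J_\ell$, and a vertex among $v_1,\dots,v_4$ fails to be a cut-vertex of $G$ precisely when the relevant flap is internally connected and joined to its two attaching vertices in the manner that $2$-connectivity of $G_\ell$ supplies; carrying this out in both directions gives (1).

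For (2) I would first record two bookkeeping identities. Mayer--Vietoris for $G = (G\cap\overline{R_1})\cup_C(G\cap\overline{R_2})$, or an Euler-characteristic count, gives $\rk\flow(G) = \rk\flow(G_1)+\rk\flow(G_2)+1$, the extra $1$ coming from $[C]$; and deletion--contraction on the edges of $C$ together with the matrix--tree theorem gives $\det\flow(G) = 4\,\det\flow(G_1)\,\det\flow(G_2)$. These already force $\ud(\flow(G)) = \ud(\flow(G_1))\,\ud(\flow(G_2))$, and they also control indices of sublattices, which is all the arithmetic the proof needs.

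For the forward implication of (2), suppose $\flow(G)=\Lambda_H$ is a $2$-cube tiling lattice. Apply \Cref{prop:planarhajos} to the $4$-cycle $C$: this produces a \hajos\ matrix $H$ with columns $h_1,\dots,h_n$, oriented cycles $C_1,\dots,C_n=C$ with $[C_i]=h_i$, and the nesting and intersection data of its part (4). Partition $\{1,\dots,n-1\} = I_1\sqcup I_2$ by which region $R_\ell$ contains $C_i$. The key point is that planarity forces $H$ to be block lower triangular across this partition: cycles in opposite open regions can share only edges of $C$, and \Cref{prop:planarhajos}(4) restricts that overlap to at most one edge, so any nonzero below-diagonal entry of $H$ whose indices lie in different parts would force a further cycle $C_m$ with $m$ larger than both of those indices lying in both regions at once, which is impossible. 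After reordering the \hajos\ basis so that the indices of $I_1$ precede those of $I_2$ precede $n$, $H$ therefore has \hajos\ diagonal blocks $H_1,H_2$ on $I_1,I_2$ and final column $2e_n=[C]$. Transplanting the cycles $\{C_i : i\in I_\ell\}$ into $G_\ell$ (each becomes a cycle of $G_\ell$ after contracting $e_2$), one computes that their Gram matrix is exactly $H_\ell^{\mathsf T}H_\ell$, because contracting $e_2$ deletes precisely the coordinate that accounts for the discrepancy $h_i\cdot h_j - (H_\ell^{\mathsf T}H_\ell)_{ij}$; combined with the rank and determinant identities this forces $\flow(G_\ell)\cong\Lambda_{H_\ell}$, a $2$-cube tiling lattice by \Cref{thm:tiling}. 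For the converse I would run this in reverse: given \hajos\ matrices $H_1,H_2$ with $\flow(G_\ell)=\Lambda_{H_\ell}$, realize their \hajos\ bases by pairwise nested cycles in $G_\ell$ using \Cref{prop:flowirred}, \Cref{lem:hajosirred}, \Cref{lem:pwsnest}, and \Cref{lem:nested}, transplant these cycles into $G$ by replacing $f_\ell$ where it occurs with the appropriate two-edge arc of $C$, adjoin $C$ itself, and assemble the block lower-triangular \hajos\ matrix $H$ with diagonal blocks $H_1,H_2$, trailing column $2e_n$, and last-row entries the $0/1$ indicators recording which transplanted cycles use the shared edge; the rank identity makes the transplanted classes independent and the determinant identity $\det\flow(G) = 4\cdot 4^{\rk H_1}\cdot 4^{\rk H_2} = (\det H)^2$ makes them span, so $\flow(G)=\Lambda_H$.

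The main obstacle is the block-triangularity claim in the forward direction — that no \hajos\ basis cycle of $\flow(G)$ lying in one region feeds into one lying in the other in the triangular expansion of $H$ — which rests entirely on planarity through \Cref{prop:planarhajos}(4). A secondary technical point is reconciling the cyclic edge-labeling output by \Cref{prop:planarhajos} with the given labeling of the decomposing cycle, so that the $R_1$-cycles really do use $e_1,e_2$ and hence survive the deletion of $e_3,e_4$ in passing to $G_1$; I would handle this with the dihedral symmetry of the $4$-cycle $C$, which permutes the possible labelings while leaving the pair $\{G_1,G_2\}$ unchanged up to isomorphism.
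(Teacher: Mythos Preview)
Your approach to (2) is close to the paper's but takes an unnecessary detour and has a gap in justification. The paper's forward direction is simpler: rather than arguing about block structure of $H$ directly, it forms the intermediate graph $G'$ by deleting $e_4$ and contracting $e_2$, checks that the images $C_1',\dots,C_{n-1}'$ of the Haj\'os cycles satisfy $[C_i']\cdot[C_j']=h_i'\cdot h_j'$ where $h_i'$ is $h_i$ with its last entry removed (so $\flow(G') = \Lambda_{H'}$ for the top-left $(n-1)\times(n-1)$ block $H'$ of $H$), observes that $G_1,G_2$ are exactly the two blocks of $G'$, and then invokes \Cref{cor:cubedecompose}. This sidesteps your block-triangularity claim, your rank/determinant bookkeeping, and the edge-labeling reconciliation you flag at the end.

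Your block-triangularity claim happens to be true, but the justification you give --- ``any nonzero below-diagonal entry \dots\ would force a further cycle $C_m$ \dots\ lying in both regions at once'' --- does not make sense; no such $C_m$ is produced by a matrix entry. The actual reason is a one-line inner-product bound: if $i<j<n$ lie in different parts and $H_{ji}=1$, then since all entries of $h_i,h_j$ are nonnegative one has $h_i\cdot h_j \ge (h_i)_j(h_j)_j = 2$; but $C_i$ and $C_j$ lie in opposite closed regions and so meet only along $C$, and \Cref{prop:planarhajos}(4) forces $E(C_i\cap C_j)\subseteq\{e_2\}$, whence $[C_i]\cdot[C_j]\le 1$, a contradiction. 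With this fix your route works, though it is longer than the paper's.

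For (1) the paper uses a planar circle criterion (a circle in $S^2$ meeting $G$ in at most one point with material on both sides detects disconnection, a cut-vertex, or a loop) rather than your direct cut-vertex inspection; both are routine.
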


\begin{proof}
(1) A planar graph is disconnected if and only if there exists a circle in the plane which is disjoint from the graph and contains at least one vertex of the graph to either side.
A planar graph contains a cut-vertex or a loop if and only if there exists a circle in the plane whose intersection with the graph is a single vertex and contains at least one edge of the graph to either side.
It is easy to check that the existence of such a circle for $G$ implies one for either $G_1$ or $G_2$, and vice versa.
This establishes the statement about 2-connectivity.

(2) Suppose first that $\flow(G)$ is a 2-cube tiling lattice.
Apply \Cref{prop:planarhajos}.
Note that for $i < n$, the last entry of $h_i$ is 1 if $C_i$ contains the edge $e_2$ and 0 if not.
Also, $C_n$ is the only cycle amongst $C_1,\dots,C_n$ which contains the edge $e_4$.
Let $G'$ denote the graph obtained by deleting the edge $e_4$ and contracting the edge $e_2$ in $G$.
Since the classes $[C_1],\dots,[C_n]$ generate $\flow(G)$, it follows that the classes $[C_1'],\dots,[C_{n-1}']$ generate $\flow(G')$, where $C_i' \subset G'$ denotes the image of the cycle $C_i$.
Let $H'$ denote the \hajos\ matrix obtained by deleting the final row and column from $H$.
Let $h_i'$ denote the $i$-th column of $H'$.
Then it is direct to check that $h_i' \cdot h_j' = [C_i'] \cdot [C_j']$ for all $1 \le i,j < n$.
Hence $\flow(G') = \Lambda_{H'}$ is a 2-cube tiling lattice.
Moreover, $G_1$ and $G_2$ are isomorphic to the blocks of $G'$, since each is 2-connected and they meet at a cut-vertex of $G'$.
Hence $\flow(G') = \flow(G_1) \oplus \flow(G_2)$, and \Cref{cor:cubedecompose} implies that $\flow(G_i)$ is a 2-cube tiling lattice, $i=1,2$.

Conversely, suppose that $\flow(G_i)$ is a 2-cube tiling lattice, $i=1,2$.
Write $\flow(G_i) = \Lambda_{H_i}$ for a \hajos\ matrix $H_i$, $i=1,2$ by \Cref{prop:planarhajos}.
Form the block sum $H' = H_1 \oplus H_2 \oplus (2)$.
Let $h_i$ be a column of $H_1$ or $H_2$.
Then $h_i = [C_i]$ for a cycle $C_i$ in $G_1$ or $G_2$.
Change the last entry in the $i$-th column of $H'$ to a 1 if $C_i$ contains the distinguished edge of the graph it is in.
Then it is direct to check the resulting matrix $H$ is a \hajos\ matrix which satisfies $\flow(G) = \Lambda_H$.
\end{proof}

Decomposition along a decomposing cycle is reversible, as in \Cref{fig:planarhajos}.
In words, suppose we are given a pair of planar graphs $G_1$ and $G_2$ with distinguished non-loop edges $f_1 = (v_1,v_3) \in E(G_1)$ and $f_2 = (v_2,v_4) \in E(G_2)$.
Draw $G_1 \sqcup G_2$ on $S^2$ in such a way that $f_1$ and $f_2$ abut to the same region.
Remove $f_1$ and $f_2$ and put a 4-cycle $C$ on $(v_1,v_2,v_3,v_4)$ with edges $(e_1,e_2,e_3,e_4)$ to obtain the planar graph $G$.
We call $G$ a {\em composition} of $G_1$ and $G_2$.
Observe that $G$ depends on the drawings of $G_1$ and $G_2$ on $S^2$, even after distinguishing the edges $f_1$ and $f_2$.
For instance, we get different compositions by rotating either drawing of $J_i = G_i \setminus \{f_i \}$, $i=1,2$, by $180^\circ$.  Note also that the number of edges of a composition is equal to the sum of the numbers of edges of its factors, plus 2.

Define a set of planar graphs $\cG$ as follows.
Let \singleton denote the graph with one vertex and no edges, and let \edge denote the graph with two vertices and one edge between them.
Recursively define $\cG$ by declaring that \singleton, \edge $\in \cG$ and that $G \in \cG$ whenever $G$ is a composition of two graphs $G_1, G_2 \in \cG$.

\begin{thm}
\label{thm:2cubeflow}
Let $G$ be a finite planar graph.
Then $\flow(G)$ is a 2-cube tiling lattice if and only if the blocks of $G$ belong to $\cG$.
Moreover, $G$ is 2-connected with $\flow(G)$ a 2-cube tiling  lattice if and only if $G \in \cG$.
\end{thm}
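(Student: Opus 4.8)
The plan is to prove \Cref{thm:2cubeflow} by induction on the number of edges, using the decomposition/composition operation along decomposing cycles as the engine, with the accumulated machinery of Sections \ref{sec:cubes}, \ref{sec:graphs} doing the heavy lifting. First I would reduce the two claims to one: since $\flow(G)$ is the orthogonal direct sum of the $\flow(B)$ over blocks $B$ of $G$ (\Cref{prop:flowirred}(a)), and a 2-cube tiling lattice decomposes as an orthogonal sum iff each summand does (\Cref{cor:cubedecompose}), the first assertion follows from the second applied to each block. Also, composition of 2-connected graphs yields a 2-connected graph and conversely the factors of a 2-connected graph along a decomposing cycle are 2-connected (\Cref{lem:decompose}(1)). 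So it suffices to show: for $G$ 2-connected, $\flow(G)$ is a 2-cube tiling lattice iff $G \in \cG$.

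For the ``if'' direction I would induct on edges: the base cases \singleton\ (rank-0 lattice, vacuously a 2-cube tiling lattice) and \edge\ (rank-0 again, since a single edge is not a loop) are immediate, and if $G$ is a composition of $G_1, G_2 \in \cG$ then by induction $\flow(G_i)$ is a 2-cube tiling lattice, so by \Cref{lem:decompose}(2) so is $\flow(G)$. One must note that composition strictly increases edge count (the composition has $|E(G_1)| + |E(G_2)| + 2$ edges, as remarked before \Cref{thm:2cubeflow}), so the induction is well-founded; also that the graphs in $\cG$ really are the 2-connected ones arising this way, which one checks by the same edge-count induction together with \Cref{lem:decompose}(1).

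For the ``only if'' direction — which I expect to be the main obstacle — suppose $G$ is 2-connected with $\flow(G)$ a 2-cube tiling lattice of rank $n$. If $n = 0$ then $G$ has no cycles, so being 2-connected it is \singleton\ or \edge, both in $\cG$. If $n > 0$, then by \hajos's theorem $\flow(G)$ contains an irreducible element of square $4$ (this is exactly the observation preceding \Cref{prop:planarhajos}: a \hajos\ vector has square $4$ and is irreducible by \Cref{lem:hajosirred}), which by \Cref{prop:flowirred}(c) is the class of an oriented $4$-cycle $C$ in $G$. By \Cref{cor:IImut}, $C$ is a decomposing cycle. Let $G_1, G_2$ be the factors of $G$ decomposed along $C$; by \Cref{lem:decompose}, each $G_i$ is 2-connected with $\flow(G_i)$ a 2-cube tiling lattice, and each $G_i$ has strictly fewer edges than $G$ (we removed $e_4$, contracted $e_2$, and split the remaining edges between the two factors, a net loss of at least two edges). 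By the inductive hypothesis, $G_1, G_2 \in \cG$. Since $G$ is a composition of $G_1$ and $G_2$ — this is precisely the reversibility of the decomposition operation recorded in the paragraph after \Cref{lem:decompose} — we conclude $G \in \cG$, completing the induction.

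The delicate point, and where I would be most careful, is the bookkeeping in that last induction: one must verify that decomposing along $C$ genuinely reduces the edge count (so induction applies), and that the pair $(G_1, G_2)$ with their distinguished edges $f_1, f_2$, when recombined by the composition operation with the appropriate planar embedding, returns a graph whose flow lattice is isometric to $\flow(G)$ — but this is exactly what \Cref{lem:decompose}(2) and the reversibility discussion establish, so no new argument is needed, only correct invocation. A minor subtlety worth a sentence: the definition of composition depends on choices of planar embeddings of $G_1, G_2$, so ``$G$ is a composition of $G_1$ and $G_2$'' must be read as ``for some such choices,'' which is what the factorization construction provides. With these points noted, the proof is a clean strong induction on $|E(G)|$.
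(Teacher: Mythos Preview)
Your proposal is correct and follows essentially the same approach as the paper's proof: induction on the number of edges, reduction to blocks via \Cref{prop:flowirred}(a) and \Cref{cor:cubedecompose}, and in the 2-connected positive-rank case, locating a 4-cycle (via the \hajos\ square-4 irreducible element and \Cref{prop:flowirred}(c)), invoking \Cref{cor:IImut} to see it is a decomposing cycle, and then applying \Cref{lem:decompose} to the factors. Your write-up is in fact somewhat more explicit than the paper's---you spell out the ``if'' direction and the edge-count bookkeeping where the paper simply says ``the converse direction follows from reversing the argument''---but the ingredients and structure are identical.
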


\begin{proof}
We proceed by induction on the number of edges.
The base case of 0 or 1 edges is clear, so we proceed on to the induction step.

Suppose that $\flow(G)$ is a 2-cube tiling lattice.
\Cref{prop:flowirred}(a) asserts that $\flow(G)$ decomposes as a direct sum of indecomposable sublattices $\flow(B)$, where $B$ is a block of $G$.
\Cref{cor:cubedecompose} implies that $\flow(B)$ is a 2-cube tiling lattice for each block $B$.
If $B$ is acyclic, then $B = \text{\singleton}$ or $B = \text{\edge}$, hence $B \in \cG$.
Otherwise, $\flow(B)$ has positive rank, so the remark before \Cref{prop:planarhajos} implies that it contains a 4-cycle $C$.
\Cref{cor:IImut} implies that $C$ is a decomposing cycle.
Let $B_1$ and $B_2$ denote the factors of $B$ decomposed along $C$.
Both of $B_1$ and $B_2$ have fewer edges than $B$, and \Cref{lem:decompose} implies that $B_i$ is 2-connected and $\flow(B_i)$ is a 2-cube tiling lattice, $i=1,2$.
By induction, $B_1, B_2 \in \cG$, so $B \in \cG$, as well.
This completes the induction step.
Note that if $G$ is 2-connected, then the argument shows that $G = B \in \cG$.

The converse direction follows from reversing the argument.
\end{proof}

We now translate \Cref{thm:2cubeflow} into the language of link diagrams.
Recall that the Tait graph construction establishes a one-to-one correspondence between connected chessboard-colored alternating link diagrams and connected planar graphs.
Let $D$ be such a diagram and $G(D)$ its black Tait graph.
Thus, $\Lambda(D) = \flow(G(D))$.
Under the correspondence, nugatory crossings in $D$ which touch a single white region are in one-to-one correspondence with cut-edges in $G(D)$, while nugatory crossings in $D$ which touch a single black region are in one-to-one correspondence with loops in $G(D)$.
Hence $D$ is reduced if and only if $G(D)$ is loopless and does not contain a cut-edge, i.e., no block of $G(D)$ contains a single edge.
Similarly, reducing circles in $D$ are in one-to-one correspondence with circles in the plane meeting $G(D)$ in a vertex and containing at least one edge to either side.
Hence the prime summands of $D$ are in one-to-one correspondence with the blocks of $G(D)$.

For the translation into link diagrams, we refer to the set $\cD$ from the introduction.

\begin{cor}
\label{cor:2cubediagram}
Suppose that $D$ is a chessboard-colored connected alternating diagram.
Then $\Lambda(D)$ is a 2-cube tiling lattice if and only if $D$ is a connected sum of diagrams in $\cD$.
In addition, if $\Lambda(D)$ is a 2-cube tiling lattice then $D$ is prime if and only if $D \in \cD$, and $D$ is reduced if and only if it is a connected sum of diagrams in $\cD \setminus \{ \text{\onecrossing}\!\! \}$.
Lastly, if $D$ is a composition of $D_1$ and $D_2$, then $D \in \cD$ if and only if $D_1$ and $D_2 \in \cD$.
\end{cor}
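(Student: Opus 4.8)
The plan is to translate \Cref{thm:2cubeflow} and \Cref{lem:decompose} through the Tait graph dictionary. Recall from the paragraph preceding the corollary that $D \mapsto G(D)$ is a bijection between connected chessboard-colored alternating diagrams and connected planar graphs, that $\Lambda(D) = \flow(G(D))$, that the prime summands of $D$ correspond to the blocks of $G(D)$, and that $D$ is reduced exactly when no block of $G(D)$ is a single edge. Granting these facts, the corollary reduces to one further dictionary entry: under $D \mapsto G(D)$, the diagram composition of \Cref{fig:D} corresponds to the graph composition defined just before \Cref{thm:2cubeflow}, with \round\ and \onecrossing\ corresponding to \singleton\ and \edge. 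The two base cases are immediate, since the crossingless diagram has a single black region and no crossings, and \onecrossing\ has two black regions joined by the single edge through its crossing.

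To establish the composition correspondence I would inspect \Cref{fig:D} directly. On the level of Tait graphs, removing a crossing ball from $D_i$ singles out the edge $f_i$ through that crossing, and the two black regions meeting there become a pair of boundary arcs of the removed ball. After forming the tangle sum, inserting the new unknotted component along the circle where the sum is taken, and adding four crossings to restore alternation, one obtains a diagram $D$ in which the new component has no self-crossings and exactly four crossings with the rest of $D$; these four crossings form a $4$-cycle $C$ in $G(D)$. Tracing the chessboard coloring around $C$ shows that the two black regions it bounds satisfy the degree condition defining a decomposing cycle, that they contain copies of $G(D_1)$ and $G(D_2)$ with the distinguished edges $f_1, f_2$ removed, and that the factors of $G(D)$ decomposed along $C$ are precisely $G(D_1)$ and $G(D_2)$. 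The two available tangle sums correspond to the two essentially distinct assemblies of a graph composition, differing by the $180^\circ$ rotation of one factor noted before \Cref{thm:2cubeflow}. Conversely, under the bijection $D \mapsto G(D)$ the decomposing circles of $D$ (components with no self-crossings and four crossings with the rest of $D$) correspond to the decomposing cycles of $G(D)$, so every graph composition is realized by a diagram composition. Matching the two recursions term by term then gives $D \in \cD$ if and only if $G(D) \in \cG$. This inspection is the only substantive step; the main obstacle is carrying it out carefully enough to confirm that the decomposing-cycle degree conditions and the two-fold tangle-sum ambiguity match on the nose.

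The four assertions are then formal. Since every graph in $\cG$ is $2$-connected, it is a single block, so each diagram in $\cD$ is prime, and a prime summand $D'$ of $D$ lies in $\cD$ if and only if the corresponding block $G(D')$ of $G(D)$ lies in $\cG$. By the first clause of \Cref{thm:2cubeflow}, $\Lambda(D) = \flow(G(D))$ is a $2$-cube tiling lattice if and only if every block of $G(D)$ lies in $\cG$; equivalently, every prime summand of $D$ lies in $\cD$; equivalently, $D$ is a connected sum of diagrams in $\cD$. Now assume $\Lambda(D)$ is a $2$-cube tiling lattice. Then $D$ is prime if and only if $G(D)$ is $2$-connected, which, since $\flow(G(D))$ is a $2$-cube tiling lattice, amounts by the second clause of \Cref{thm:2cubeflow} to $G(D) \in \cG$, that is, to $D \in \cD$. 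Likewise $D$ is reduced if and only if no block of $G(D)$ is a single edge; as every block of $G(D)$ lies in $\cG$, as \edge\ is the only single-edge graph in $\cG$, and as \edge\ is the black Tait graph of \onecrossing, this holds if and only if no prime summand of $D$ equals \onecrossing, that is, if and only if $D$ is a connected sum of diagrams in $\cD \setminus \{\text{\onecrossing}\}$. Finally, suppose $D$ is a composition of $D_1$ and $D_2$, so that $G(D)$ is the graph composition of $G(D_1)$ and $G(D_2)$. If $D_1, D_2 \in \cD$ then $D \in \cD$ directly from the definition of $\cD$; conversely, if $D \in \cD$ then $G(D) \in \cG$, so $G(D)$ is $2$-connected with $2$-cube tiling flow lattice by \Cref{thm:2cubeflow}, whence \Cref{lem:decompose} makes each $G(D_i)$ $2$-connected with $2$-cube tiling flow lattice, so $G(D_i) \in \cG$ and $D_i \in \cD$.
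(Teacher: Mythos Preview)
Your proof is correct and follows essentially the same approach as the paper: establish that $D \in \cD \iff G(D) \in \cG$ by matching the base cases and the composition operations under the Tait graph correspondence, then read off all four assertions from \Cref{thm:2cubeflow} and \Cref{lem:decompose}. The paper's version is terser---it simply asserts the composition correspondence rather than tracing through \Cref{fig:D} as you do---but the logical structure and the lemmas invoked are identical.
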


\begin{proof}
Under the Tait graph correspondence, $G($\round$\!\!) = \text{\singleton}$, $G(\text{\onecrossing}\!\!) = \text{\edge}$, and $D$ is a composition of $D_1$ and $D_2$ if and only if $G(D)$ is a composition of $G(D_1)$ and $G(D_2)$.
Hence $D \in \cD$ if and only if $G(D) \in \cG$.

Write $D = D_1 \# \cdots \# D_k$ as a connected sum of prime diagrams.
Then $G(D)$ has blocks $G(D_1), \cdots, G(D_k)$.
Hence $D$ is a connected sum of diagrams in $\cD$ if and only if each $G(D_i)$ belongs to $\cG$.
\Cref{thm:2cubeflow} now gives the initial statement of the Corollary.
Since $D$ is prime if and only if $G(D)$ is 2-connected, and since $D$ is reduced if and only if no block of $G(D)$ consists of a single edge, \Cref{thm:2cubeflow} gives the next statement as well.
The final statement comes from \Cref{lem:decompose}.
\end{proof}

\Cref{thm:2cubeflow} leads to a simple algorithm to test whether a planar graph $G$ has the property that $\flow(G)$ is a 2-cube tiling lattice.
First, decompose $G$ into its blocks.
Next, given a block, test whether it contains a 4-cycle.
If so, test whether the cycle is a decomposing cycle, and if so, decompose along it to get a pair of planar graphs $G_1$ and $G_2$ with $|E(G_1)| + |E(G_2)| = |E(G)| - 2$.
Iterate until reaching a graph with a 4-cycle that is not a decomposing cycle or a collection of graphs none of which contains a 4-cycle.
Then $\flow(G)$ is a 2-cube tiling lattice if and only if the resulting collection of graphs are copies of \singleton and \edge.

The algorithm of the introduction simply translates this one into the language of link diagrams.


\section{Cubes and ribbons}
\label{sec:cuberibbons}

In this section, we extend the techniques developed in \Cref{sec:cubegraphs} to prove \Cref{thm:generalization2}.
The bulk of the argument is contained in the following result:

\begin{prop}
\label{prop:2cube}
Suppose that $D_1$ and $D_2$ are reduced alternating diagrams of $\chi$-slice, $\ud=1$ alternating links $L_1$ and $L_2$, where $D_1 \in \cD \setminus \{ \round \}$.
Suppose that $\Lambda(L_2)$ admits a cubiquitous embedding into a stabilization of $\Lambda(L_1)$.
Then $D_1 \preceq D_2'$ for some prime summand $D_2'$ of $D_2$.
\end{prop}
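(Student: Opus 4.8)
The plan is to induct on the number of crossings of $D_1$, in parallel with the proof of \Cref{thm:2cubeflow}. Write $G_1 := G(D_1)$. Since $D_1$ is reduced and $D_1 \neq \round$, $G_1 \in \cG$ is $2$-connected and is neither a point nor a single edge, so $\flow(G_1) = \Lambda(L_1)$ has positive rank (and, by \Cref{lem:evenentry}, has no unit vectors, so it is not a stabilization and the generalized notion of cubiquity applies with $\Lambda_1 = \flow(G_1)$). Since $L_2$ is $\chi$-slice with $\ud(L_2) = 1$, \Cref{thm:GJ1} and the discussion of normalized determinants show that $\Lambda(L_2) = \flow(G(D_2))$ is a $2$-cube tiling lattice, so by \Cref{cor:cubedecompose} and \Cref{thm:2cubeflow} every block of $G(D_2)$ lies in $\cG$; in particular every prime summand of $D_2$ lies in $\cD$ and, as $D_2$ is reduced, none is $\round$ or $\onecrossing$. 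Fixing the cubiquitous embedding $\flow(G(D_2)) \hookrightarrow \flow(G_1) \oplus \bZ^m$, I would first reinterpret it: using \Cref{prop:cubiq} and multiplicativity of the determinant along the orthogonal decomposition $0 \to N \to \flow(G(D_2)) \to \flow(G_1) \to 0$, one sees that the projection $p \colon \flow(G(D_2)) \to \flow(G_1)$ is surjective, its kernel $N := \flow(G(D_2)) \cap (\{0\} \oplus \bZ^m)$ is (as a sublattice of $\bZ^m$) a $2$-cube tiling lattice, and $\rk \flow(G(D_2)) = \rk \flow(G_1) + m$.

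For the base case $\rk \flow(G_1) = 1$, the graph $G_1$ is the $4$-cycle and $D_1$ is the reduced $4$-crossing diagram. A short induction on the recursive definition of $\cG$ shows $D_1 \preceq B'$ for every prime $B' \in \cD \setminus \{\round, \onecrossing\}$: such a $B'$ is a composition of $B_1', B_2' \in \cD \setminus \{\round\}$, and if both are $\onecrossing$ then $B' = D_1$, while otherwise one of them again lies in $\cD \setminus \{\round, \onecrossing\}$ and, by induction, contains $D_1$ in the $\preceq$ order, whence so does $B'$ (using that composition is symmetric in its factors). Since $\rk \flow(G(D_2)) = m + 1 > 0$, the graph $G(D_2)$ has a nontrivial block, so $D_2$ has a prime summand $D_2' \in \cD \setminus \{\round, \onecrossing\}$, and then $D_1 \preceq D_2'$.

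For the inductive step $\rk \flow(G_1) \geq 2$: the graph $G_1$ contains a $4$-cycle $C$, which by \Cref{cor:IImut} is a decomposing cycle; let $(G_1)_1, (G_1)_2$ be its factors (both in $\cG$, with strictly fewer edges), relabelled so that $\rk \flow((G_1)_1) \geq 1$ (possible since $\rk \flow(G_1) = \rk \flow((G_1)_1) + \rk \flow((G_1)_2) + 1$), so that the diagram $(D_1)_1$ of $(G_1)_1$ is reduced and lies in $\cD \setminus \{\round\}$. Apply \Cref{prop:planarhajos} to $(G_1, C)$ to fix a \hajos\ matrix $H_1$ for $\flow(G_1)$ whose last column is $[C]$. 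The crux is then to transport the decomposing cycle $C$ through the embedding: using the surjection $p$, the cubiquity of $N$, and rigidity of the kind established in \cite{mut} for embeddings of graph lattices into Euclidean lattices (compare \Cref{prop:rigid}), one produces a prime summand $D_2'$ of $D_2$, a decomposing cycle $C_2$ in its black graph, and a labelling $B_1, B_2$ of the factors of $D_2'$ along $C_2$, so that $\flow(B_1)$ admits a cubiquitous embedding into a stabilization of $\flow((G_1)_1)$ while the complementary factor $B_2$ coincides with $(D_1)_2$. Since $(D_1)_1$ has strictly fewer crossings than $D_1$, the inductive hypothesis applied to $(D_1)_1$ and the reduced diagram with black graph $B_1$ yields $(D_1)_1 \preceq B_1$ (here $B_1$ is $2$-connected, hence its own prime summand). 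Finally, unwinding the definition of $\preceq$ along the chain of compositions realizing $(D_1)_1 \preceq B_1$, and using that two compositions along disjoint edges may be performed in either order, one gets $D_1 = (D_1)_1 * (D_1)_2 \preceq B_1 * B_2 = D_2'$, as desired.

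The main obstacle is the transport step: extracting from a cubiquitous embedding into $\flow(G_1) \oplus \bZ^m$ a single decomposing $4$-cycle in $G(D_2)$ that is ``aligned'' with $C$, whose removal leaves a lattice still cubiquitously embedded into a stabilization of the smaller lattice $\flow((G_1)_1)$, and whose complementary factor matches $(D_1)_2$. In lattice terms this requires controlling how the \hajos\ basis of $\flow(G(D_2))$ matches, under $p$, the \hajos\ basis of $\flow(G_1)$ furnished by \Cref{prop:planarhajos}, and pinning down the position of $N$ relative to that \hajos\ structure; this is precisely the flavour of rigidity proved in \cite{mut}, and combining it with the planar decomposing-cycle combinatorics of \Cref{sec:cubegraphs} is where the real work lies. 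A secondary, purely bookkeeping, subtlety is that $\preceq$ descends only into first factors, so one must exploit the symmetry and partial associativity of the composition operation in order to propagate it through the induction.
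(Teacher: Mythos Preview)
Your proposal has a genuine gap, and it stems from inducting on the wrong variable.  You induct on the crossing number of $D_1$ and therefore need to \emph{transport} a chosen decomposing cycle $C$ of $G_1$ to a decomposing cycle $C_2$ in a prime summand of $G(D_2)$, with the additional demand that one factor $B_2$ along $C_2$ \emph{coincide} with the factor $(D_1)_2$ along $C$.  You correctly identify this transport as the main obstacle, but you do not prove it: the appeal to ``rigidity of the kind established in \cite{mut}'' is a hope, not an argument, and the exact-match claim $B_2=(D_1)_2$ is stronger than anything the lattice data obviously forces.  The subsequent step, that $(D_1)_1\preceq B_1$ and $B_2=(D_1)_2$ together yield $(D_1)_1*(D_1)_2 \preceq B_1*B_2$, also requires a form of compatibility of $\preceq$ with composition that you only gesture at.

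The paper avoids all of this by inducting on $n$, the rank of the stabilization, and never touching $D_1$.  The manoeuvre is this: fix a cubiquitous embedding $\Lambda_1=\Lambda(L_1)\hookrightarrow\bZ^m$ with Haj\'os matrix $H_1$, and then the given cubiquitous embedding $\Lambda_2=\Lambda(L_2)\hookrightarrow\Lambda_1\oplus\bZ^n\subset\bZ^{m+n}$.  Because $\ud(\Lambda_1)=\ud(\Lambda_2)=1$, each unit cube $\lambda+x+\{0,1\}^n$ with $\lambda+x\in\Lambda_1\oplus\bZ^n$ contains a \emph{unique} point of $\Lambda_2$.  This lets you build a Haj\'os matrix $H$ for $\Lambda_2$ whose top-left $m\times m$ block is $H_1$: extend each column $h_i'$ of $H_1$ by the unique $x_i\in\{0,1\}^n$ with $h_i'+x_i\in\Lambda_2$, and then adjoin a Haj\'os basis for $\Lambda_2\cap\bigl((0)\oplus\bZ^n\bigr)$, which is itself a $2$-cube tiling lattice.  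The last column $h_{m+n}$ lies entirely in the $\bZ^n$ summand and has square $4$, so by \Cref{prop:planarhajos} it is $[C]$ for a decomposing cycle $C$ in $G(D_2)$.  Decomposing $D_2$ along $C$ into $D_{21}$ and $D_{22}$ and setting $D_3=D_{21}\#D_{22}$, the top-left $(m+n-1)\times(m+n-1)$ block of $H$ is a Haj\'os matrix exhibiting $\Lambda(L_3)$ as cubiquitous in $\Lambda_1\oplus\bZ^{n-1}$.  For $n=1$ this forces $D_1$ to be a reduced diagram of $L_3$, so one of $D_{21},D_{22}$ is $\text{\onecrossing}$ and $D_1\preceq D_2'$ for the appropriate prime summand; for $n>1$ one invokes the inductive hypothesis on $D_3$ and tracks the prime summand back to $D_2$.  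There is no transport step because the cycle removed lives in the stabilizing $\bZ^n$ by construction, and there is no associativity bookkeeping because $D_1$ is held fixed throughout.
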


\begin{proof}
Let $\Lambda_1$ denote the image of a cubiquitous embedding of $\Lambda(L_1)$ into $\bZ^m$.
Similarly, let $\Lambda_2$ denote the image of a cubiquitous embedding of $\Lambda(L_2)$ into $\Lambda_1 \oplus \bZ^n$.
The condition that $\Lambda_2 \subset \Lambda_1 \oplus \bZ^n$ is cubiquitous and and $\ud(\Lambda_1) = \ud(\Lambda_2)$ imply that
\begin{center}
($\star$) each cube $\lambda + x + \{0,1\}^n$, $\lambda + x \in \Lambda_1 \oplus \bZ^n$, contains a unique point of $\Lambda_2$.
\end{center}
Thus, $\Lambda_2$ is akin to a 2-cube tiling lattice of $\Lambda_1 \oplus \bZ^n$.

We proceed to build a \hajos\ basis for $\Lambda_2\subset\Lambda_1\oplus\bZ^n\subset\bZ^{m+n}$.
First, since $\Lambda_1$ is a 2-cube tiling lattice of $\bZ^m$, we can choose a \hajos\ matrix $H_1 = (h_1' \cdots h_m')$ for $\Lambda_1$.
By ($\star$), there exist unique elements $x_1,\dots,x_m \in \{0,1\}^n$ such that $h_i := h_i'+x_i \in \Lambda_2$ for $i=1,\dots,m$.
Next, applying ($\star$) to each point of $(0) \oplus \bZ^n$, we find that $\Lambda_2 \cap (0) \oplus \bZ^n$ is a 2-cube tiling lattice.
Thus, we can find a \hajos\ matrix $H_2 = (h_{m+1} \cdots h_{m+n})$ for $\Lambda_2 \cap (0) \oplus \bZ^n$.
Now we see that $H = (h_1 \cdots h_{m+n})$ is a \hajos\ matrix of $\Lambda_2$.
Its top left $m \times m$ matrix is $H_1$, and its bottom right $n \times n$ matrix is $H_2$.

By \Cref{prop:planarhajos}, the element $h_{m+n} \in \Lambda_2$ corresponds to a decomposing cycle $C$ in the Tait graph $G_2$ of $D_2$.
Let $G_{21}$ and $G_{22}$ denote the factors of $G_2$ decomposed along $C$, with corresponding diagrams $D_{21}$ and $D_{22}$.
Thus, $D_2$ is a composition of $D_{21}$ and $D_{22}$, each of which is a connected sum of diagrams in $\cD$.
Let $D_3$ denote a color-respecting connected sum of $D_{21}$ and $D_{22}$, and let $L_3$ denote the link it presents.
The top-left $(m+n-1) \times (m+n-1)$ matrix of $H$ is a \hajos\ matrix $H_3$ for $\Lambda_3 \simeq \Lambda(L_3)$.
Thus, $L_3$ is a $\ud=1$, $\chi$-slice alternating link, and $\Lambda_3$ is a 2-cube tiling lattice of $\Lambda_1 \oplus \bZ^{n-1}$.
If $n=1$ then in fact $D_1$ is a reduced diagram of $L_3$, and we conclude that one of $D_{21}$ and $D_{22}$ is $\text{\edge}$ and $D_2$ is a composition of $D_1$ and $\text{\edge}$.

If $n>1$ then by induction, we have $D_1 \preceq D'_3$ for some prime summand $D_3'$ of $D_3$.
Either $D_3'$ is itself a prime summand of $D_2$; or else $D_3'$ is a prime summand of one of $D_{21}$ and $D_{22}$, and it composes with a prime summand of the other to form a prime summand of $D_2$.
In either case, $D_1 \preceq D_2'$ for some prime summand $D_2'$ of $D_2$,  as desired.
\end{proof}

\begin{proof}
[Proof of \Cref{thm:generalization2}]
(1)$\implies$(2) follows from taking the branched double cover.

(2)$\implies$(3) was already noted \cite[Lemma 3.1]{dlvvw}.

(3)$\implies$(4) follows from \Cref{thm:ribboncubiquity}.

(4)$\implies$(5) is \Cref{prop:2cube}, specialized to the case in which $D_2$ is prime.

(5)$\implies$(1) follows just as in \Cref{thm:links2}, (5)$\implies$(1).
\end{proof}


\section{Cubiquity}
\label{sec:cubiquity}

In this section we explain how cubiquity issues from Heegaard Floer homology.

\subsection{Cubiquity and rational homology balls}

Let $Y$ be a rational homology 3-sphere, that is to say, a closed 3-manifold with $b_1(Y)=0$.
Recall from \cite{absgr} that the Heegaard Floer homology of \ozsvath\ and \szabo\ gives rise to a correction term invariant
\begin{align*}
d:\spinc(Y)&\longto\qq\\
\spinct&\longmapsto d(Y,\spinct).
\end{align*}
We will use two properties of this invariant.
First, if $Y$ bounds a rational homology ball $W$ and $\spinct \in \spinc(Y)$ extends to a \spinc\ structure on $W$, then $d(Y,\spinct) = 0$.
Second, if $Y$ bounds a positive-definite 4-manifold $X$ and $\spinct$ extends to a \spinc\ structure $\spincs$ on $X$, then
\begin{equation}
\label{eq:d}
d(Y,\spinct) \ge \frac{c_1(\spincs)^2-b_2(X)}4.
\end{equation}
(In fact, the second statement generalizes the first.)
We call $\spincs$ a {\em sharp extension} of $\spinct$ if it attains equality in \Cref{eq:d}, and we call
$X$ {\em sharp}
if each \spinc structure on $Y$ admits a sharp extension to $X$.

The lattice $d$-invariant is designed to model the $d$-invariant of the intersection lattice on a positive definite 4-manifold.
Keeping the notation above, suppose that the restriction from $\spinc(X)$ to $\spinc(Y)$ surjects, and let $\Lambda_X=(H_2(X;\zz)/\tors,Q_X)$ denote the intersection lattice of $X$.
The image of the map $c_1 : \Spc(X) \to H^2(X;\bZ)/\tors \simeq \Lambda_X^*$ is the set $\Char(\Lambda_X)$.
The group $H^2(Y;\bZ)$ is isomorphic with the determinant group $\Lambda_X^* / \Lambda_X$, and there is a torsor isomorphism $\varphi: \Spc(Y) \to \Char(\Lambda_X) / 2 \Lambda_X$ which covers the group isomorphism.
Inequality \eqref{eq:d} then amounts to the inequality $d(Y,\spincs) \ge d_{\Lambda_X}(\varphi(\spincs))$ for all $\spincs \in \Spc(Y)$, with equality if and only if $X$ is sharp.

\noindent
{\bf Example.}
Let $L$ be a non-split alternating link, and let $D$ be a reduced chessboard-colored alternating diagram of $L$.
The black regions of $D$, along with half-twisted bands at the crossings, form a spanning surface $B \subset S^3$ of $L$.
Push the interior of $B$ into that of a 4-ball bounded by $S^3$, and let $X = \Sigma(B)$ denote the double cover of the 4-ball branched along $B$.
Gordon and Litherland showed that the intersection lattice of $X$ is isometric to $\Lambda(L)$ \cite{GL}, and Ozsv\'ath and Szab\'o showed that $X$ is a sharp 4-manifold \cite{HFdbc}.
We call $X$ the {\em Gordon-Litherland manifold} filling $\Sigma(L)$.
(Strictly speaking, $X$ depends on the choice of diagram $D$ used to present $L$, but it can be shown that its diffeomorphism type is independent of this choice.)

The following obstruction to a rational homology 3-sphere bounding both a sharp 4-manifold and a rational homology ball follows quickly from work by the first author and Jabuka \cite{GJ}.  We give a proof here for the convenience of the reader, and because it sets up the proof of \Cref{thm:ribboncubiquity}.
\begin{thm}
\label{thm:GJ}
Let $Y$ be a rational homology 3-sphere which bounds both a sharp 4-manifold $X$ and a rational homology 4-ball $W$.  Then the intersection lattice of $X$ is cubiquitous.
\end{thm}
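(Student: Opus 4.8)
The plan is to run the gluing argument of the first author and Jabuka. Write $\Lambda = \Lambda_X$ for the intersection lattice of $X$ and $n = b_2(X) = \rk(\Lambda)$, and glue $X$ to an orientation-reversed copy $\overline{W}$ of $W$ along $Y$ to form a closed, oriented, smooth 4-manifold $Z = X \cup_Y \overline{W}$. Because $W$ is a rational homology ball, Mayer--Vietoris gives $H_2(Z;\bQ) \cong H_2(X;\bQ)$, so $Z$ is positive definite of rank $n$, and the inclusion $X \into Z$ induces an isometric embedding of $\Lambda$ as a full-rank sublattice of $(H_2(Z;\bZ)/\tors, Q_Z)$. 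Donaldson's diagonalization theorem identifies the latter lattice with the Euclidean lattice $\bZ^n$, so I obtain a full-rank isometric embedding $\Lambda \into \bZ^n$, and the task is to prove that this embedding is cubiquitous.

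The next step is to restate cubiquity in terms of characteristic vectors. By \Cref{prop:cubiq}(3), it suffices to show that every coset of $\Lambda$ in $\bZ^n$ meets $\{0,1\}^n$. Writing $\mathbf{1} = (1,\dots,1)$, the map $z \mapsto 2z - \mathbf{1}$ is a bijection $\{0,1\}^n \to \{\pm 1\}^n$, while $v \mapsto 2v - \mathbf{1}$ carries $\bZ^n$ onto the set of characteristic vectors of $\bZ^n$. Thus cubiquity is equivalent to the assertion that every characteristic vector of $\bZ^n$ is congruent modulo $2\Lambda$ to some element of $\{\pm 1\}^n$. (Since $\Lambda \subseteq \bZ^n$, a characteristic vector of $\bZ^n$ is automatically characteristic for $\Lambda$.)

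Now fix a characteristic vector $w \in \bZ^n = H_2(Z;\bZ)/\tors$ and pick $\mathfrak{s} \in \Spc(Z)$ with $c_1(\mathfrak{s}) \equiv w$ modulo torsion. The restriction $\mathfrak{t} = \mathfrak{s}|_Y$ extends over the copy of $W$ inside $Z$, so the first listed property of the $d$-invariant gives $d(Y,\mathfrak{t}) = 0$. On the other hand $\mathfrak{t}$ extends over $X$, and sharpness of $X$ gives $d(Y,\mathfrak{t}) = d_\Lambda(\varphi(\mathfrak{t}))$, where $\varphi(\mathfrak{t}) \in \cX(\Lambda)$ is the class of the image of $c_1(\mathfrak{s}|_X)$ in $\Lambda^*$. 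By naturality of Poincar\'e--Lefschetz duality this image equals the image of $w$ under the inclusion $\bZ^n \into \Lambda^*$ dual to $\Lambda \into \bZ^n$; hence every representative of $\varphi(\mathfrak{t})$ has the form $w + 2\lambda$ with $\lambda \in \Lambda$, which is again a characteristic vector of $\bZ^n$ and so has square at least $n$, with equality precisely for the vectors in $\{\pm 1\}^n$. Since $d_\Lambda(\varphi(\mathfrak{t})) = 0$ forces this minimum to be attained, some $w + 2\lambda_0 \in \{\pm 1\}^n$. This is exactly the congruence needed, so $\Lambda \into \bZ^n$ is cubiquitous.

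I expect the difficulty to be organizational rather than conceptual. The main obstacle is step (ii) below; alongside it there is a routine bookkeeping point (i). \emph{(i)} One must check that $H_2(X;\bZ)/\tors \to H_2(Z;\bZ)/\tors$ is an isometric embedding whose dual is the inclusion $\bZ^n \into \Lambda^*$ used to define $\varphi$; this is where the rational-homology-ball hypothesis on $W$ and the naturality of the intersection form enter. \emph{(ii)} One must keep orientation conventions straight when asserting that $\mathfrak{t} = \mathfrak{s}|_Y$ ``extends over $W$'' in the precise sense demanded by the vanishing $d(Y,\mathfrak{t}) = 0$. The only substantive external inputs are Donaldson's theorem and the sharpness hypothesis; the passage from $d$-vanishing to the congruence, and from the congruence to cubiquity, is elementary.
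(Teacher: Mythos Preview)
Your proposal is correct and follows essentially the same route as the paper's proof: glue $X$ and $W$ along $Y$, invoke Donaldson to get $\Lambda \hookrightarrow \bZ^n$, and use the vanishing of $d(Y,\spinct)$ together with sharpness to force a representative of each coset of $2\Lambda$ in $\Char(\bZ^n)$ into $\{\pm1\}^n$, then translate to cubiquity via \Cref{prop:cubiq}. The only cosmetic difference is that the paper explicitly constructs the spin$^c$ structure $\spincs'$ on $Z$ realizing the minimum (by gluing a sharp extension on $X$ to $\spincs|_W$), whereas you appeal directly to the definition of $d_\Lambda$ as a minimum over the coset $w + 2\Lambda$; these are two phrasings of the same step.
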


\begin{proof}
Form the smooth, closed, positive-definite 4-manifold $Z=X\cup_Y W$.
The Mayer-Vietoris sequence and Donaldson's Theorem A \cite{thmA} yield an embedding $\Lambda_X \into \Lambda_Z \cong \zz^n$.
We will show that the image is cubiquitous.

We have a map $c : \spinc(Z) \to \Lambda_Z^*$ by taking the first Chern class modulo torsion.
The image of $c$ equals the set $\mathrm{Char}(\Lambda_Z)$.
Sharpness of $X$ implies that the restriction map from $\spinc(X)$ to $\spinc(Y)$ surjects, from which it follows that two \spinc structures $\spincs_1$ and $\spincs_2$ on $Z$ have the same restriction to $Y$ if and only if $c(\spincs_1) - c(\spincs_2) \in 2\Lambda_X$.  
Thus, we obtain an identification between the set of $\spc$ structures on $Y$ which extend over $Z$ with $\mathrm{Char}(\bZ^n) / 2 \Lambda_X$.

Select any $\spinct \in \spinc(Y)$ which extends over $Z$, and let $\spincs \in \spinc(Z)$ denote an extension.
Then $\spincs|W$ is an extension of $\spinct$ to $W$, so $d(Y,\spinct) = 0$.
Select a sharp extension $\spincs_X$ of $\spinct$ to $X$.
Then $\spincs_X$ and $\spincs|W$ glue to give a new $\spinc$ structure $\spincs' \in \spinc(Z)$ which restricts to $\spinct$ on $Y$, and it satisfies $c_1(\spincs')^2 -n= c_1(\spincs_X)^2-n = 4d(Y,\spinct) = 0$.
Consequently, $c(\spincs') \in \{ \pm 1 \}^n$.
Thus, every coset of $\mathrm{Char}(\zz^n)/2\Lambda_X$ contains a representative in $\Short(\bZ^n) = \{\pm1\}^n$.

Lastly, we have a bijection
\[
\mathrm{Char}(\zz^n)/2\Lambda_X\overset{\sim}\longrightarrow\zz^n/\Lambda_X
\]
given by $[\xi]\mapsto[(\xi-\xi_0)/2]$, with $\xi_0=(-1,\dots,-1)$.
It follows that every coset of $\zz^n/\Lambda_X$ contains a representative in $\{0,1\}^n$.
Therefore, $\Lambda_X$ is cubiquitous by \Cref{prop:cubiq}.
\end{proof}

The following corollary issues immediately from \Cref{thm:GJ} and the example preceding it.

\begin{cor}
\label{cor:altcubiquity}
Let $L$ be a non-split alternating link.
If $\Sigma(L)$ bounds a rational homology ball, then $\Lambda(L)$ is cubiquitous. \qed
\end{cor}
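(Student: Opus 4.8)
The plan is to feed \Cref{thm:GJ} the Gordon--Litherland manifold as its sharp $4$-manifold. First I would record that a non-split alternating link has $\det(L)\neq 0$, so $Y:=\Sigma(L)$ is a rational homology $3$-sphere; by hypothesis it also bounds a rational homology $4$-ball $W$.

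Next I would supply the sharp filling demanded by \Cref{thm:GJ}. Fix a reduced chessboard-colored alternating diagram $D$ of $L$, take the black spanning surface $B\subset S^3$ (the black regions joined by half-twisted bands at the crossings), push its interior into $D^4$, and set $X=\Sigma(B)$, the double cover of $D^4$ branched along $B$. Two cited facts recorded in the Example preceding \Cref{thm:GJ} do all the work: Gordon--Litherland's theorem identifies the intersection lattice of $X$ with $\Lambda(L)$, and \ozsvath--\szabo's theorem says $X$ is sharp.

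With these in hand the corollary is immediate: $Y=\Sigma(L)$ bounds both the sharp manifold $X$ and the rational homology ball $W$, so \Cref{thm:GJ} gives that the intersection lattice of $X$ is cubiquitous, and that lattice is $\Lambda(L)$.

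All of the mathematical content lives upstream --- in \Cref{thm:GJ} (itself built on Donaldson's diagonalization theorem and the $d$-invariant inequality \Cref{eq:d}) and in the two external inputs about $X=\Sigma(B)$ --- so the only thing to verify here is bookkeeping: that the Gordon--Litherland manifold is exactly the kind of sharp $4$-manifold \Cref{thm:GJ} accepts, and that its intersection form is the lattice $\Lambda(L)$ named in the statement. There is no genuine obstacle.
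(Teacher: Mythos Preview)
Your argument is correct and is exactly the paper's own: the corollary is stated with a \qed\ and is said to issue immediately from \Cref{thm:GJ} together with the Gordon--Litherland example preceding it, which supplies the sharp filling $X=\Sigma(B)$ with intersection lattice $\Lambda(L)$.
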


\subsection{Cubiquity and quasi-ribbon cobordisms}
Next, we look to generalize \Cref{thm:GJ} to the case of a quasi-ribbon cobordism.
We begin with a simple and very useful lemma:

\begin{lem}
\label{lem:hereditarysharp}
Suppose that $W : Y_1 \to Y_2$ is a quasi-ribbon cobordism and $X$ is a sharp 4-manifold filling $Y_2$.
Then $W \cup X$ is a sharp 4-manifold filling $Y_1$.
\end{lem}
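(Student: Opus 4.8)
The statement packages three facts about $W \cup X$: it is a rational homology ball-complement of the right type (positive-definite), it fills $Y_1$, and it is sharp. The last is the only substantive point. First I would observe that $W \cup X$ is a smooth compact oriented 4-manifold with boundary $Y_1$, since $W : Y_1 \to Y_2$ and $X$ fills $Y_2$, so the boundaries match up and cancel along $Y_2$. Because $W$ is a rational homology cobordism, a Mayer--Vietoris argument gives $H_2(W \cup X;\bQ) \cong H_2(X;\bQ)$, so $W \cup X$ is positive-definite (indeed its intersection lattice is rationally, hence after checking torsion, isometric to $\Lambda_X$); in particular $b_2^+(W\cup X)=b_2(X)$ and $b_2^-(W\cup X)=0$.

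**Sharpness.** The heart is to show every $\spinc$ structure $\spinct$ on $Y_1$ admits a sharp extension to $W \cup X$. Fix $\spinct \in \spinc(Y_1)$. The quasi-ribbon hypothesis says $H^2(W;\bZ) \to H^2(Y_1;\bZ)$ surjects; this is exactly the statement that makes the restriction $\spinc(W) \to \spinc(Y_1)$ surjective, so choose $\spincr \in \spinc(W)$ restricting to $\spinct$. Let $\spinct' \in \spinc(Y_2)$ be the restriction of $\spincr$ to the other boundary component $Y_2$. Since $X$ is sharp, pick a sharp extension $\spincs \in \spinc(X)$ of $\spinct'$. Now $\spincr$ and $\spincs$ agree on $Y_2$, so they glue to a $\spinc$ structure on $W \cup X$ restricting to $\spinct$ on $Y_1$. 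To compute $c_1^2$ of this glued structure I would use that $W$ is a rational homology cobordism: the inclusion $H_2(X;\bQ) \to H_2(W\cup X;\bQ)$ is an isomorphism, and under it the intersection form and the evaluation of $c_1$ are preserved, so $c_1(\spincr \cup \spincs)^2 = c_1(\spincs)^2$ (computed in $\Lambda_X^* \otimes \bQ$, using that $c_1$ of the restriction of a $\spinc$ structure to $W$ is $\bQ$-torsion). Combined with $b_2(W\cup X) = b_2(X)$ and the sharpness equality $c_1(\spincs)^2 - b_2(X) = 4\,d(Y_2,\spinct')$, I get
\[
\frac{c_1(\spincr \cup \spincs)^2 - b_2(W\cup X)}{4} = \frac{c_1(\spincs)^2 - b_2(X)}{4} = d(Y_2,\spinct').
\]
It remains to relate $d(Y_2,\spinct')$ to $d(Y_1,\spinct)$. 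Here I would invoke the standard monotonicity of the $d$-invariant under negative-definite (equivalently, the reversed positive-definite) rational homology cobordisms, or more cleanly the fact that a rational homology cobordism with $b_2=0$ preserves $d$ up to the appropriate inequality; since $W$ is a rational homology cobordism, $d(Y_1,\spinct) = d(Y_2,\spinct')$, or at least the inequality needed points the right way. Feeding this back shows the glued $\spinc$ structure attains equality in \eqref{eq:d} for $W \cup X$ at $Y_1$, i.e. it is a sharp extension. As $\spinct$ was arbitrary, $W \cup X$ is sharp.

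**The main obstacle.** The delicate point is the $\spinc$-theoretic bookkeeping over the cobordism $W$: making precise that (i) quasi-ribbonness yields surjectivity of $\spinc(W) \to \spinc(Y_1)$, and (ii) the gluing formula for $c_1^2$ behaves as if $W$ contributed nothing, i.e. that $H_2(W;\bQ) \to H_2(W\cup X;\bQ)$ has image in the kernel of the intersection form and $c_1$ of any extension over $W$ is $\bQ$-torsion. Both reduce to $W$ being a rational homology cobordism, but the cleanest route is a careful Mayer--Vietoris computation over $\bZ$ and $\bQ$ together with the long exact sequence of the pair $(W, \partial W)$, tracking how characteristic covectors restrict. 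The $d$-invariant monotonicity for rational homology cobordisms is then a black-box input; I would cite the relevant property of the correction terms rather than reprove it. Everything else is formal manipulation of the two displayed properties of $d$ recalled at the start of \Cref{sec:cubiquity}.
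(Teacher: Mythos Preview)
Your proposal is correct and follows essentially the same route as the paper: use surjectivity of $\Spc(W)\to\Spc(Y_1)$ from the quasi-ribbon hypothesis to push $\spinct$ across $W$, take a sharp extension over $X$, glue, and invoke that a rational homology cobordism preserves $d$-invariants and contributes nothing to $b_2$ or $c_1^2$. The paper's version is terser---it simply asserts $c_1(\spincs_W\cup\spincs_X)^2 - b_2(W\cup X) = c_1(\spincs_X)^2 - b_2(X)$ and $d(Y_1,\spinct_1)=d(Y_2,\spinct_2)$ without the Mayer--Vietoris discussion---but the argument is the same; your hedging about ``the inequality points the right way'' is unnecessary, since equality of $d$-invariants across a rational homology cobordism is the clean statement used.
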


\begin{proof}
First, $W \cup X$ is definite, since $b_2(W) = 0$ and $X$ is definite.
Next, select any $\spinct_1 \in \Spc(Y_1)$.
Since $H^2(W) \to H^2(Y_1)$ surjects, so does $\Spc(W) \to \Spc(Y_1)$.
Hence there exists $\spincs_W \in \Spc(W)$ which restricts to $\spinct_1$.
Let $\spinct_2$ denote the restriction of $\spincs_W$ to $\Spc(Y_2)$.
Then $d(Y_1,\spinct_1) = d(Y_2,\spinct_2)$, since $W$ is a rational homology cobordism.
Choose a sharp extension $\spincs_X \in \Spc(X)$ of $\spinct_2$.
It follows that $\spincs_W \cup \spincs_X \in \Spc(W \cup X)$ restricts to $\spinct_1$ and satisfies $c_1(\spincs_W \cup \spincs_X)^2 - b_2(X \cup W) = c_1(\spincs_X)^2 - b_2(X) = 4 d(Y_2,\spinct_2) = 4 d(Y_1,\spinct_1)$.
Hence $\spincs_W \cup \spincs_X$ is a sharp extension of $\spinct_1$.
Since $\spinct_1$ was arbitrary, it follows that $W \cup X$ is sharp, as desired.
\end{proof}

The next result combines \Cref{lem:hereditarysharp} and \Cref{prop:rigid}.

\begin{lem}
\label{lem:stabilization}
Suppose that $L_1$ and $L_2$ are non-split alternating links, $X_2$ is a sharp 4-manifold filling $\Sigma(L_2)$, and $W : \Sigma(L_1) \to \Sigma(L_2)$ is a quasi-ribbon cobordism.
Then the intersection pairing on $W \cup X_2$ is isometric to a stabilization of $\Lambda(L_1)$.
\end{lem}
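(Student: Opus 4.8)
The plan is to combine the two results immediately preceding the lemma. By \Cref{lem:hereditarysharp}, the manifold $Z := W \cup_{\Sigma(L_2)} X_2$ is a sharp $4$-manifold filling $\Sigma(L_1)$; write $\Lambda_Z$ for its intersection lattice. Sharpness of $Z$ means precisely that the lattice $d$-invariant $(\cX(\Lambda_Z), d_{\Lambda_Z})$ is isomorphic, as a $d$-invariant, to $(\Spc(\Sigma(L_1)), d_{\Sigma(L_1)})$, via the torsor isomorphism $\varphi$ described in the paragraph preceding \Cref{thm:GJ}.

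Next I would bring in the Gordon-Litherland manifold $X_1$ filling $\Sigma(L_1)$ associated to a reduced chessboard-colored alternating diagram $D_1$ of $L_1$, as in the Example preceding \Cref{thm:GJ}. By the results of Gordon and Litherland and of Ozsv\'ath and Szab\'o cited there, $X_1$ is sharp and its intersection lattice is isometric to $\Lambda(L_1) = \flow(G(D_1))$, where $G(D_1)$ denotes the black Tait graph of $D_1$. Consequently the $d$-invariant of $\flow(G(D_1))$ is isomorphic to that of $\Sigma(L_1)$ as well. Composing the two identifications yields an isomorphism of $d$-invariants between $\Lambda_Z$ and $\flow(G(D_1))$.

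To conclude, I would invoke \Cref{prop:rigid} with $\Lambda = \Lambda_Z$ and $G = G(D_1)$. The hypothesis to verify is that $G(D_1)$ is $2$-edge-connected: since $L_1$ is non-split, $D_1$ is connected, so $G(D_1)$ is connected; and since $D_1$ is reduced it has no nugatory crossings, so $G(D_1)$ has no loops and no cut-edges. \Cref{prop:rigid} then gives that $\Lambda_Z = \Lambda_{W \cup X_2}$ is isometric to a stabilization of $\flow(G(D_1)) = \Lambda(L_1)$, which is the assertion of the lemma.

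The step I expect to require the most care is the bookkeeping of orientations and of the torsor and group isomorphisms: one must check that ``sharp filling of $\Sigma(L_1)$'' is used with the same orientation convention in \Cref{lem:hereditarysharp} and in the Gordon-Litherland example, so that $Z$ and $X_1$ genuinely compute the $d$-invariant of the same oriented $3$-manifold, and that the composite of the two torsor isomorphisms covers a group isomorphism $(\Lambda_Z)^{*}/\Lambda_Z \to \flow(G(D_1))^{*}/\flow(G(D_1))$, as \Cref{prop:rigid} requires. Given the setup recalled before \Cref{thm:GJ}, this is routine, but it is the place where an error would most easily slip in.
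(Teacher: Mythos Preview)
Your proof is correct and follows essentially the same route as the paper: apply \Cref{lem:hereditarysharp} to see that $W\cup X_2$ is sharp, compare with the sharp Gordon--Litherland filling to identify the $d$-invariant of $\Lambda_{W\cup X_2}$ with that of $\flow(G(D_1))$, and conclude via \Cref{prop:rigid}. Your explicit verification that the Tait graph of a reduced diagram is $2$-edge-connected is a detail the paper leaves implicit, and your caveat about orientation and torsor bookkeeping is well-placed but, as you say, routine.
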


\begin{proof}
By \Cref{lem:hereditarysharp}, the 4-manifold $W \cup X_2$ is a sharp filling of $\Sigma(L_1)$.
Consequently, its $d$-invariant is isomorphic to the $d$-invariant of $\Sigma(L_1)$, which is in turn isomorphic to the $d$-invariant of $\Lambda(L_1)$, by the example preceding \Cref{thm:GJ}.
As $\Lambda(L_1) \simeq \Flow(G_1)$, where $G_1$ is the Tait graph of a reduced alternating diagram of $L_1$, the result  follows from \Cref{prop:rigid}.
\end{proof}

\begin{proof}
[Proof of \Cref{thm:ribboncubiquity}]
Let $X_2$ be the Gordon-Litherland manifold filling $\Sigma(L_2)$.
Since $b_1(W) = b_2(W) = 0$, the embedding $X_2 \into W \cup X_2$ induces a finite index embedding of intersection lattices.
By the construction of $X_2$ and \Cref{lem:stabilization}, we obtain a finite index embedding $\Lambda(L_2) \into \Lambda(L_1) \oplus \bZ^n$ for some $n \ge 0$.
We proceed to show that this embedding is cubiquitous.

Abbreviate $\Lambda_1 = \Lambda(L_1)$ and let $\Lambda_2$ denote the image of $\Lambda(L_2)$ under the embedding.
As above, the restriction map $\Spc(W \cup X_2) \to \Spc(\Sigma(L_1))$ surjects, setting up a one-to-one correspondence between $\Spc(\Sigma(L_1))$ and the cosets of $2 (\Lambda_1 \oplus \bZ^n)$ in $\Char(\Lambda_1 \oplus \bZ^n)$.
As in the proof of \Cref{thm:GJ}, there is a one-to-one correspondence between the image of the restriction map $\Spc(W \cup X_2) \to \Spc(\Sigma(L_2))$  and $\Char(\Lambda_1 \oplus \bZ^n) / 2 \Lambda_2$.
Thus, each coset of $2 (\Lambda_1 \oplus \bZ^n)$ in $\Char(\Lambda_1 \oplus \bZ^n)$ decomposes into the disjoint union of $[\Lambda_1 \oplus \bZ^n : \Lambda_2]$ cosets of $2 \Lambda_2$ in $\Char(\Lambda_1 \oplus \bZ^n)$.

Since $X_2$ is sharp, each $\spinct_2 \in \Spc(Y_2)$ in the image of the restriction map from $\Spc(W \cup X_2)$ admits a sharp extension to $X_2$ and hence to $W \cup X_2$.
It follows that each coset of $2 \Lambda_2$ in $\Char(\Lambda_1 \oplus \bZ^n)$ contains an element of $\Short(\Lambda_1 \oplus \bZ^n) = \Short(\Lambda_1) \oplus \Short(\bZ^n)$.

By \Cref{prop:uniqueshort}, we may select some $\chi_0 \in \Short(\Lambda_1)$ which is unique in its coset in $\cX(\Lambda_1)$.
Set $\xi_0 = (-1,\dots,-1) \in \Char(\bZ^n)$.
Since $\chi_0$ is the unique element of $\Short(\Lambda_1)$ in the coset $\chi_0 + 2 \Lambda_1 \in \cX(\Lambda_1)$, the elements of $\Short(\Lambda_1 \oplus \bZ^n)$ in the coset $(\chi_0,\xi_0) + 2 (\Lambda_1 \oplus \bZ^n) \in \cX(\Lambda_1 \oplus \bZ^n)$ are the elements of $\chi_0 + \{\pm1\}^n$.
Consider the cosets of $2 \Lambda_2$ comprising $(\chi_0,\xi_0) + 2 (\Lambda_1 \oplus \bZ^n)$.
As stated above, each one contains an element of $\Short(\Lambda_1 \oplus \bZ^n)$, hence an element of $\chi_0 + \{\pm1 \}^n$.
It follows that each coset of $\Lambda_2$ in $\Lambda_1 \oplus \bZ^n$ contains an element of the form $\frac12[(\chi_0,\xi) - (\chi_0,\xi_0)] = \frac12 (0,\xi-\xi_0)$ with $\xi \in \{\pm1\}^n$.
That is, each coset of $\Lambda_2$ in $\Lambda_1 \oplus \bZ^n$ contains an element in $(0) \oplus \{0,1\}^n$.
Thus, for each $x \in \Lambda_1 \oplus \bZ^n$, the coset $-x + \Lambda_2$ contains an element in $(0) \oplus \{0,1\}^n$, which means that $x + \{0,1\}^n$ contains a point of $\Lambda_2$.
Hence $\Lambda_2$ is a cubiquitous sublattice of $\Lambda_1 \oplus \bZ^n$.
\end{proof}


\section{Examples and a conjecture}
\label{sec:conj}

In this section we highlight  some interesting examples of alternating knots and links with normalized determinant less than one.  In particular, contrasting with both Lisca's results for 2-bridge links and with the result for $\ud=1$ in \Cref{thm:links2}, we have:

\begin{prop}
\label{prop:qhbnonslice}
There exist alternating knots and links which are not $\chi$-slice but whose double branched covers are rationally nullbordant.
\end{prop}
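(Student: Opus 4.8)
The case $\ud(L)=1$ is already settled by \Cref{thm:links2}, where $\chi$-sliceness of $L$ is equivalent to rational nullbordism of $\Sigma(L)$; and since \Cref{thm:GJ1} forces $\ud(L)\le 1$ whenever $\Sigma(L)$ is rationally nullbordant, any example witnessing \Cref{prop:qhbnonslice} must have $\ud(L)<1$. The plan is to write down such examples explicitly, arranging both failures to be transparent with essentially no computation.

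For the non-$\chi$-slice half I would first record an Euler-characteristic remark: if $F\subset D^4$ is properly embedded with $\chi(F)=1$ and no closed components, then each component of $F$ carries a boundary circle, hence has Euler characteristic at most $1$, with equality only for a disk; as these Euler characteristics sum to $1$, some component of $F$ is a disk, and its boundary is a slice knot. Consequently an alternating link each of whose components is a non-slice knot cannot be $\chi$-slice, and it suffices to produce one whose branched double cover bounds a rational homology ball.

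The link I propose: let $T_{2,4}$ be the $(2,4)$-torus link with its standard alternating diagram, and let $L$ be obtained by the diagrammatic connected sum of one component of $T_{2,4}$ with a right-handed trefoil and of the other component with a left-handed trefoil, performed so that the resulting diagram is alternating and consistently chessboard-colored (connected sum of alternating diagrams along a strand preserves both). Then $L$ is a non-split alternating link with $\det(L)=\det(T_{2,4})\det(T_{2,3})^2=4\cdot 9=36\ne 0$ whose two components are the right and left trefoils, so $L$ is not $\chi$-slice by the previous paragraph. Since connected sum of links along a component induces connected sum of branched double covers,
\[
\Sigma(L)=\Sigma(T_{2,4})\,\#\,\Sigma(T_{2,3})\,\#\,\Sigma(\overline{T_{2,3}})=L(4,1)\,\#\,L(3,1)\,\#\,\overline{L(3,1)}.
\]
Here $L(4,1)$ bounds a rational homology ball \cite{lisca}, while $L(3,1)\#\overline{L(3,1)}$ bounds $\big(L(3,1)\setminus B^3\big)\times[0,1]$; their boundary connected sum is a rational homology ball filling $\Sigma(L)$. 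As a sanity check, $\det(L)=36$ is a perfect square, as it must be, and the reduction above then correctly predicts that $\Lambda(L)$ is cubiquitous with $\ud(L)<1$. Replacing the trefoil by an arbitrary non-slice alternating knot and its mirror yields an infinite family.

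For the knot assertion the component-count shortcut is unavailable, since a $\chi$-slice knot is simply a slice knot. But $\det(K)$ is odd, so a rational-ball filling $W$ of $\Sigma(K)$ has $|H_1(W)|$ odd, hence $W$ is spin, hence the spin structure on $\Sigma(K)$ extends over $W$ and its correction term vanishes; the Ozsv\'ath--Szab\'o computation of the spin correction term of the branched double cover of an alternating knot \cite{HFdbc} then forces $\sigma(K)=0$, so all of the standard (classical and Heegaard-Floer) concordance invariants of $K$ vanish. Thus the non-sliceness of such a $K$ must be certified by a higher-order obstruction — failure of the Fox--Milnor condition on $\Delta_K$, a nonvanishing Tristram--Levine signature $\sigma_\omega(K)$ at some $\omega\ne -1$, or a Casson--Gordon invariant — none of which is constrained by $\Sigma(K)$ bounding a rational ball. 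The main obstacle I foresee is pinning down a clean alternating knot that simultaneously admits a rational-ball filling of its (necessarily non-lens-space) branched double cover and carries such an obstruction; the link examples above require nothing beyond the Euler-characteristic count.
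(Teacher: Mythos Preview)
Your link argument is correct and takes a genuinely different, more elementary route than the paper. The paper establishes the proposition via the wheel links $K_n=\overline{(\sigma_1\sigma_2^{-1})^n}$: it cites \cite{simone} (and reproves via \Cref{lem:mutball}, using band moves plus Conway mutations) that $\Sigma(K_n)$ bounds a rational ball for odd $n$, and it imports non-sliceness of $K_7$ (and $K_{11},K_{17},K_{23}$) from \cite{sartori,ammmps}. Your construction instead manufactures an example by connected sum, reducing both halves to one-line facts: $L(4,1)\#L(3,1)\#\overline{L(3,1)}$ visibly bounds a rational ball, while the Euler-characteristic count forces a $\chi$-slice surface to contain a disk component, impossible when every link component is a trefoil. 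This is cleaner for links and requires no outside non-sliceness computations; the paper's approach, by contrast, produces prime examples and ties into the broader theme of band moves and mutations.

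The genuine gap is the knot case. The proposition asserts the existence of alternating \emph{knots} with this property, and the paper's examples are in fact knots: the wheel links $K_n$ for odd $n$ are closures of $3$-braids whose underlying permutation is a $3$-cycle, hence have one component, and the paper separately exhibits $12a360$ and $12a1237$ with explicit Kirby diagrams for the rational balls and non-sliceness certified by Fox--Milnor and Levine--Tristram signatures (\Cref{ex:12cr}). You correctly identify that any such knot must have $\sigma(K)=0$ and that one needs exactly the kind of ``higher-order'' obstruction you list, but you stop short of naming an example or verifying one. Your connected-sum trick cannot be adapted here: a knot has a single component, so the disk-component argument collapses to the tautology that a $\chi$-slice knot is slice, and any non-sliceness must be checked directly. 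To complete the proof you need to either supply such a knot (the paper's $12a360$ is the smallest) or else argue that the proposition is already satisfied by multi-component links alone, which is not the intended reading.
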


\Cref{prop:qhbnonslice} has been established by \cite{ammmps,sartori,simone}: letting $K_n$ be the closure of the 3-braid $(\sigma_1\sigma_2^{-1})^n$, it is shown in \cite{simone} that $\Sigma(K_n)$ bounds a rational homology ball for odd $n$.  Sartori showed in \cite{sartori} that $K_7$ is nonslice and Aceto-Meier-Miller-Miller-Park-Stipsicz generalised this in \cite{ammmps}, showing that $K_n$ is nonslice for $n\in\{7,11,17,23\}$.  We refer to $K_n$ as a \emph{wheel link}, as its black graph is a wheel graph (a cycle together with an additional vertex with an edge to each vertex in the cycle).

We give a different proof that the double branched covers of odd wheel links bound rational homology balls.  This is based on Figure \ref{fig:wheels} and the following lemma.  We refer the reader to \cite{mut} for details on Conway mutation.

\begin{figure}
\includegraphics[width=\textwidth]{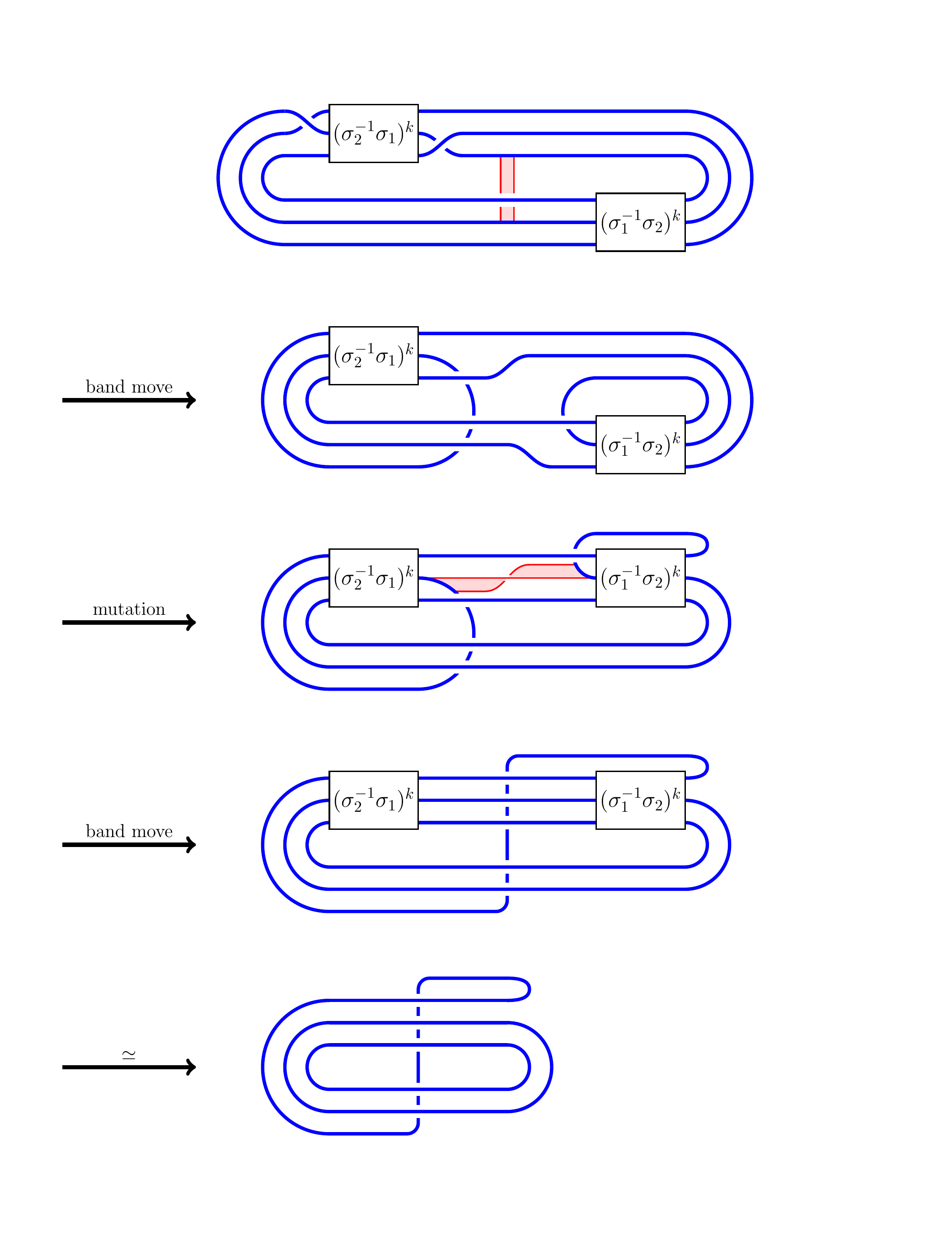}
\caption{
\label{fig:wheels}
{\bf The double branched cover of the wheel link $K_n=\overline{(\sigma_1\sigma_2^{-1})^n}$, for odd $n$, bounds a ribbon rational homology ball.} 
}
\end{figure}

\begin{lem}
\label{lem:mutball}
Let $L$ be a  link in $S^3$ with nonzero determinant, so that its double branched cover $\Sigma(L)$ is a rational homology sphere.  Suppose that there exists a finite sequence consisting of $n$ band moves and some number of Conway mutations, in any order, which converts $L$ to the $(n+1)$-component unlink.  Then $\Sigma(L)$ bounds a ribbon rational homology 4-ball.
\end{lem}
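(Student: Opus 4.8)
The plan is to reverse the given sequence and read it as a recipe for a handle decomposition of a rational homology ball bounded by $\Sigma(L)$. Write the reversed sequence as
\[
U_{n+1}=L^{(0)}\to L^{(1)}\to\cdots\to L^{(k)}=L,
\]
where each arrow is either a band move or a Conway mutation and exactly $n$ of the arrows are band moves. Two standard facts drive the construction. First, a Conway mutation preserves the double branched cover, so a mutation arrow $L^{(j)}\to L^{(j+1)}$ is accompanied by an orientation-preserving diffeomorphism $\Sigma(L^{(j)})\cong\Sigma(L^{(j+1)})$; see \cite{mut}. Second, by the Montesinos trick a band move $L^{(j)}\to L^{(j+1)}$ is realized on double branched covers by an integral Dehn surgery on a knot in $\Sigma(L^{(j)})$, so the double branched cover of the associated saddle cobordism is a cobordism $\Sigma(L^{(j)})\to\Sigma(L^{(j+1)})$ built from $\Sigma(L^{(j)})\times I$ by attaching a single $2$-handle.

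With these in hand I would build a $4$-manifold $Z$ by induction on $j$. Begin with $Z_0=\natural^{n}(S^1\times B^3)$, which has boundary $\#^{n}(S^1\times S^2)=\Sigma(U_{n+1})$ and a handle decomposition with one $0$-handle and $n$ $1$-handles. Given $Z_j$ with $\partial Z_j=\Sigma(L^{(j)})$: if the $j$-th arrow is a mutation, keep $Z_{j+1}=Z_j$ and re-identify the boundary with $\Sigma(L^{(j+1)})$ via the diffeomorphism above; if it is a band move, let $Z_{j+1}$ be $Z_j$ with the $2$-handle from the Montesinos trick attached, so that $\partial Z_{j+1}=\Sigma(L^{(j+1)})$. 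After all $k$ steps, $Z:=Z_k$ satisfies $\partial Z=\Sigma(L)$ and has a handle decomposition with a single $0$-handle, $n$ $1$-handles, and $n$ $2$-handles, and no handles of index $3$ or $4$. Provided $Z$ is a rational homology ball, this is precisely the assertion that $Z$ is a ribbon rational homology $4$-ball filling $\Sigma(L)$.

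It remains to check that $Z$ is a rational homology ball, and here I would argue homologically. The handle chain complex of $Z$, with the $0$-handle as basepoint, has the form $0\to C_2\xrightarrow{A}C_1\to C_0\to 0$ with $C_0\cong\bZ$, $C_1\cong C_2\cong\bZ^{n}$, and $\del_1=0$, so $H_1(Z)=\cok A$, $H_2(Z)=\ker A$, and $H_i(Z)=0$ for $i\ge 3$. By Lefschetz duality and the universal coefficient theorem, $H_1(Z,\partial Z)\cong H^3(Z)\cong\mathrm{Ext}(H_2(Z),\bZ)=0$, since $\ker A$ is a free abelian group. The long exact sequence of $(Z,\partial Z)$ then exhibits $\cok A=H_1(Z)$ as a quotient of $H_1(\partial Z)=H_1(\Sigma(L))$, which is finite of order $|\det(L)|\ne 0$. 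Hence $A$ is nonsingular over $\bQ$ (in particular $\ker A=0$), so $H_*(Z;\bQ)\cong H_*(\mathrm{pt};\bQ)$: $Z$ is a rational homology ball, and therefore a ribbon rational homology $4$-ball filling $\Sigma(L)$, as desired.

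The inductive construction and the final homology count are routine; the step I expect to need the most care is the second standard fact above, namely that a single band move lifts to the trace of an \emph{integral} surgery, i.e.\ a cobordism built from exactly one $2$-handle and no handles of higher index. I would justify this by the usual tangle argument: a band move replaces one trivial tangle by the adjacent trivial tangle in a ball whose branched double cover is a solid torus, so on $\Sigma$ it reglues that solid torus to its complement along a slope at distance one from the original meridian, which is an integral Dehn surgery. The bookkeeping that the number of $2$-handles matches the number of $1$-handles needs no separate argument: there are $n$ band moves, and $\Sigma(U_{n+1})=\#^{n}(S^1\times S^2)$ bounds $\natural^{n}(S^1\times B^3)$, which is built from $n$ $1$-handles on a single $0$-handle. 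Finally, one should observe that the intermediate $\Sigma(L^{(j)})$ need not be rational homology spheres, which poses no obstacle, as the hypothesis $\det(L)\ne 0$ is invoked only at the very end.
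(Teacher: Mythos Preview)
Your proof is correct and follows essentially the same route as the paper's: build the filling from a $1$-handlebody bounding $\Sigma(U_{n+1})$, attach one $2$-handle per band move (the paper phrases this as the branched double cover of a saddle cobordism rather than via the Montesinos trick, but these are the same), absorb mutations via the diffeomorphism $\Sigma(L^{(j)})\cong\Sigma(L^{(j+1)})$, and then use the long exact sequence of the pair together with finiteness of $H_1(\Sigma(L))$ to conclude $b_1=b_2=0$. The only cosmetic difference is the handle count---the paper records two $0$-handles and $(n+1)$ $1$-handles where you have one $0$-handle and $n$ $1$-handles---but these differ by a cancelling $0$/$1$ pair, and your homological argument is spelled out in more detail than the paper's one-line appeal to the long exact sequence.
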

\begin{proof}  The proof is essentially the same as that given by Lisca for the case without Conway mutations \cite[Proof of Theorem 1.2]{lisca}.  Conway mutation does not change the diffeomorphism type of the double branched cover, so by taking double branched covers of the minima bounded by the unlink and of the saddle cobordisms corresponding to the band moves, we obtain a connected smooth 4-manifold $W$ bounded by $\Sigma(L)$ which has a handlebody decomposition with two 0-handles, $(n+1)$ 1-handles, and $n$ 2-handles.  Using the long exact sequence of the pair $(W,\partial W)$ we obtain $b_1(W)=b_2(W)=0$.
\end{proof}

\begin{ques} 
\label{ques:mut}
Does every ribbon rational homology ball bounded by the double branched cover $\Sigma(L)$ of an alternating link arise as in \Cref{lem:mutball}?
\end{ques}

We suspect that the answer to \Cref{ques:mut} is no, based on the following knots for which we have not succeeded in finding a sequence of band moves and mutations as in \Cref{lem:mutball}.

\begin{example}
\label{ex:12cr}
The 12-crossing knots $12a360$ and $12a1237$ are nonslice, but their double branched covers bound rational homology balls.
\end{example}

Nonsliceness of these knots is recorded in \cite{knotinfo} and may be verified using the Fox-Milnor obstruction and Levine-Tristram signatures. The rational balls for \Cref{ex:12cr} are exhibited by the Kirby diagrams shown in Figure \ref{fig:12aQHB}.  In each case the diagram yields an embedding of the Gordon-Litherland manifold $X=\Sigma(B)$, the double branched cover of the 4-ball branched along the black surface of an alternating diagram, into a connected sum of $b_2(X)$ copies of $\C\P^2$.  The complement of $X$ in
$\#_{b_2(X)}\C\P^2$ is then the required rational homology ball.  In each case this complement has a handle decomposition relative to its boundary with handles of index at least 2; turning this upside-down we see that we have a ribbon rational ball.  The method for drawing a Kirby diagram of the Gordon-Litherland manifold is described in \cite[Lecture 2]{wb}, and is based on a description given in \cite{HFdbc}.

\begin{figure}
\includegraphics[scale=0.4]{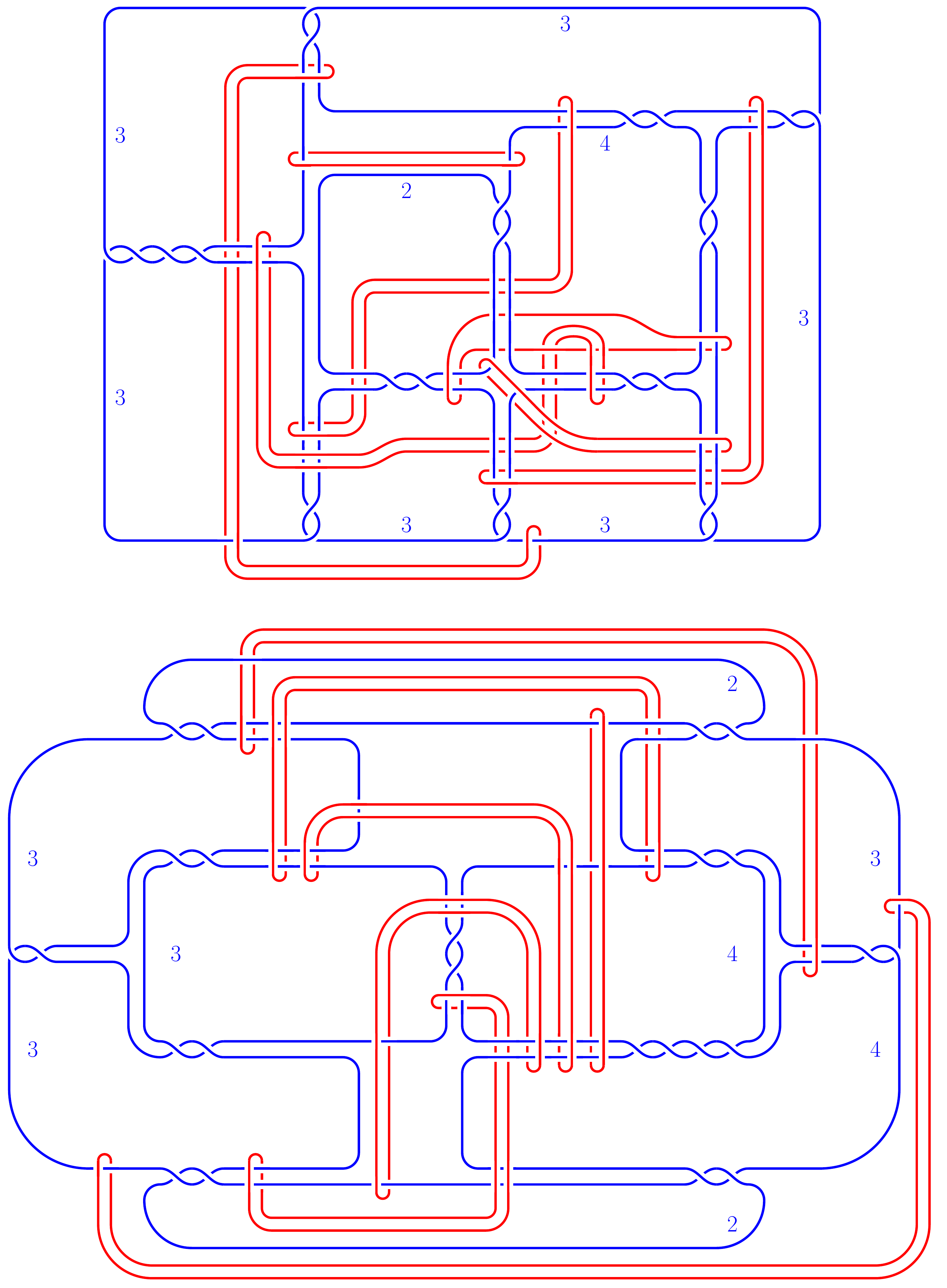}
\caption{
\label{fig:12aQHB}
{\bf Embeddings of manifolds $X\hookrightarrow \#_7\C\P^2$ exhibiting ribbon rational homology balls bounded by $\Sigma(12aN)$, for $N\in\{360,1237\}$.} In each case the manifold $X$ is given by the blue 2-handles and a 3-handle, and is the double cover of the 4-ball branched over the black surface of $12aN$.  Each red 2-handle attaching circle is 1-framed.  Blowing these down results in an 8-component 0-framed unlink, from which it follows that the manifold given by attaching the red 2-handles, seven further 3-handles, and a 4-handle to $X$ is $\#_7\C\P^2$.
}
\end{figure}

The diagrams in Figure \ref{fig:12aQHB} were found by a thus-far ad hoc procedure, which has also been applied successfully to the wheel links mentioned above, and which is based on the notion of alternating links and lattice embeddings being \emph{expanded from the unknot}.  This in turn is based on a generalisation of the expansions used by Lisca in \cite{lisca}, which we now describe.

We first briefly consider the effect of edge deletions and contractions on the flow lattice of a graph.  Contracting a single edge leaves the rank of the flow lattice unchanged, and the pairing of two cycles is unchanged unless both contain the contracted edge, in which case it is changed by $\pm1$.  Deletion of a non-cut edge reduces the rank of the flow lattice by 1; the new flow lattice is a sublattice of the original, with quotient generated by any cycle containing the deleted edge.  Thus if $G'$ is obtained from $G$ by contracting some edges and deleting some non-cut edges of $G$, then there is an induced group monomorphism 
$\Phi:\flow(G')\to\flow(G)$.

If $D$ is an alternating link diagram and $G$ is its black Tait graph, then smoothing of crossings in $D$ corresponds to contraction and deletion of edges in $G$: the edge corresponding to a crossing is contracted if the crossing is smoothed so as to join two black regions in the diagram, and it is deleted if two white regions are joined.

Suppose now that $L_i$ are nonsplit alternating links, with alternating diagrams $D_i$ for $i=1,2$,  and suppose that there exist embeddings $\phi_i:\Lambda(L_i)\hookrightarrow\zz^{n_i}$, where $n_i$ is the rank of the black lattice $\Lambda(L_i)$.
We say that the pair $(D_2,\phi_2)$ is obtained by an  expansion from $(D_1,\phi_1)$ if
\begin{enumerate}[(i)]
\item $D_1$ is obtained from $D_2$ by smoothing two crossings, so that one smoothing joins two white regions and the other joins two black regions and 
\item the following diagram of group homomorphisms commutes, where $\pi$ is projection orthogonal to a unit vector:
\begin{center}
\begin{tikzcd}
\Lambda_1 \arrow[hookrightarrow]{r}{\phi_1} \arrow{d}{\Phi} 
  & \zz^{n_1}\\
\Lambda_2 \arrow[hookrightarrow]{r}{\phi_2}
  & \zz^{n_2=n_1+1}\arrow{u}{\pi} .
\end{tikzcd}
\end{center}
\end{enumerate}
We say that $(D',\phi')$ is \emph{expanded from} $(D,\phi)$ if there is a sequence 
$$(D',\phi')=(D_k,\phi_k),(D_{k-1},\phi_{k-1}),\dots,(D_1,\phi_1)=(D,\phi)$$
with  $(D_{j+1},\phi_{j+1})$ obtained by an expansion from $(D_j,\phi_j)$ for each $j=1,\dots,k-1$.
Finally we say that an alternating link $L$ is \emph{expanded from the unknot} if there is a finite-index embedding $\phi:\Lambda(L)\hookrightarrow\zz^n$, an alternating diagram $D$ of $L$, and an alternating diagram $D_0$ of the unknot, such that
$(D,\phi)$ is expanded from $(D_0,\Id)$, noting that the black lattice of an alternating diagram of the unknot is Euclidean.

We have the following generalization of \Cref{conj:main}.

\begin{conj}
\label{conj:exp}
Let $L$ be a nonsplit alternating link with alternating diagram $D$.  The following are equivalent:
\begin{enumerate}[(1)]
\item $\Sigma(L)$ is rationally nullbordant;
\item $\Lambda(L)$ is cubiquitous;
\item $L$ is expanded from the unknot.
\end{enumerate}
\end{conj}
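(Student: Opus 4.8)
The plan is to prove the cycle (1)$\implies$(2)$\implies$(3)$\implies$(1), with the caveat that, because \Cref{conj:exp} strengthens the open \Cref{conj:main}, what follows is a roadmap whose central step is at least as hard as \Cref{conj:main} itself. The implication (1)$\implies$(2) is already available: it is \Cref{cor:altcubiquity}, obtained by feeding the Gordon--Litherland manifold of a reduced alternating diagram of $L$ into \Cref{thm:GJ}.

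For (3)$\implies$(2) I would show that cubiquity propagates along expansions. The base case $(D_0,\Id)$ has Euclidean black lattice, which is trivially cubiquitous. For the inductive step, let $(D_2,\phi_2)$ be obtained from $(D_1,\phi_1)$ by an expansion, with $\phi_1(\Lambda_1)\subset\zz^{n_1}$ cubiquitous, $\pi:\zz^{n_1+1}\to\zz^{n_1}$ the stated orthogonal projection, and $\Phi:\Lambda_1\to\Lambda_2$ the monomorphism induced by contracting one edge and deleting one non-cut edge of $G(D_2)$. Given a unit cube $x+\{0,1\}^{n_1+1}$, its image under $\pi$ contains some $\phi_1(\Phi(w))=\pi(\phi_2(\Phi(w)))$; using the description in \Cref{sec:conj} of how edge deletion embeds $\flow(G(D_1))$ in $\flow(G(D_2))$ with cyclic quotient generated by a cycle through the deleted edge, one should be able to correct the single coordinate created by the expansion by adding an element of $\phi_2(\Lambda_2)\cap\ker\pi$ so that the adjusted lattice point lands in the original cube. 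I expect this to reduce to a short check once the precise form of $\phi_2$ relative to $\phi_1$ is recorded, though some care is needed.

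The substantive step is (2)$\implies$(3): given a cubiquitous (hence finite-index) embedding $\phi:\Lambda(L)\hookrightarrow\zz^n$, one must produce an alternating diagram $D$ of $L$, an unknot diagram $D_0$, and an expansion sequence from $(D_0,\Id)$ to $(D,\phi)$. This is the Lisca-type recursion, generalizing the one behind \Cref{thm:links2}. The plan would be to locate in $\zz^n$ a short, Voronoi-type vector adapted to $\phi(\Lambda(L))$ whose geometry forces, in some diagram $D$ of $L$, a pair of crossings --- one joining two white regions, one joining two black regions --- whose simultaneous smoothing yields $(D_1,\phi_1)$ with $\phi_1(\Lambda_1)\subset\zz^{n-1}$ still cubiquitous; iterating then terminates at an unknot diagram. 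This is the main obstacle. In the $\ud=1$ setting, \hajos's theorem supplied an irreducible square-$4$ vector, which \Cref{cor:IImut} upgraded to a decomposing cycle; with only cubiquity in hand there is no such rigidity, and extracting \emph{the next expansion to undo} appears to require genuinely new combinatorics. Indeed (2)$\implies$(3), combined with (3)$\implies$(1), implies \Cref{conj:main}, so no clean argument should be expected here without substantial new input.

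Finally, (3)$\implies$(1) should be the four-dimensional incarnation of an expansion: the base case is $\Sigma(\bigcirc)=S^3$, which bounds $B^4$, and I would show that each expansion step lifts to a move on the filling which changes it by handles of index at most $2$ while preserving $b_1=b_2=0$, so that running the expansion sequence recorded in (2)$\implies$(3) builds a ribbon rational homology ball filling $\Sigma(L)$, with ribbonness visible by turning the $4$-manifold upside down. Concretely, one expects this move to be a simultaneous modification of an embedding $X\hookrightarrow\#_n\C\P^2$ of the Gordon--Litherland manifold that adjoins one $\C\P^2$ summand and keeps the complement a rational homology ball. This is exactly the mechanism used, so far ad hoc, to produce the balls for the nonslice knots $12a360$, $12a1237$ and the odd wheel links, so making it systematic --- checking at each expansion that the homological and handle-index conditions survive --- is a plausible though technical undertaking. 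In sum: (1)$\implies$(2) is done, (3)$\implies$(2) and (3)$\implies$(1) look tractable, and (2)$\implies$(3) is the open heart of the conjecture.
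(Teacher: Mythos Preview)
The statement is a \emph{conjecture}; the paper does not prove it, and you correctly present a roadmap rather than a proof. Your framing is compatible with the paper's own discussion following the conjecture: both agree that (1)$\implies$(2) is \Cref{cor:altcubiquity}, both identify (2)$\implies$(3) (equivalently (2)$\implies$(1), i.e.\ \Cref{conj:main}) as the open core, and both suggest that (3)$\implies$(1) should proceed by recursively building a rational ball one expansion at a time --- the paper via Kirby diagrams as in \Cref{fig:12aQHB}, you via the equivalent complement-of-$X$-in-$\#\C\P^2$ picture.

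One difference worth flagging: the paper does not attempt a direct (3)$\implies$(2), and your sketch for it has a gap. From the commutative square you only know that $\phi_2(\Phi(w))$ lands in the correct \emph{fibre} of $\pi$ over a point of the projected cube; correcting its last coordinate into the window $\{x_{n_1+1},x_{n_1+1}+1\}$ requires $\phi_2(\Lambda_2)\cap\ker\pi=\bZ e_{n_1+1}$, which the definition of expansion does not force (it only forces this intersection to have finite index). You would need either an additional hypothesis on $\phi_2$ or a different choice of starting point in $\Lambda_2$, not just in $\Phi(\Lambda_1)$. That said, (3)$\implies$(2) is redundant in your cycle anyway, since it would follow from (3)$\implies$(1) together with (1)$\implies$(2), so this gap does not affect the overall logic of the roadmap.
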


The implication (1)$\implies$(3) of this conjecture is true for 2-bridge links \cite{lisca}, and also for alternating links with $\ud=1$.  The latter follows from the proof of \Cref{thm:links2}.  The equivalence of (1) and (3) has  been verified  for odd wheel links, and for prime alternating knots with 12 or fewer crossings, including $12a360$ and $12a1237$.  As before, we have (1)$\implies$(2) by the work of Greene and Jabuka, \Cref{thm:GJ}. The implication (2)$\implies$(1) is also known to be true for 2-bridge links, though this is not written down in full \cite{noteond,lisca}.  With care one can sometimes recursively build up Kirby diagrams as in Figure \ref{fig:12aQHB}   by working one expansion at a time.  One might hope that this approach could lead to a proof of (3)$\implies$(1) in \Cref{conj:exp}.


\clearpage

\bibliographystyle{amsplain}
\bibliography{tiling}

\providecommand{\bysame}{\leavevmode\hbox to3em{\hrulefill}\thinspace}
\providecommand{\MR}{\relax\ifhmode\unskip\space\fi MR }
\providecommand{\MRhref}[2]{%
  \href{http://www.ams.org/mathscinet-getitem?mr=#1}{#2}
}
\providecommand{\href}[2]{#2}
\begin{thebibliography}{10}

\bibitem{ammmps}
Paolo Aceto, Jeffrey Meier, Allison~N. Miller, Maggie Miller, JungHwan Park,
  and Andr\'{a}s~I. Stipsicz, \emph{Branched covers bounding rational homology
  balls}, Algebr. Geom. Topol. \textbf{21} (2021), no.~7, 3569--3599.

\bibitem{bdlhn}
Roland Bacher, Pierre de~la Harpe, and Tatiana Nagnibeda, \emph{The lattice of
  integral flows and the lattice of integral cuts on a finite graph}, Bull.
  Soc. Math. France \textbf{125} (1997), no.~2, 167--198. \MR{1478029}

\bibitem{CS}
J.~H. Conway and N.~J.~A. Sloane, \emph{Low-dimensional lattices. {VI}.
  {V}orono\u{\i} reduction of three-dimensional lattices}, Proc. Roy. Soc.
  London Ser. A \textbf{436} (1992), no.~1896, 55--68.

\bibitem{dlvvw}
Aliakbar Daemi, Tye Lidman, David~Shea Vela-Vick, and C.-M.~Michael Wong,
  \emph{Ribbon homology cobordisms}, Adv. Math. \textbf{408} (2022), Paper No.
  108580.

\bibitem{DO}
A.~Donald and B.~Owens, \emph{Concordance groups of links}, Algebr. Geom.
  Topol. \textbf{12} (2012), no.~4, 2069--2093.

\bibitem{thmA}
S.~K. Donaldson, \emph{The orientation of {Y}ang-{M}ills moduli spaces and
  {$4$}-manifold topology}, J. Differential Geom. \textbf{26} (1987), no.~3,
  397--428.

\bibitem{GR}
Chris Godsil and Gordon Royle, \emph{Algebraic graph theory}, Graduate Texts in
  Mathematics, vol. 207, Springer-Verlag, New York, 2001.

\bibitem{gs}
Robert~E. Gompf and Andr{\'a}s~I. Stipsicz, \emph{{$4$}-manifolds and {K}irby
  calculus}, Graduate Studies in Mathematics, vol.~20, American Mathematical
  Society, Providence, RI, 1999.

\bibitem{gordon}
C.~McA. Gordon, \emph{Ribbon concordance of knots in the 3-sphere},
  Mathematische Annalen \textbf{257} (1981), 157--170.

\bibitem{GL}
C.~McA. Gordon and R.~A. Litherland, \emph{On the signature of a link}, Invent.
  Math. \textbf{47} (1978), no.~1, 53--69.

\bibitem{GJ}
Joshua Greene and Stanislav Jabuka, \emph{The slice-ribbon conjecture for
  3-stranded pretzel knots}, Amer. J. Math. \textbf{133} (2011), no.~3,
  555--580.

\bibitem{mut}
Joshua~Evan Greene, \emph{Lattices, graphs, and {C}onway mutation}, Invent.
  Math. \textbf{192} (2013), no.~3, 717--750.

\bibitem{lens}
Joshua~Evan Greene, \emph{The lens space realization problem}, Annals of
  Mathematics \textbf{177} (2013), no.~2, 449--511.

\bibitem{noteond}
Joshua~Evan Greene, \emph{A note on applications of the {$d$}-invariant and
  {D}onaldson's theorem}, J. Knot Theory Ramifications \textbf{26} (2017),
  no.~2, 1740006, 8. \MR{3604488}

\bibitem{Hajos}
G.~Haj\'os, \emph{{\"U}ber einfache und mehrfache {B}edeckung des
  $n$-dimensionalen {R}aumes mit einem {W}\"urfelgitter}, Mathematische
  Zeitschrift \textbf{47} (1942), 427--467.

\bibitem{hr}
Craig Hodgson and J.H. Rubinstein, \emph{Involutions and isotopies of lens
  spaces}, Knot Theory and Manifolds, Lecture Notes in Mathematics, vol. 1144,
  Springer, 1985, pp.~60--96.

\bibitem{huber}
Marius Huber, \emph{Ribbon cobordisms between lens spaces}, Pacific J. Math.
  \textbf{315} (2021), no.~1, 111--128.

\bibitem{huber:thesis}
\bysame, \emph{Ribbon cobordisms}, Ph.D. thesis, Boston College, 2022.

\bibitem{kauffman}
Louis~H. Kauffman, \emph{State models and the {J}ones polynomial}, Topology
  \textbf{26} (1987), no.~3, 395--407.

\bibitem{kirby}
Rob Kirby, \emph{Problems in low dimensional manifold theory}, Algebraic and
  geometric topology ({P}roc. {S}ympos. {P}ure {M}ath., {S}tanford {U}niv.,
  {S}tanford, {C}alif., 1976), {P}art 2, Proc. Sympos. Pure Math., XXXII, Amer.
  Math. Soc., Providence, R.I., 1978, pp.~273--312.

\bibitem{lisca}
Paolo Lisca, \emph{Lens spaces, rational balls and the ribbon conjecture},
  Geom. Topol. \textbf{11} (2007), 429--472.

\bibitem{knotinfo}
Charles Livingston and Allison~H. Moore, \emph{Knotinfo: Table of knot
  invariants}, URL: \url{knotinfo.math.indiana.edu}.

\bibitem{mccoy}
Duncan McCoy, \emph{Alternating knots with unknotting number one}, Adv. Math.
  \textbf{305} (2017), 757--802.

\bibitem{MT}
William Menasco and Morwen~B. Thistlethwaite, \emph{The classification of
  alternating links}, Annals of Mathematics \textbf{138} (1993), no.~1,
  113--171.

\bibitem{mh}
John Milnor and Dale Husemoller, \emph{Symmetric bilinear forms}, Ergebnisse
  der Mathematik und ihrer Grenzgebiete, Band 73, Springer-Verlag, New
  York-Heidelberg, 1973.

\bibitem{Mink}
Hermann Minkowski, \emph{Geometrie der {Z}ahlen}, Teubner, Leipzig, 1896.

\bibitem{murasugi}
Kunio Murasugi, \emph{Jones polynomials and classical conjectures in knot
  theory}, Topology \textbf{26} (1987), no.~2, 187--194.

\bibitem{wb}
Brendan Owens, \emph{Knots and 4-manifolds}, Winter Braids Lect. Notes
  \textbf{6} (2019), no.~Winter Braids IX, Exp. No. II, 26.

\bibitem{GLslice}
Brendan Owens and Sa\v{s}o Strle, \emph{Disoriented homology and double
  branched covers}, arXiv:2207.09358, 2022.

\bibitem{absgr}
Peter Ozsv\'{a}th and Zolt\'{a}n Szab\'{o}, \emph{Absolutely graded {F}loer
  homologies and intersection forms for four-manifolds with boundary}, Adv.
  Math. \textbf{173} (2003), no.~2, 179--261.

\bibitem{HFdbc}
\bysame, \emph{On the {H}eegaard {F}loer homology of branched double-covers},
  Adv. Math. \textbf{194} (2005), no.~1, 1--33.

\bibitem{sartori}
A~Sartori, \emph{Knot concordance in three-manifolds}, Master's thesis,
  University of Pisa, 2010.

\bibitem{simone}
Jonathan Simone, \emph{Classification of torus bundles that bound rational
  homology circles}, arXiv:2006.14986v2, to appear in Algebraic \& Geometric
  Topology, 2022.

\bibitem{SS}
Sherman~K. Stein and S\'{a}ndor Szab\'{o}, \emph{Algebra and tiling}, Carus
  Mathematical Monographs, vol.~25, Mathematical Association of America,
  Washington, DC, 1994.

\bibitem{thistle}
Morwen~B. Thistlethwaite, \emph{A spanning tree expansion of the {J}ones
  polynomial}, Topology \textbf{26} (1987), no.~3, 297--309.

\bibitem{tutte}
W.~T. Tutte, \emph{A class of {A}belian groups}, Canadian J. Math. \textbf{8}
  (1956), 13--28. \MR{75198}

\bibitem{Zong}
Chuanming Zong, \emph{What is known about unit cubes}, Bulletin of the American
  Mathematical Society \textbf{42} (2005), no.~2, 181--211.

\end{thebibliography}

\end{document}